\DeclareFontFamily{U}{mathx}{\hyphenchar\font45}
\DeclareFontShape{U}{mathx}{m}{n}{
      <5> <6> <7> <8> <9> <10>
      <10.95> <12> <14.4> <17.28> <20.74> <24.88>
      mathx10
      }{}
\DeclareSymbolFont{mathx}{U}{mathx}{m}{n}
\DeclareMathAccent{\widecheck}{0}{mathx}{"71}
\DeclareMathAccent{\wideparen}{0}{mathx}{"75}
\newcommand*\rel@kern[1]{\kern#1\dimexpr\macc@kerna}
\newcommand*\widebar[1]{%
  \begingroup
  \def\mathaccent##1##2{%
    \rel@kern{0.8}%
    \overline{\rel@kern{-0.8}\macc@nucleus\rel@kern{0.2}}%
    \rel@kern{-0.2}%
  }%
  \macc@depth\@ne
  \let\math@bgroup\@empty \let\math@egroup\macc@set@skewchar
  \mathsurround\z@ \frozen@everymath{\mathgroup\macc@group\relax}%
  \macc@set@skewchar\relax
  \let\mathaccentV\macc@nested@a
  \macc@nested@a\relax111{#1}%
  \endgroup
}
\newcommand{\N}{\mathbb{N}}
\newcommand{\Z}{\mathbb{Z}}
\newcommand{\R}{\mathbb{R}}
\newcommand{\Cx}{\mathbb{C}}
\newcommand{\Cinf}{\widebar{\Cx}}
\renewcommand{\Re}{\operatorname{Re}}
\renewcommand{\Im}{\operatorname{Im}}
\newcommand{\vecr}{\operatorname{vec}^{\rm r}}
\newcommand{\vecc}{\operatorname{vec}^{\rm c}}
\newcommand{\id}{{\operatorname{id}}}
\newcommand{\I}{\mathfrak{I}}
\newcommand{\Uni}{\mathfrak{U}}
\renewcommand{\U}{\mathcal{U}}
\renewcommand{\H}{\mathcal{H}}
\newcommand{\Dom}{\textsf{\upshape Dom}}
\renewcommand{\d}{\mathrm{d}}
\newcommand{\ddp}{\mathrm{dp}}
\newcommand{\q}{\mathrm{qc}}
\DeclareMathOperator{\sgn}{sgn}
\newcommand{\e}{\mathrm{e}}
\newcommand{\tr}{\operatorname{trace}}
\newcommand{\NaN}{\mathrm{NaN}}
\newcommand{\E}{\textsf{\upshape E}}
\renewcommand{\P}{\textsf{\upshape P}}
\newcommand{\Qu}{\textsf{\upshape Q}}
\newcommand{\indicator}[1]{\mathbf{1}_{#1}}
\newcommand{\filt}[1]{\mathfrak{#1}}
\newcommand{\sigalg}[1]{\mathscr{#1}}
\newcommand{\Exp}{\ensuremath{\mathscr{E}}}
\newcommand{\Log}{\ensuremath{\mathcal{L}}}
\newcommand{\V}{\ensuremath{\mathscr{V}}}
\newcommand{\lc}{[\![}
\newcommand*{\bigs}[1]{\scalebox{1.2}{\ensuremath#1}}
\DeclarePairedDelimiter\abs{{\mathopen|}}{{\mathclose|}}
\begin{document}

\section{Introduction}\label{sect: intro}
\setlength\epigraphwidth{.8\textwidth}
\setlength\epigraphrule{0pt}
\epigraph{\emph{``Because in mathematics we pile inferences upon inferences, it is a good thing when\-ever we can subsume as many of them as possible under one symbol. For once we have understood the true significance of an operation, just the sen\-si\-ble appre\-hension of its symbol will suffice to obviate the whole reasoning process that earlier we had to engage anew each time the operation was encountered.''}}{--- Carl Jacobi (1804--1851) \cite[p.~67]{remmert.91}}\medskip

We study the following concept. A semimartingale $Y$ is said to be represented by a semimartingale $X$ if, roughly speaking, there is a predictable function $\xi$ acting on the increments of $X$ such that the increments of $Y$ satisfy $\d Y_t = \xi_t (\d X_t)$, where $\xi_t (\d X_t)$ is given some ``natural'' meaning. Such representation of $Y$ in terms of $X$, if it exists, is measure-invariant. One hopes that common operations on $Y$ yield processes that are again $X$--representable, for example,\smallskip \newline
(i) a stochastic integral $\zeta_t\d Y_t$ ``ought to'' yield
\begin{equation}\label{intro1}
 \zeta_t\d Y_t = \zeta_t \xi_t (\d X_t);
\end{equation}
(ii) for a change of variables by means of some smooth function $f$ it should be true that 
\begin{equation}\label{intro2}
\d f(Y_t) = f(Y_{t-}+\xi_t (\d X_t))-f(Y_{t-}); 
\end{equation}
(iii) for a new process $Z$ such that $\d Z_t = \psi_t (\d Y_t)$ one would like to obtain the composition rule
\begin{equation}\label{intro3}
\d Z_t = \psi_t( \xi_t (\d X_t) ).
\end{equation}
In integral form we shall write, e.g., 
$$Y_t = Y_0+\xi\circ X_t = Y_0 + \int_0^t \xi_s(\d X_s).$$

The purpose of the calculus \eqref{intro1}--\eqref{intro3} is to reduce the computational burden in a generic modelling situation where one starts from a (multivariate) process $X$ whose predictable $\P$--cha\-ra\-cte\-ri\-stics relative to some truncation function 
are given as the primitive input to the problem. The process $X$, which is trivially representable with respect to itself, is transformed by several applications of Propositions~\ref{P:integral} and \ref{P:Ito} and Theorem~\ref{T:composition}, i.e., by the rules \eqref{intro1}--\eqref{intro3} above, to another process $Y$ which is also $X$--representable. In many situations the required end product is the $\P$--drift of $Y$, here denoted $B^Y$; 
e.g., when computing  exponential compensators; see Duffie, Filipovi\'c, and Schachermayer~\cite[Proposition 11.2]{duffie.al.03}. A generic formula for the drift of a represented process, $B^{\xi\circ X}$, is given in Proposition~\ref{P:170822.1}.

We begin with a brief description of some historical background of this paper. The first seeds of measure-invariant stochastic calculus were planted by McKean~\cite{mckean.69}, who would write the classical It\^o formula in the form
\begin{equation}\label{eq:200612.2}
\d f(X_t) = f'(X_t)\d X_t +\frac{1}{2}f''(X_t)(\d X_t)^2,
\end{equation}
and only afterwards substitute the canonical decomposition of $X$ in the first term and the quadratic variation of $X$ in the second term on the right-hand side of~\eqref{eq:200612.2}. With the development of general semimartingale integration it soon became clear that \eqref{eq:200612.2} is fully rigorous, as written, for any continuous semimartingale $X$ and any sufficiently smooth function $f$ on an appropriate domain; see Dol\'eans-Dade and Meyer~\cite[Th\'eor\`eme~8]{doleans-dade.meyer.70}.

From here it is not a big conceptual leap to want to study general transformations of the increments $\d X_t$ by means of some predictable function $\xi$. Precisely this was suggested by \'Emery~\cite{emery.78} together with the notation $\d Y_t = \xi(\d X_t)$ and a specific measure-invariant formula for $\xi(\d X_t)$, for time-constant deterministic $\xi$ and matrix(!)-valued $X$ (because \'Emery's goal at the time was to study the natural exponential of a matrix with stochastic coefficients). As far as we know, nobody has up until now attempted to build a coherent calculus based on \'Emery's formula. See also Remark~\ref{R:predictable variation} for other connections to the literature.

Next, let us offer a flavour of the simplifications the calculus can achieve in conjunction with drift calculations. For example, for real-valued $X$ and $\alpha\in\Cx$ the calculus permits one to write 
\begin{equation}\label{eq:210404.1}
\e^{\alpha (X-X_0)} = \Exp((\e^{\alpha\,\id}-1)\circ X), 
\end{equation}
where $\Exp$ is the Dol\'eans-Dade  stochastic exponential (see~\cite{doleans-dade.70}).
If $X$ has independent increments and the expectation is finite, one then obtains (see \v{C}ern\'{y} and Ruf~\cite[Theorem~4.1]{crIII})
\begin{equation}\label{eq:210404.2}
\E\bigs[\e^{\alpha (X_t-X_0)}\bigs] = \Exp\bigs(B^{(\e^{\alpha\id}-1)\circ X}\bigs)_t.
\end{equation}
When $X$ is a L\'evy process and $\alpha$ is purely imaginary, the right-hand side of \eqref{eq:210404.2} is just the L\'evy-Khintchin formula; see \cite[Corollary~4.2]{crIII}. Useful formulae akin to \eqref{eq:210404.1} are collected in Table~\ref{tab:S4}. 

\begin{table}[t]
	\centering
	\caption{Useful identities involving the power function, stochastic exponential $\Exp$, and stochastic logarithm $\Log$.}\medskip
	{\footnotesize
		\begin{tabular}{lll}
		\hline\\[-2ex]
			Name             & Assumptions                           & Conclusions                                          \\[1ex]
			P\ref{P:190701b}  & \hspace{-0.8em}
			$\begin{array}{l}
			\alpha\in \Z_+; \text{or}\\
			\alpha \in \Z;\ \Delta X \neq -1; \text{or}\\
			\alpha\in \Cx\setminus\Z;\ \Re \Exp(X)>0
			\end{array}$\medskip
			&   $\Exp(X)^\alpha	= \Exp \left( \left((1+\id)^\alpha  -1\right)\circ X \right)$ \\
			P\ref{P:190701b}     & analogous                             &   $\Exp \bigs(X^{(1)}\bigs)^\alpha\Exp \bigs(X^{(2)}\bigs)^\beta 
			= \Exp \left( \left((1+\id_1)^\alpha (1+\id_2)^\beta -1\right)\circ X \right)$ \\[1ex]
      P\ref{P:190701b}      &	\hspace{-0.8em}		
			$\begin{array}{l}
			\alpha,\beta \in \Z;\ X,X_- \neq 0; \text{or}\\
			\alpha,\beta\in \Cx\setminus\Z;\ \ldots
			\end{array}$\medskip                              & $\begin{array}{l}\Log \left(\bigs(X^{(1)}\bigs)^\alpha \bigs(X^{(2)}\bigs)^				  																									\beta\right)\\ \quad =   \left((1+\id_1)^\alpha (1+\id_2)^\beta -1\right)\circ 		 																					\bigs(\Log\bigs(X^{(1)}\bigs),\Log\bigs(X^{(2)}\bigs)\bigs)\end{array}$\\[2ex]
			P\ref{P:190701}      & none                             & 
			(i) $\Log \bigs(\e^{X}\bigs)=(\e^{\id}-1)\circ X$;\quad 
			(ii) $\lvert\Exp (X)\rvert= \Exp\left(\left(\lvert 1+\id\rvert - 1\right) \circ X\right)$\\[1ex]
			P\ref{P:190701}      &  $\Delta X\neq -1$           & 
			(i) $\Exp (X) = \e^{\log (1+\id) \circ X}$;\quad\,\,
			(ii) $\log\lvert\Exp (X)\rvert= \log\lvert 1+\id\rvert \circ X$\\[1ex]
			P\ref{P:190701}     & $\Re\Exp(X)>0$                     &   $\log\Exp (X) = \log (1+\id) \circ X$ \\[1ex]
			P\ref{P:200604}      & \hspace{-0.8em}
			$\begin{array}{l}
			\alpha\in (0,\infty); \text{or}\\
			\alpha\in \Cx\setminus\R_+;\ \Delta X\neq -1
			\end{array}$\medskip
			&   $|\Exp(X)|^\alpha	= \Exp \left( \left(|1+\id|^\alpha  -1\right)\circ X \right)$ \\
			P\ref{P:200604}     & analogous                         &   
			$\mathopen{\bigs|}\Exp \bigs(X^{(1)}\bigs)\mathclose{\bigs|}^\alpha\mathopen{\bigs|}\Exp \bigs(X^{(2)}\bigs)\mathclose{\bigs|}^\beta 
			= \Exp \left( \left(\abs{1+\id_1}^\alpha \abs{1+\id_2}^\beta -1\right)\circ X \right)$ \\[1.5ex]
			P\ref{P:200604}      &$X,X_-\neq 0$                     & $\begin{array}{l}\Log \left(\bigs|X^{(1)}\bigs|^\alpha \bigs|X^{(2)}\bigs|^   																															\beta\right)\\ \quad   =   \left(|1+\id_1|^\alpha |1+\id_2|^\beta -1\right)																									\circ \bigs(\Log\bigs(X^{(1)}\bigs),\Log\bigs(X^{(2)}\bigs)\bigs)\end{array}$\medskip\\[1ex]
			\hline\\[-1ex]
		\end{tabular}
		}
	\label{tab:S4}\vspace*{-2ex}
\end{table}

The calculus can do more. Staying with $X$ that has independent increments, suppose that instead of the natural exponential $\e^{X-X_0}$ in \eqref{eq:210404.1}, the starting object is the stochastic exponential $\Exp(X)>0$. The calculus now provides a formula for the Mellin transform
\begin{equation*}
\E\bigs[\Exp(X)_t^\alpha\bigs] = \E\bigs[\Exp(((1+\id)^\alpha - 1)\circ X)_t\bigs] = \Exp\bigs(B^{((1+\id)^\alpha - 1)\circ X}\bigs)_t.
\end{equation*}
When $\Exp(X)$ is signed, one can evaluate $|\Exp(X)|^\alpha$ and $\sgn(\Exp(X))|\Exp(X)|^\alpha$ separately to obtain
\begin{align}
\E\bigs[|\Exp(X)_t|^\alpha\bigs] ={}& \Exp\bigs(B^{(|1+\id|^\alpha - 1)\circ X}\bigs)_t;\label{eq:210404.4}\\
\E\bigs[\sgn(\Exp(X))|\Exp(X)_t|^\alpha\bigs] ={}& \Exp\bigs(B^{(\sgn(1+\id)|1+\id|^\alpha - 1)\circ X}\bigs)_t;\label{eq:210404.5}
\end{align}
see \cite[Examples~4.4 and 4.5]{crIII}. 
We refer the reader also to the introductory paper \v{C}ern\'{y} and Ruf~\cite{crI}, 
where other concrete illustrations of the calculus are given.%
\footnote{This paper is conceptually different from \cite{crI} in two important respects. First, we provide a unified treatment of real-valued and complex-valued representations where \cite{crI} only considers two ad-hoc non-interacting subsets of representing functions that must be applied separately to real-valued and complex-valued processes, respectively. For example, \cite{crI} cannot handle the formulae \eqref{eq:210404.4} and \eqref{eq:210404.5}. Second, \cite{crI} operates strictly inside a special class $\Uni$, introduced here.}

On the theoretical side, the paper introduces the class $\Uni$ of universal representing functions that are well-behaved with respect to operations \eqref{intro1}--\eqref{intro3}; if one uses only locally bounded integration, change of variables, and composition, one is guaranteed never to leave $\Uni$, which makes the calculus completely straightforward. For example, the representing functions in \eqref{eq:210404.1}--\eqref{eq:210404.5} and also those in Table~\ref{tab:S4} are all in $\Uni$. The most important results pertaining to the class $\Uni$ are highlighted in Table~\ref{tab:I0}.

\begin{table}[t]
	\centering
	\caption{Summary of statements for $\Uni$ --- the class of universal representing functions. Here $\I(X)$ denotes the class of predictable functions for which $\xi\circ X$ is well-defined.}\medskip
	\label{tab:I0}
	{\footnotesize
		\begin{tabular}{lll}
		\hline\\[-2ex]
			Name             & Assumptions         & Conclusions\\[1.4ex]
			P\ref{P: universal}\quad\  & $\xi\in\Uni$;\ $\xi(\Delta X)$ finite & $\xi\in\I(X)$                                          \\[1ex]
			P\ref{P:1}           & none                                     & \hspace{-0.8em}
			$\begin{array}{l}\id,\id^2\in\Uni;\\[0.1ex]
				X=X_0+\id\circ X;\\[0.1ex]
				[X,X]=\id^2\circ X\medskip
			\end{array}$\\[1ex]
			P\ref{P:integral}  & $\zeta$ locally bounded predictable\quad \  & \hspace{-0.8em}
			$\begin{array}{l}
				\zeta\id\in\Uni;\\[0.1ex]
				\zeta\cdot X = \zeta\id \circ X\medskip
			\end{array}$\\
			P\ref{P:Ito}         & $f:\R\to\R$ smooth                    & \hspace{-0.8em}
			$\begin{array}{l}
				f(X_-+\id)-f(X_-)\in\Uni;\\[0.1ex]
				f(X)=f(X_0)+(f(X_-+\id)-f(X_-))\circ X\medskip
			\end{array}$\\
			T\ref{T:composition0}& $\xi,\psi\in\Uni$;\ $\psi(\xi(\Delta X))$ finite  & \hspace{-0.8em}
			$\begin{array}{l}
				\psi(\xi)\in\Uni;\\[0.1ex]
				\xi,\psi(\xi)\in\I(X);\ \psi\in\I(\xi\circ X);\\[0.1ex]
				\psi\circ(\xi\circ X)=\psi(\xi)\circ X\medskip
			\end{array}$\\[1ex]
			\hline\\
		\end{tabular}
		}\vspace{-2ex}
\end{table}

Furthermore, we develop a coherent theory for a wider class $\I(X)$ of representing functions specific to $X$, in which $\Uni$ appears as a special case. Here the ``natural'' composition rules \eqref{intro1} and \eqref{intro3} sometimes fail. We study sufficient conditions for their validity and offer counterexamples when such conditions are not met. The proposed framework does deliver closedness under composition for general stochastic integrals without further assumptions; this and other important properties of the class $\I(X)$ are collected in Table~\ref{tab:I(X)}. 

\begin{table}[t]
	\centering
	\caption{Summary of statements for $\I(X)$ --- the class of predictable functions for which $\xi\circ X$ is well-defined.}\medskip
	\label{tab:I(X)}
	{\footnotesize
		\begin{tabular}{lll}
		\hline\\[-2ex]
			Name\qquad\             & Assumptions         & Conclusions\\[1.4ex]
			P\ref{P:1}           & $\xi\in\I(X)$                                     & \hspace{-0.8em}
			$\begin{array}{l}\xi\in\I(X-X_0);\\[0.1ex]
				\xi\circ X=\xi\circ(X-X_0);\\[0.1ex]
				\Delta (\xi\circ X)=\xi(\Delta X)\medskip
			\end{array}$\\[1ex]
			P\ref{P:integral}  & $\zeta\in L(X)$\quad \  & \hspace{-0.8em}
			$\begin{array}{l}
				\zeta\id\in\I(X);\\[0.1ex]
				\zeta\cdot X = \zeta\id\circ X\medskip
			\end{array}$\\
			R\ref{R:200530}&$\zeta\in L(X)$; $\psi\in \I(\zeta\cdot X)$  & \hspace{-0.8em}
			$\begin{array}{l}
				\psi(\zeta\id)\in\I(X);\\[0.1ex]
				\psi\circ(\zeta\id\circ X) = \psi(\zeta\id)\circ X\medskip
			\end{array}$\\[1.4ex]
			C\ref{C:200522}& $\xi\in\I(X)$; $\psi\in\I(\xi\circ X)$; $\psi'(0)$ locally bounded\qquad\
		  & \hspace{-0.8em}
			$\begin{array}{l}
				\psi(\xi)\in\I(X);\\[0.1ex]
				\psi\circ(\xi\circ X)=\psi(\xi)\circ X\medskip
			\end{array}$\\[1.4ex]
			\hline\\
		\end{tabular}
		}\vspace{-2ex}
\end{table}

We shall say more on the benefits of the calculus in the concluding Section~\ref{S:6} once all notation has been introduced. The basic message is encouraging: with appropriate care one can hide much of the required stochastic analysis (stochastic integrals, jump-measure integrals) under the hood and treat common operations on stochastic processes \emph{algebraically}, as compositions of functions (indeed, $\xi\circ X$ can be interpreted in some cases as the $\xi$--variation of $X$; see Remark~\ref{R:predictable variation}). The benefits of doing so are significant, especially in the context of measure changes. 

One might expect the operations \eqref{intro1}--\eqref{intro3} to always work when the representing process $X$ is a pure-jump process of finite variation. Using only standard techniques, this intuition is false, however, because an integral of a finite variation semimartingale need not itself be of finite variation. We do obtain universal validity of rules \eqref{intro1}--\eqref{intro3} for pure-jump processes after suitably extending the standard integrals with respect to random measures. This universality then applies to all representing processes $X$ that belong \emph{sigma-locally} to the class of finite-variation pure-jump semimartingales; see Subsection~\ref{SS:sigma}.

The paper is organized as follows. Section~\ref{S:setup} introduces notation and reviews important concepts such as integration with respect to a complex-valued semimartingale.  
Section~\ref{S:3} defines representation of a semimartingale and derives important properties thereof, such as \eqref{intro1}--\eqref{intro3}. Here one gets to see an explicit formula for the object $\xi\circ X$, which is formulated in terms of real derivatives of the complex function $\xi$. This formula looks quite natural in the special case when both $\xi$ and $X$ are real-valued; such simplicity is also preserved when $\xi$ is analytic at 0 but this is much harder to see in the original definition. Subsection~\ref{SS:Wirtinger} provides an alternative form of the most general representation formula in terms of so-called Wirtinger derivatives, where the simplification in the analytic case is plainly visible.
Section~\ref{S:4} lists and proves a number of useful representations, among them generalizations of the Yor formula, thereby illustrating the strength of the proposed calculus. This section also provides counterexamples that document tightness of the results obtained in Section~\ref{S:3}. Section~\ref{S:5} summarizes the computation of predictable characteristics of a represented semimartingale. Finally, Section~\ref{S:6} discusses additional benefits of the proposed calculus and directions for future research.

\section{Setup and notation}\label{S:setup}
This section provides background on complex numbers and the probabilistic setup. It furthermore reviews stochastic integration for complex-valued semimartingales, the notion of predictable functions, and sigma-localized integrals with respect to random measures.

\subsection{The lift from \texorpdfstring{$\Cx$}{C} to \texorpdfstring{$\R^2$}{R2}}
 Below, we explicitly allow quantities to be complex-valued in order to allow for a consistent treatment of complex integrals, exponentials, etc., and in particular characteristic functions.  The reader interested only in real-valued calculus can easily skip this subsection and always replace the general `$\Cx$--valued' by the special case `$\R$--valued' in their mind.
Throughout this section, let $m \in \N$ denote an integer. To simplify notation later on, we write  $\Cinf^m =\Cx^m \cup \{\NaN\}$ for some `non-number' $\NaN \notin \bigcup_{k\in\N}\Cx^k$. 
We introduce the function $\id:\Cinf^m\to\Cinf^m$  by $\id(v)=v$.

The definitions below now hinge on the identification map $\hat \id:\Cinf\rightarrow \R^{2} \cup\{\NaN\}$ given by
\[
\hat \id(v)=\left[ 
\begin{array}{c}
\Re v \\ 
\Im v
\end{array}
\right],  \quad v \in \Cx;\qquad \hat \id(\NaN) = \NaN,
\]
and its appropriate multidimensional extension, again denoted by $\hat \id:\Cinf^m\rightarrow \R^{2m} \cup \{\NaN\}$ given by
$$\hat \id(v) = (\Re v_1,\Im v_1,\ldots,\Re v_m,\Im v_m)^\top,\quad v \in \Cx^m;\qquad \hat \id(\NaN) = \NaN.$$ 
Observe that $\hat \id(v) \in \R^{2m}$ for $v \in \Cx^m$ contains the values of $\Re v$ and $\Im v$, interlaced. 
At times we silently use matrix-valued versions of these canonical maps, which are taken to double the row dimension but which we do not introduce formally to avoid excessive notation. 

So as not to obscure the main ideas with notation, we will highlight the key properties of the lift $\hat \id$ for $m =1$.
To this end, the inverse map to $\hat \id$ is 
$\hat \id^{-1}:\R^2 \cup \{\NaN\} \rightarrow \Cinf$ given by
\[
\hat \id^{-1}\left([x\ y]^\top \right)=x+iy,  \quad [x\ y]^\top \in \R^2;\qquad \hat \id^{-1}(\NaN) = \NaN.
\]
The following two properties of $\hat \id$ are of importance:
\begin{itemize}
\item $\hat \id$ and $\hat \id^{-1}$ are linear, when restricted to $\Cx$ and $\R^2$;
\item for $u,v\in \mathbb{C}$ one obtains
\begin{equation}\label{eq:190109.2}
\hat \id(uv)=\left[\hat \id(u)\text{ }\hat \id(iu)\right]\hat \id(v)
=\left[ 
\begin{array}{cc}
\Re u & -\Im u \\ 
\Im u & \Re u
\end{array}
\right] \left[ 
\begin{array}{c}
\Re v \\ 
\Im v
\end{array}
\right] . 
\end{equation}
\end{itemize}

\subsection{Probabilistic quantities}\label{sect: technicalities}
We fix a probability space $(\Omega ,\sigalg{F},\P)$ with a right-continuous filtration $\filt{F}$. We shall assume, without loss of generality, that all semimartingales are right-continuous, and have left limits almost surely. For a brief review of standard results without the assumption that the filtration is augmented by null sets, see Perkowski and Ruf~\cite[Appendix~A]{Perkowski_Ruf_2014}. We follow mostly the notation of Jacod and Shiryaev~\cite{js.03}. 

For a $\Cx^m$--valued stochastic process $V$ we shall write $\hat V = \hat \id(V)$ for the corresponding $\R^{2m}$--valued process.  

\begin{definition}[Complex-valued process properties]\label{D:propertiesCx}
A $\mathbb{C}^{m}$--valued stochastic process $V$ is said to have a certain property, for example to be a
semimartingale (respectively, martingale; local martingale; special semimartingale; process of finite
variation; process with independent increments; predictable; locally bounded; etc.) if the $
\mathbb{R}^{2m}$--valued process $\hat V = \hat \id(V) $ has that same property, i.e., if 
$\hat V$ is a semimartingale (respectively, martingale, etc.).\qed
\end{definition}

We denote the left-limit process of a (complex-valued) semimartingale $V$ by  $V_-$ and use the convention $V_{0-} = V_0$. We also set $\Delta V = V - V_-$; in particular we have $\Delta V_0 = 0$. 
For complex-valued processes, the quadratic variation process is defined to be bilinear.%
\footnote{The bilinear definition is more prevalent. It is used, for example, in Dol\'eans-Dade~\cite{doleans-dade.70}, \'Emery~\cite{emery.89}, Revuz and Yor~\cite{revuz.yor.91}, and Protter~\cite{protter.05}. The sesquilinear alternative appears in Getoor and Sharpe~\cite{getoor.sharpe.72}.}
That is, for $\Cx$--valued semimartingales $V$ and $U$ we set  
\begin{equation*}
 [V, U]=[\Re V, \Re U]-[\Im V, \Im U] + i \left([\Re V,\Im U]+[\Im V,\Re U]\right).
\end{equation*}
We have again $[V, U]_0 = 0$.   If $V$ is $\Cx^m$--valued, then $[V, V]$ denotes the corresponding $\Cx^{m \times m}$--valued quadratic variation, formally given by
\begin{equation*}
 [V, V]=  (I_m \otimes [1\ i]) \left[\hat V, \hat V\right] \left(I_m \otimes \left[ 
 \begin{array}{c}
1 \\ 
i
\end{array}
\right]\right) ,
\end{equation*}
where $I_m$ denotes the $m\times m$ identity matrix and $\otimes$ the Kronecker product. Observe that for a $\Cx^{n \times m}$--valued matrix $R$ we have $[RV, RV] = R [V,V] R^\top$.
Furthermore, we write $[V, V]^c$ for the continuous part of the quadratic variation $[V, V]$ (the latter being of finite variation).

\begin{remark}[Alternative characterisations of {$[V,V]^c$}] \label{R:170813.1} 
We might call $V^c(\P)$ the continuous local martingale part of a semimartingale $V$; see \cite[I.4.27]{js.03}.  Note that $V^c(\P)$ depends on the under\-lying measure $\P$. To wit, for two equivalent measures $\Qu \sim \P$, we  usually have $V^c(\Qu) \neq V^c(\P)$ if $\Qu \neq \P$. Nevertheless, we always have $$[V, V]^c  = [V^c(\P), V^c(\P)] = [V^c(\Qu), V^c(\Qu)];$$ see also Dellacherie and Meyer~\cite[Theorem VIII.27]{dellacherie.meyer.82} and Protter~\cite[p.~70]{protter.05}.
\qed
\end{remark}

Let $\mu^V$ denote the jump measure of a semimartingale $V$ and $\nu^V$ its predictable compensator (under a fixed probability measure $\P$).  Then for a $\Cx$--valued  bounded predictable function $\xi$ (a precise definition is provided in Subsection~\ref{SS:predictable}) with $\xi(0) = 0$ we  have
\[	\xi *  \mu^V =  \xi \bigs(\hat \id^{-1}\bigs) *  \mu^{\hat V} = \sum_{t \leq \cdot} \xi_t(\Delta V_t),
\]
 provided $\abs{\xi} * \mu^V < \infty$.  Then  $\nu^V$ is a predictable random measure such that $\xi* \mu^V - \xi * \nu^V$ is a local martingale.  Observe
 furthermore that for an $m$--dimensional semimartingale $V$ we  have
\[
	[V,V]^c = [V,V] - \id\, \id^\top * \mu^{V}.
\]	

If $V$ is special, we let the triplet $(B^V, [\hat V,\hat V]^c, \nu^V)$ denote the  corresponding semimartingale characteristics of $V$ under a fixed probability measure $\P$.%
\footnote{We use the real-valued lift of $V$ to describe the continuous part of the quadratic variation in the characteristic triplet. This is necessary to capture the full dynamics of $V$. For example, let $V$ and $W$ denote two independent $\R$--valued Brownian motions and set $Z = \sqrt{2} V + i W$. Then $[V, V]^c = [Z,Z]^c$ but indeed  $[\hat V, \hat V]^c \neq  [\hat Z,\hat Z]^c$.}  
In particular,  the drift $B^V$, i.e., the predictable finite-variation part of the Doob--Meyer decomposition of $V$, is always assumed to start in zero, i.e., $B^V_0 = 0$.
For a general $m$--dimensional semimartingale $V$, we write  $V[1] = V - \id \indicator{\abs{\id} > 1} * \mu^V$. We can then define the `clock' (or `activity') process
\[
	A^V = \sum_{i = 1}^{2m} {\rm TV} \left(B^{\hat V[1]}_i\right) +  \tr \bigs[\hat V, \hat V\bigs]^c + (\abs{\id}^2 \wedge 1) * \nu^V,
\]
where $\rm TV$ denotes total variation. 
Then $A^V$ is  non-decreasing and locally bounded. Thanks to \cite[II.2.9]{js.03}, there exists an appropriate transition kernel $F^V$ such that 
$$\nu^V(\d t,\d v) = F^V(\d v)\d A^V_t. $$

\subsection{Stochastic integration}
In this subsection we discuss stochastic integrals of  predictable processes with respect to complex-valued semimartingales. To begin, consider a $\Cinf^{1 \times m}$--valued process $\zeta$ and a $\Cx^m$--valued semimartingale $V$.  Here $\zeta$ is explicitly allowed to take the value $\NaN$, but needs to be $\Cx^{1 \times m}$--valued, $(\P \times A^V)$--a.e., for the integral to be defined. 
If $V$ is real-valued, then we write $\zeta \in L(V)$ if both $\Re \zeta$ and $\Im \zeta$ are integrable with respect to $V$ (in the standard sense). We then set $\zeta \cdot V = (\Re \zeta) \cdot V + i (\Im \zeta) \cdot V$. 

If $V$ is complex-valued,  then we say $\zeta \in L(V)$ if $( \zeta\otimes [1\ i]) \in L( \hat V)$, where $\otimes $ represents the Kronecker product; recall also \eqref{eq:190109.2}. We then write 
\begin{equation}\label{eq:190109.1} 
	\zeta \cdot V = (\zeta\otimes [1\ i]) \cdot \hat V
\end{equation}
for the stochastic integral of $\zeta$ with respect to $V$. For real-valued $V$ the class $L(V)$ is defined twice but it is clear that the two definitions are consistent and $\zeta \cdot V$ is well defined. For $m=1$ one has $\zeta\in {L}(V) $ if and only if $[\zeta\ \ i\zeta]\in L(\hat V)$.   It is clear how to extend this definition to a $\Cinf^{n \times m}$--valued process $\zeta$, where $n \in \N$.

\begin{remark}[Caveat of complex-valued integration]
Complex-valued stochastic integrals appear in the literature in a very
limited context such as stochastic differential equations (e.g., \cite[I.4.60]{js.03}) or the It\^{o} formula 
(e.g., \cite[Proposition~V.2.3]{revuz.yor.91}). In those circumstances the integrands
are locally bounded, meaning that  vector-valued integration is not
required and integrability itself is not an issue. Our definition coincides
with these special cases when $\zeta$ is locally bounded but in
general the (real) stochastic integrals on the right-hand side of \eqref{eq:190109.1} 
cannot be computed component-wise.\qed
\end{remark}

Finally, for a $\Cx^{n \times m \times m}$--valued process $\zeta$ and a  $\Cx^{m \times m}$--valued
semimartingale $V$ (usually a quadratic variation process), let $\vecr(\zeta)$  and $\vecc(V)$ denote the row-wise and column-wise flattening of $\zeta$ and $V$, respectively. Then $\vecr(\zeta)$ is $(n \times m^2)$--dimensional and $\vecc(V)$ is $m^2$--dimensional. We then write $\zeta \in L(V)$ if $\vecr(\zeta) \in L(\vecc(V))$ and $\zeta \cdot V = \vecr(\zeta) \cdot \vecc(V)$.

\subsection{Predictable functions}\label{SS:predictable}
For this subsection, let $m,n \in \N$. 
As in \cite[II.1.4]{js.03}, we consider the notion of a predictable function on 
$\overline {\Omega}^m = \Omega \times [0,\infty) \times \Cinf^m$.
For two predictable functions $\xi: \overline{\Omega}^m \rightarrow \Cinf^n$ and $\psi: {\overline{\Omega}}^n \rightarrow \Cinf$  we shall write $\psi(\xi)$ to denote the function $(\omega, t, x) \mapsto \psi(\omega, t, \xi(\omega, t, x))$ with the convention
$\psi(\omega, t,  \NaN)=\NaN$.
If $\psi$ and $\xi$ are predictable, then so is $\psi(\xi)$.

For a predictable function $\xi: \overline{\Omega}^m \rightarrow \Cinf^n$ we shall write $\hat \xi = \hat \id (\xi)$ and $\xi^{(k)}$ for the $k$--th component of $\xi$, where $k \in \{1, \cdots, n\}$. We also write $\hat D \xi$ and $\hat D^2 \xi$ for the real derivatives of $\xi$, i.e., $\hat D_i \xi^{(k)}$ is the composition of the $i$--th element of the gradient of $\xi^{(k)} \bigs( \hat \id^{-1}\bigs)$ and  the lift $\hat \id$ and $\hat D_{i,j}^2  \xi^{(k)}$ is the composition of the $(i,j)$--th element of the Hessian of $\xi^{(k)} \bigs( \hat \id^{-1}\bigs)$ and the lift $\hat \id$, for $i,j \in \{1, \cdots, 2 m\}$.
Note that $\hat D \xi$ has dimension $n \times (2m)$, $\hat D^2 \xi$ has dimension $n \times (2m) \times (2m)$, and the domains of $\hat D \xi$, $\hat D^2 \xi$ equal $\overline {\Omega}^m$, i.e., they coincide with the domain of $\xi$.

We want  to allow for predictable functions such as $\xi=\log(1+\id)$ whose effective domain is not the entire $\Cx$. For this reason, we define, for a given predictable function $\xi: \overline{\Omega}^m \rightarrow \Cinf^n$, the set of semimartingales whose jumps are compatible with $\xi$, i.e.,
\begin{equation*}
\Dom(\xi) = \left\{\text{semimartingale } V:\xi(\Delta V) \text{ is $\Cx^n$--valued}, \P\text{--almost surely}\right\}.
\end{equation*}
If for another predictable function $\psi: \overline{\Omega}^n \rightarrow \Cinf^m$ we have
 $\psi (\xi(\Delta V)) = \Delta V$ for all $V\in \Dom(\xi)$, we say $\xi$ allows for a left inverse.
 If $\xi(\psi(\Delta V)) = \Delta V$ for all $V \in \Dom(\psi)$ we say that $\xi$ allows for a right inverse. If $\psi$ represents both left and right inverse we shall use the notation $\xi^{-1} = \psi$.

\subsection{Sigma-localized integrals with respect to random measures} \label{SS:sigma}
We next recall from \v{C}ern\'{y} and Ruf~\cite{cr0} relevant results about the sigma-localized version of the $*$ integral of a predictable function with respect to $\nu^V$ and $\mu^V$ for a semimartingale $V$, which we fix from now on to the end of this section. The following is adapted from \cite[Definition~3.1]{cr0}.
\begin{definition}[Extended integral with respect to random measure] \label{D:190405}
 Denote by $L(\mu^V)$ the set of predictable functions that are absolutely integrable with respect to $\mu^V$. We say that a predictable function $\xi$ belongs to $L_\sigma(\mu^V)$, the 
sigma--localized class of $L(\mu^V)$, 
if there is a sequence $(C_k)_{k \in \N}$ of predictable sets increasing to 
$\Omega\times[0,\infty)$ and a semimartingale $Y$ such that $\indicator{C_k}\xi\in L(\mu^V)$ for each $k\in\N$ and 
$$ (\indicator{C_k}\xi)*\mu^V = \indicator{C_k}\cdot Y,\qquad k\in\N.$$
In such case the semimartingale $Y$ is denoted by $\xi \star \mu^V$.

Similarly, we define  $L_\sigma(\nu^V)$ and $\xi \star \nu^V$. \qed
\end{definition}

In the following, we recall useful characterizations for $L_\sigma(\nu^V)$ and $L_\sigma(\mu^V)$.
\begin{proposition}[Kallsen~\cite{kallsen.04}, Definition~4.1, Lemma~4.1]
For a predictable function $\xi$ the following statements are equivalent.
\begin{enumerate}[label={\rm(\roman{*})}, ref={\rm(\roman{*})}]  \label{L:190405}
\item $\xi\in L_\sigma(\nu^V)$.
\item The following two conditions hold:
	\begin{enumerate}[label={\rm(\alph{*})}, ref={\rm(\alph{*})}]
		\item $\int\abs{\xi_t(v)} F^V_t(\d v) < \infty \quad (\P \times A^V)$--a.e. 
		\item $\int_0^\cdot \left| \int \xi_t(v) F^V_t(\d v) \right|  \d A^V_t < \infty$.
	\end{enumerate}
\end{enumerate}
Moreover, for $\xi\in L_{\sigma}(\nu^V)$ one has 
\[
\xi \star \nu^V =\int_0^\cdot \left(\int \xi_t( v) F^V_t(\d v) \right) \d A_t^V.
\]
\end{proposition}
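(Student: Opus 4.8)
The plan is to reduce the entire statement to the disintegration $\nu^V(\d t,\d v)=F^V_t(\d v)\,\d A^V_t$ together with one analytic input: for any predictable function $\eta$ with $\abs{\eta}*\nu^V<\infty$ the $*$--integral may be evaluated slice-wise,
$$\eta*\nu^V=\int_0^\cdot\Big(\int\eta_t(v)F^V_t(\d v)\Big)\d A^V_t.$$
I would establish this identity first, as it is the only genuine computation; it follows from the construction of the $*$--integral of a predictable function against $\nu^V$ and Fubini applied to the kernel $F^V$. Setting $g_t=\int\xi_t(v)F^V_t(\d v)$ whenever the inner integral converges absolutely, the assertion becomes: $\xi\in L_\sigma(\nu^V)$ precisely when $g$ is defined $(\P\times A^V)$--a.e.\ (condition (a)) and is $\d A^V$--integrable (condition (b)), in which case $\xi\star\nu^V=\int_0^\cdot g_t\,\d A^V_t$.

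For the implication (ii)$\Rightarrow$(i) I would use the canonical localizing sequence $C_k=\{(\omega,t):\int\abs{\xi_t(v)}F^V_t(\d v)\le k\}$, which increases to $\Omega\times[0,\infty)$ off a $(\P\times A^V)$--null set by (a). On $C_k$ the slice-wise bound gives $\indicator{C_k}\abs{\xi}*\nu^V\le k\,A^V<\infty$, since $A^V$ is finite-valued; hence $\indicator{C_k}\xi\in L(\nu^V)$. Applying the slice formula once more yields $(\indicator{C_k}\xi)*\nu^V=\int_0^\cdot\indicator{C_k}(t)g_t\,\d A^V_t=\indicator{C_k}\cdot Y$, where $Y:=\int_0^\cdot g_t\,\d A^V_t$ is a genuine finite-variation semimartingale thanks to (b). This is exactly the defining property in Definition~\ref{D:190405}, so $\xi\in L_\sigma(\nu^V)$, and it simultaneously identifies $\xi\star\nu^V=Y$, which is the ``moreover'' claim.

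For the converse (i)$\Rightarrow$(ii) I would start from an arbitrary localizing pair $((C_k),Y)$ with $Y=\xi\star\nu^V$. Absolute integrability of $\indicator{C_k}\xi$ forces $\int\abs{\xi_t(v)}F^V_t(\d v)<\infty$ on $C_k$, and letting $k\to\infty$ gives (a). Combining the slice formula on $C_k$ with $(\indicator{C_k}\xi)*\nu^V=\indicator{C_k}\cdot Y$ shows $\indicator{C_k}\cdot Y=\int_0^\cdot\indicator{C_k}g\,\d A^V$, so $\d Y=g\,\d A^V$ on the union of the $C_k$; since each $\indicator{C_k}\cdot Y$ is predictable of finite variation, so is $Y$, and its pathwise total variation equals $\int_0^\cdot\abs{g_t}\,\d A^V_t$, which is therefore finite --- this is (b).

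I expect the main obstacle to be the bookkeeping with $(\P\times A^V)$--null sets: the definition requires the $C_k$ to exhaust $\Omega\times[0,\infty)$ exactly, not merely up to a null set, so one must absorb the exceptional set where $g$ is undefined into every $C_k$ and check this does not disturb the integrals. The second delicate point is that $\xi\star\nu^V$ must be shown to be well defined independently of the localizing sequence, i.e.\ that the limiting predictable finite-variation process $Y$ is unique. Both are resolved by the uniqueness of predictable finite-variation compensators, but they are where care is genuinely needed; the analytic core --- the slice-wise formula --- is routine.
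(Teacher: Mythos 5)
A preliminary remark: the paper never proves this proposition itself --- it is imported wholesale from Kallsen~\cite{kallsen.04}, Definition~4.1 and Lemma~4.1, with only the comment that the passage from real-valued to complex-valued $V$ is straightforward. So the benchmark is Kallsen's argument, and your skeleton matches it: the slice/disintegration formula as the single analytic input, the explicit localizing sets $C_k=\{\int\abs{\xi_t(v)}F^V_t(\d v)\le k\}$ (with the exceptional set absorbed so that the $C_k$ increase to all of $\Omega\times[0,\infty)$), and the identification of $\xi\star\nu^V$ with the iterated integral. Your direction (ii)$\Rightarrow$(i), including the ``moreover'' identity and the well-definedness point, is correct as written.

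The gap is in (i)$\Rightarrow$(ii), at the sentence ``since each $\indicator{C_k}\cdot Y$ is predictable of finite variation, so is $Y$.'' That inference is the entire nontrivial content of the lemma, and it is not an aggregation triviality: without the word ``predictable'' it is simply false, because sigma-localization strictly enlarges the class $\V$ --- this is precisely the phenomenon the surrounding part of the paper is organized around ($\V^{\d}_\sigma\supsetneq\V^{\d}$; ``an integral of a finite variation semimartingale need not itself be of finite variation''). Finiteness of the variation of each piece $\indicator{C_k}\cdot Y$ cannot by itself give finiteness of $\int_0^\cdot\abs{g_t}\,\d A^V_t$: a priori the positive and negative parts of $g\,\d A^V$ could both diverge while the particular combinations $\indicator{C_k}\cdot Y$ stay finite, and what rules this out is the standing assumption that $Y$ is a semimartingale combined with predictability. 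A correct argument needs roughly: (1) $Y-Y_0$ is predictable, being the limit uniformly on compacts in probability (hence, along a subsequence, an a.s.\ pointwise limit) of the predictable processes $\indicator{C_k}\cdot Y$; (2) a c\`adl\`ag predictable process is locally bounded, so $Y$ is special, $Y=Y_0+M+B$ with $M$ a local martingale and $B$ predictable of finite variation; (3) $\indicator{C_k}\cdot M=\indicator{C_k}\cdot(Y-Y_0)-\indicator{C_k}\cdot B$ is then a predictable local martingale of finite variation starting at zero, hence identically zero, and letting $k\to\infty$ gives $M=0$. Only now is $Y-Y_0=B$ of finite variation, so that your ``$\d Y=g\,\d A^V$ on $\bigcup_k C_k$'' becomes an identity of signed measures, yielding condition (b) as the total variation of $B$ together with the closing formula. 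In short, the step you treat as automatic is exactly the sigma-stability of the class of predictable finite-variation semimartingales --- the substance of Kallsen's lemma --- whereas the two points you flag as delicate (null-set bookkeeping and uniqueness of $\xi\star\nu^V$) are comparatively routine.
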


\begin{proposition}[\cite{cr0}, Proposition~3.4]\label{P:190330}
For a predictable function $\xi$ the following statements are equivalent.
\begin{enumerate}[label={\rm(\roman{*})}, ref={\rm(\roman{*})}]
\item\label{P:190330.i} $\xi\in L_\sigma(\mu^V)$.
\item\label{P:190330.ii} The following two conditions hold.
		\begin{enumerate}[label={\rm(\alph{*})}, ref={\rm(\alph{*})}]
				\item $\abs{\xi}^2 * \mu^V < \infty$.
				\item $\xi \indicator{\{\abs{\xi} \leq 1\}} \in L_\sigma(\nu^V)$. 
			\end{enumerate}
\end{enumerate}
Furthermore, for $\xi \in L_{\sigma}(\mu^V)$ one has 
\begin{equation} \label{eq:190405.1}
\xi \star \mu^V =\xi\indicator{\{\abs{\xi}> 1\}}*\mu^V+\xi\indicator{\{\abs{\xi}\leq 1\}}*(\mu^V-\nu^V)+\xi\indicator{\{\abs{\xi}\leq 1\}}\star\nu^V,
\end{equation}
where the integral with respect to $\mu^V-\nu^V$ is defined in \cite[II.1.27(b)]{js.03}.
\end{proposition}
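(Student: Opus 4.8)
The plan is to prove both implications \emph{through} the explicit three-term decomposition~\eqref{eq:190405.1}, so that the equivalence and the formula come out together. Throughout I split $\xi=\xi\indicator{\{\abs\xi>1\}}+\xi\indicator{\{\abs\xi\le1\}}$ into its large- and small-jump parts, and I use the classical Jacod--Shiryaev theory of the compensated integral $W*(\mu^V-\nu^V)$ for $W\in G_{loc}(\mu^V)$ (see \cite[II.1.27]{js.03}) together with the Kallsen characterisation of $L_\sigma(\nu^V)$ recalled above (\cite{kallsen.04}). The single technical fact I will invoke repeatedly is that an adapted increasing process with bounded jumps is locally integrable; this is what lets me pass between integrals against $\mu^V$ and against $\nu^V$.

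For \ref{P:190330.ii}$\Rightarrow$\ref{P:190330.i} and the formula, I treat the three terms on the right of~\eqref{eq:190405.1} in turn. Since $\abs\xi\indicator{\{\abs\xi>1\}}\le\abs\xi^2$, condition (a) makes the large-jump term $\xi\indicator{\{\abs\xi>1\}}*\mu^V$ an absolutely convergent, finite-variation pure-jump process. For the middle term, (a) gives $(\xi\indicator{\{\abs\xi\le1\}})^2*\mu^V<\infty$; as this increasing process has jumps bounded by $1$ it is locally integrable, so its compensator $(\xi\indicator{\{\abs\xi\le1\}})^2*\nu^V$ exists and lies in $\mathcal A^+_{loc}$. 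Hence $\xi\indicator{\{\abs\xi\le1\}}\in G_{loc}(\mu^V)$ and $\xi\indicator{\{\abs\xi\le1\}}*(\mu^V-\nu^V)$ is a (locally square-integrable) local martingale. The last term is defined by (b). I then set $Y$ equal to the sum of the three, take the predictable sets $(C_k)$ supplied by $\xi\indicator{\{\abs\xi\le1\}}\in L_\sigma(\nu^V)$, and check the defining identity of Definition~\ref{D:190405}: on $C_k$ the absolute $\nu^V$-integrability of the small part together with (a) yields $\indicator{C_k}\xi\in L(\mu^V)$, and $(\indicator{C_k}\xi)*\mu^V=\indicator{C_k}\cdot Y$ then follows term by term, using that multiplication by the predictable indicator $\indicator{C_k}$ commutes with each of the three integrals. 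This proves $\xi\in L_\sigma(\mu^V)$, $\xi\star\mu^V=Y$, and hence~\eqref{eq:190405.1}.

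For \ref{P:190330.i}$\Rightarrow$\ref{P:190330.ii} I start from $Y=\xi\star\mu^V$ and its localizing sets $(D_k)$. Matching jumps in $(\indicator{D_k}\xi)*\mu^V=\indicator{D_k}\cdot Y$ and letting $k\to\infty$ gives $\xi(\Delta V)=\Delta Y$ on $\{\Delta V\neq0\}$, whence $\abs\xi^2*\mu^V=\sum\abs{\Delta Y}^2\le\tr[\hat Y,\hat Y]<\infty$; this is condition (a). With (a) available, the large-jump and compensated middle terms are defined exactly as before, so $N:=Y-\xi\indicator{\{\abs\xi>1\}}*\mu^V-\xi\indicator{\{\abs\xi\le1\}}*(\mu^V-\nu^V)$ is a well-defined, purely discontinuous semimartingale whose jumps $\int\xi\indicator{\{\abs\xi\le1\}}(v)\,\nu^V(\{t\}\times\d v)$ are predictable. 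Because $\xi\indicator{\{\abs\xi\le1\}}$ is bounded, lies in $L_\sigma(\mu^V)$ (it differs from $\xi$ only by the finite-variation large-jump term), and belongs to $G_{loc}(\mu^V)$, the relation $\xi\indicator{\{\abs\xi\le1\}}\star\mu^V=\xi\indicator{\{\abs\xi\le1\}}*(\mu^V-\nu^V)+\xi\indicator{\{\abs\xi\le1\}}\star\nu^V$ identifies $N$ as the $\star\nu^V$-integral of the small part, i.e.\ $\xi\indicator{\{\abs\xi\le1\}}\in L_\sigma(\nu^V)$, which is (b).

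I expect the main obstacle to be the bookkeeping of three distinct localization notions at once: the $\sigma$-localized $\mu^V$-integral of Definition~\ref{D:190405}, the Jacod--Shiryaev $G_{loc}$/compensated-integral machinery governing the middle term, and Kallsen's $L_\sigma(\nu^V)$ governing the drift term. The delicate point is verifying that the defining identity $(\indicator{C_k}\xi)*\mu^V=\indicator{C_k}\cdot Y$ survives multiplication by $\indicator{C_k}$ across a sum that mixes an \emph{absolute} integral, a \emph{compensated} (local-martingale) integral, and a \emph{predictable} drift integral, since these three objects localize in genuinely different ways; equivalently, in the reverse direction, isolating the drift remainder $N$ and certifying it as an honest $\star\nu^V$-integral. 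The recurring lemma that an increasing process with bounded jumps is locally integrable is the technical linchpin that makes the passage between $\mu^V$ and $\nu^V$, and hence the compatibility of the localizing sequences, go through.
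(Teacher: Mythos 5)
The paper itself contains no proof of this proposition: it is imported from \cite[Proposition~3.4]{cr0}, with only the remark (following Proposition~\ref{P:190411}) that the reference treats the $\R$--valued case and that the complex-valued extension is straightforward. So your attempt can only be measured against the standard route such a proof must take, and that is exactly the route you chose: split $\xi$ into $\xi\indicator{\{\abs{\xi}>1\}}$ and $\xi\indicator{\{\abs{\xi}\leq 1\}}$, control the large-jump part by absolute convergence via $\abs{\xi}\indicator{\{\abs{\xi}>1\}}\leq\abs{\xi}^2$, put the small-jump part into $G_{\mathrm{loc}}(\mu^V)$ (finite-valued increasing process with jumps bounded by $1$ is locally integrable), and do the localization bookkeeping with predictable sets. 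Your forward direction is sound, and your derivation of (a) in the reverse direction by jump matching, $\abs{\xi}^2*\mu^V\leq\tr\bigs[\hat Y,\hat Y\bigs]<\infty$, is correct.

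One step must be reworded, because as written it is circular: in (i)$\Rightarrow$(ii) you invoke ``the relation $\xi\indicator{\{\abs{\xi}\le1\}}\star\mu^V=\xi\indicator{\{\abs{\xi}\le1\}}*(\mu^V-\nu^V)+\xi\indicator{\{\abs{\xi}\le1\}}\star\nu^V$,'' but this relation presupposes $\xi\indicator{\{\abs{\xi}\le1\}}\in L_\sigma(\nu^V)$, which is precisely the condition (b) you are trying to establish. The repair is short and uses only tools you already deploy elsewhere: take the localizing sets $(D_k)$ witnessing $\xi\in L_\sigma(\mu^V)$; then $\abs{\xi}\indicator{\{\abs{\xi}\le1\}}\indicator{D_k}*\mu^V<\infty$ has jumps bounded by $1$, so it is locally integrable and its compensator $\abs{\xi}\indicator{\{\abs{\xi}\le1\}}\indicator{D_k}*\nu^V$ is finite, i.e.\ $\indicator{D_k}\xi\indicator{\{\abs{\xi}\le1\}}\in L(\nu^V)$; then \cite[II.1.28]{js.03} and the commutation of $\indicator{D_k}$ with the three integrals yield $\indicator{D_k}\cdot N=\bigs(\indicator{D_k}\xi\indicator{\{\abs{\xi}\le1\}}\bigs)*\nu^V$, which is exactly the defining property in Definition~\ref{D:190405} of $\xi\indicator{\{\abs{\xi}\le1\}}\in L_\sigma(\nu^V)$ with $\xi\indicator{\{\abs{\xi}\le1\}}\star\nu^V=N$. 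Since you explicitly identified ``certifying $N$ as an honest $\star\nu^V$-integral'' as the delicate point and all the ingredients appear in your forward direction, I read this as a compression in the write-up rather than a wrong idea; with that step unpacked (and noting that the passage from $\nu^V$-integrability to $\mu^V$-integrability in your forward direction uses the companion fact that a \emph{predictable} increasing finite-valued process is locally bounded), the proof is complete.
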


\begin{remark}[\cite{cr0}, Remarks~3.2 and 3.5]\ \label{R:190405}
	Let $\Qu$ denote a probability measure absolutely continuous  with respect to $\P$. With the obvious notation, we then have  $L_\sigma^\P(\mu^V) \subset L_\sigma^\Qu(\mu^V)$. For $L_\sigma^\P(\nu^V(\P))$ and $L_\sigma^\Qu(\nu^V(\Qu))$, no such inclusions hold in general. However, for $\xi$ with $\xi^2 * \mu^V < \infty$ Proposition~\ref{P:190330} yields that if $\xi \indicator{\{\abs{\xi} \leq 1\}} \in L_\sigma(\nu^V(\P))$ then also $\xi \indicator{\{\abs{\xi} \leq 1\}} \in L_\sigma(\nu^V(\Qu))$.
\qed
\end{remark}

 Next we recall a composition property for stochastic integrals. Such result does not hold if the $\star$ integral were to be replaced by the $*$ integral.
\begin{proposition}[\cite{cr0}, Proposition~3.9]\label{P:190411}
For $\xi \in L_\sigma(\mu^V)$ taking values in $\Cinf^n$ for some $n \in \N$ and a $\Cinf^{1, n}$--valued predictable process $\zeta$ the following statements are equivalent.

\begin{enumerate}[label={\rm(\roman{*})}, ref={\rm(\roman{*})}]
\item\label{P:190411.i} $\zeta\in L(\xi\star\mu^V)$.
\item\label{P:190411.ii} $\zeta\xi\in L_\sigma(\mu^V)$.
\end{enumerate}
Furthermore, if either condition holds then
$\zeta\cdot(\xi\star\mu^V) = (\zeta\xi)\star\mu^V$.
\end{proposition}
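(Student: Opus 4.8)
The plan is to establish both implications, and the integral identity, at a single stroke by localizing directly from the definition of $L_\sigma(\mu^V)$, thereby reducing everything to the elementary composition rule for \emph{absolutely} integrable random-measure integrals. Fix a localizing sequence $(C_k)$ for $\xi\in L_\sigma(\mu^V)$ and write $Y=\xi\star\mu^V$, so that $\indicator{C_k}\xi\in L(\mu^V)$ and $(\indicator{C_k}\xi)*\mu^V=\indicator{C_k}\cdot Y$ for every $k$. Since $\zeta$ is $\Cx^{1,n}$--valued $(\P\times A^V)$--a.e., the sets $G_k=\{\abs{\zeta}\le k\}$ are predictable and increase to $\Omega\times[0,\infty)$ up to a $(\P\times A^V)$--null set, and I would work with the combined sequence $H_k=C_k\cap G_k$, which is predictable, increases to $\Omega\times[0,\infty)$, and on which $\indicator{H_k}\zeta$ is bounded while $\indicator{H_k}\xi\in L(\mu^V)$.

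The one elementary ingredient I would isolate is this: for an absolutely integrable $\eta\in L(\mu^V)$ and a bounded predictable multiplier $\theta$, one has $\theta\eta\in L(\mu^V)$ and $\theta\cdot(\eta*\mu^V)=(\theta\eta)*\mu^V$, simply because $\eta*\mu^V=\sum_{s\le\cdot}\eta_s(\Delta V_s)$ is of finite variation and pure jump, so the $\cdot$ integral collapses to a pathwise Stieltjes sum over the jumps of $V$. Using this with $\theta=\indicator{H_k}$ and $\eta=\indicator{C_k}\xi$ (and $H_k\subseteq C_k$) gives $\indicator{H_k}\cdot Y=(\indicator{H_k}\xi)*\mu^V$; using it once more with $\theta=\indicator{H_k}\zeta$, together with associativity of the $\cdot$ integral, delivers the single pivotal identity
$$(\indicator{H_k}\zeta)\cdot Y=(\indicator{H_k}\zeta\xi)*\mu^V,\qquad k\in\N.$$

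Both directions then read off from this identity. For \ref{P:190411.i}$\Rightarrow$\ref{P:190411.ii}, if $\zeta\in L(Y)$ then, setting $Z=\zeta\cdot Y$ and using associativity, the left-hand side equals $\indicator{H_k}\cdot Z$; hence $(\indicator{H_k}\zeta\xi)*\mu^V=\indicator{H_k}\cdot Z$ for all $k$, which is precisely the defining property that certifies $\zeta\xi\in L_\sigma(\mu^V)$ with $(\zeta\xi)\star\mu^V=Z=\zeta\cdot(\xi\star\mu^V)$. For \ref{P:190411.ii}$\Rightarrow$\ref{P:190411.i}, set $Z=(\zeta\xi)\star\mu^V$ and refine $(H_k)$ to additionally lie in a localizing sequence for $\zeta\xi$; the same bounded-composition reduction then yields $\indicator{H_k}\cdot Z=(\indicator{H_k}\zeta\xi)*\mu^V$, so the pivotal identity becomes $(\indicator{H_k}\zeta)\cdot Y=\indicator{H_k}\cdot Z$, and I would pass to the limit in $k$.

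The main obstacle, and the only non-elementary step, is exactly this passage to the limit. Since $\indicator{H_k}\uparrow 1$, dominated convergence for stochastic integrals gives $\indicator{H_k}\cdot Z\to Z$ in the semimartingale topology, so the integrals $(\indicator{H_k}\zeta)\cdot Y$ converge while their integrands $\indicator{H_k}\zeta\to\zeta$ pointwise with $\abs{\indicator{H_k}\zeta}\uparrow\abs{\zeta}$. To conclude $\zeta\in L(Y)$ and $\zeta\cdot Y=Z$ one cannot pass to the limit naively; instead one invokes the closedness of the space of stochastic integrals $\{\theta\cdot Y:\theta\in L(Y)\}$ in the semimartingale topology (M\'emin's theorem) and identifies the limiting integrand as $\zeta$. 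This is the precise point at which the argument would break down for the plain $*$ integral, since there $\eta*\mu^V$ need not even be a semimartingale; everything else is bookkeeping with predictable indicators and the jump representation.
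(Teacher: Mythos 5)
Two preliminary remarks. First, the paper contains no proof of this proposition to compare against: it is imported verbatim from \cite[Proposition~3.9]{cr0}, with only the remark that the passage from real-valued to complex-valued $V$ is routine; I therefore assess your argument on its own terms. Second, most of it is correct. The combined localizing sequence $H_k$, the bounded-multiplier lemma for absolutely convergent $*$ integrals, and the resulting pivotal identity $(\indicator{H_k}\zeta)\cdot Y=(\indicator{H_k}\zeta\xi)*\mu^V$ are all valid (modulo the fixable wrinkle that $\{\abs{\zeta}\le k\}$ exhausts $\Omega\times[0,\infty)$ only up to a null set, and that in direction (i) finiteness of $\zeta$ is a priori only $(\P\times A^{\xi\star\mu^V})$--a.e.\ rather than $(\P\times A^V)$--a.e.\ as you assert). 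In particular the implication (i)$\Rightarrow$(ii), together with the identity $\zeta\cdot(\xi\star\mu^V)=(\zeta\xi)\star\mu^V$, is complete: the pair consisting of $(H_k)_{k\in\N}$ and $Z=\zeta\cdot Y$ is exactly a certificate in the sense of Definition~\ref{D:190405}.

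The genuine gap is in (ii)$\Rightarrow$(i), at the words ``identifies the limiting integrand as $\zeta$.'' M\'emin's closedness theorem yields only that the limit $Z$ of $(\indicator{H_k}\zeta)\cdot Y$ in the semimartingale topology equals $\theta\cdot Y$ for \emph{some} $\theta\in L(Y)$; it says nothing about $\theta$ being $\zeta$, and what you must produce is membership $\zeta\in L(Y)$, which cannot be read off from the mere existence of $\theta$. Concretely, combining $Z=\theta\cdot Y$ with your identity gives $(\indicator{H_k}(\zeta-\theta))\cdot Y=0$ for all $k$, and comparison of jumps gives $\zeta\Delta Y=\theta\Delta Y$; but to conclude $\zeta-\theta\in L(Y)$, hence $\zeta\in L(Y)$, you need precisely the statement that stochastic integration with respect to $Y$ is $\sigma$--stable: if $D_k\uparrow\Omega\times[0,\infty)$ are predictable, $\indicator{D_k}\eta\in L(Y)$, and $(\indicator{D_k}\eta)\cdot Y$ converges in the semimartingale topology, then $\eta\in L(Y)$ and the limit is $\eta\cdot Y$. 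This is true, but it is strictly stronger than closedness of $\{\theta\cdot Y:\theta\in L(Y)\}$, and it is in fact the entire content of the hard direction --- note that its analogue for the $*$ integral is false (which is exactly why $\star$ differs from $*$), so it cannot be treated as routine bookkeeping. To close the proof you must either establish this identification result or cite it precisely (it can be extracted from the construction of the vector integral in \cite{shiryaev.cherny.02}, or from the $\sigma$--localization machinery of \cite{kallsen.04}); as written, the hard implication has been reduced to the key fact rather than derived from it.
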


The previous three propositions and the remaining ones of this section are proved in the corresponding references for the case when $V$ is $\R$--valued. The arguments for the general case are straightforward; see also \cite[Remark~2.1]{cr0}.

We next denote by $\V$ the set of semimartingales with finite variation on compact time intervals and by $\V^{\d}$ the subset of finite variation pure-jump processes, i.e., those semimartingales $V\in\V$ that satisfy $V = V_0 + \id \ast \mu^V$.
The statements in  this subsection can also be expressed in terms of a special class of semimartingales $\V^{\d}_\sigma$, i.e., the  $\sigma$--localized class of finite variation pure-jump processes.  The key connection is the following.

\begin{proposition}[\cite{cr0}, Proposition~3.12]\label{P:190911}
If $\xi \in L_\sigma(\mu^V)$ then $\xi \star \mu^V$ is an element of $\V^{\d}_\sigma$. Conversely, if $V\in \V^{\d}_\sigma$ then $\id \in L_\sigma(\mu^V)$ and $V = V_0 + \id \star \mu^V$.
\end{proposition}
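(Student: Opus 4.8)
The plan is to reduce both implications to one structural fact about absolutely convergent jump integrals and then to reuse the same localizing sequence on each side. The fact I would isolate first is the following: whenever a predictable function $\eta$ satisfies $\abs{\eta}*\mu^V<\infty$ (that is, $\eta\in L(\mu^V)$), the process $Z=\eta*\mu^V$ lies in $\V^{\d}$. This is immediate from the description of the $*$ integral as the absolutely convergent sum of jumps $Z=\sum_{s\le\cdot}\eta_s(\Delta V_s)\indicator{\{\Delta V_s\neq0\}}$: such a $Z$ has $Z_0=0$, is of finite variation by the integrability hypothesis, and satisfies $\Delta Z_s=\eta_s(\Delta V_s)\indicator{\{\Delta V_s\neq0\}}$, so that $\id*\mu^Z=\sum_{s\le\cdot}\Delta Z_s=Z$, which is exactly the defining property $Z=Z_0+\id*\mu^Z$ of $\V^{\d}$.

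For the forward implication I would take $\xi\in L_\sigma(\mu^V)$ and let $(C_k)$ and $Y=\xi\star\mu^V$ be as in Definition~\ref{D:190405}, so that $\indicator{C_k}\xi\in L(\mu^V)$ and $\indicator{C_k}\cdot Y=(\indicator{C_k}\xi)*\mu^V$ for every $k$. By the fact above, each $(\indicator{C_k}\xi)*\mu^V$ is an element of $\V^{\d}$, and hence the \emph{same} predictable sets $(C_k)$, which increase to $\Omega\times[0,\infty)$, witness $\indicator{C_k}\cdot Y\in\V^{\d}$. By the definition of the $\sigma$--localized class this gives $Y=\xi\star\mu^V\in\V^{\d}_\sigma$.

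For the converse I would start from $V\in\V^{\d}_\sigma$ and choose predictable sets $(D_k)$ increasing to $\Omega\times[0,\infty)$ with $\indicator{D_k}\cdot V\in\V^{\d}$. The jump rule for stochastic integrals gives $\Delta(\indicator{D_k}\cdot V)_s=\indicator{D_k}(s)\Delta V_s$, so the finite-variation pure-jump property of $\indicator{D_k}\cdot V$ reads $\abs{\indicator{D_k}\id}*\mu^V=\sum_{s\le\cdot}\indicator{D_k}(s)\abs{\Delta V_s}<\infty$, whence $\indicator{D_k}\id\in L(\mu^V)$ for each $k$. Moreover, being an element of $\V^{\d}$ with vanishing initial value, $\indicator{D_k}\cdot V$ equals the sum of its own jumps, and because $\Delta(\indicator{D_k}\cdot V)_s=\indicator{D_k}(s)\Delta V_s$ this sum is precisely $(\indicator{D_k}\id)*\mu^V$. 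Taking $Y=V-V_0$, so that $\indicator{D_k}\cdot V=\indicator{D_k}\cdot Y$, shows that $(D_k)$ and $Y$ satisfy Definition~\ref{D:190405}; this yields $\id\in L_\sigma(\mu^V)$ together with $\id\star\mu^V=V-V_0$, i.e. $V=V_0+\id\star\mu^V$.

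The only genuinely delicate point is the $\sigma$--localization bookkeeping: one must verify that a single semimartingale $Y$ is compatible with the whole localizing sequence rather than generating a different candidate for each $k$. In the converse this is handled by the consistency relation $\indicator{D_j}\cdot(\indicator{D_k}\cdot V)=\indicator{D_j}\cdot V$ for $D_j\subseteq D_k$, which guarantees that the restrictions glue to the globally defined process $V-V_0$; in the forward direction no such work is needed, since $Y$ is supplied to us by the definition of $\xi\star\mu^V$. Everything else is a direct unwinding of the definitions of $L_\sigma(\mu^V)$ and of $\V^{\d}$, so I expect no substantial obstacle beyond this matching of localizing data.
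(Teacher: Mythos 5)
The paper does not actually prove this proposition: it is recalled verbatim from the companion work \cite{cr0} (Proposition~3.12), so there is no in-paper proof to compare against. Your argument is correct and is the natural definitional unwinding on which the cited result rests: the key fact that $\eta \in L(\mu^V)$ implies $\eta * \mu^V \in \V^{\d}$ (absolutely summable jumps give finite variation, and the process equals the sum of its own jumps), reuse of the localizing sequence $(C_k)$ from Definition~\ref{D:190405} in the forward direction, and the choice $Y = V - V_0$ with the sequence $(D_k)$ witnessing $V \in \V^{\d}_\sigma$ in the converse. The only point left implicit is the well-definedness of the $\star$ integral, i.e.\ uniqueness (under the normalization $Y_0 = 0$) of the semimartingale $Y$ across different admissible localizing sequences, which you need in order to pass from ``$V - V_0$ is one admissible $Y$'' to ``$\id \star \mu^V = V - V_0$''; this is part of Definition~\ref{D:190405} being well posed (established in \cite{cr0}, via a dominated-convergence argument showing that $\indicator{E_k} \cdot Z = 0$ for predictable $E_k$ increasing to $\Omega \times [0,\infty)$ forces $Z = Z_0$) rather than part of the proposition itself, so taking it as given is reasonable.
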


We conclude this section with a natural decomposition of $V$ into jumps at predictable times and a quasi-left-continuous process. 
\begin{proposition}[\cite{cr0}, Proposition~3.15]  \label{P:190729}
Every semimartingale $V$ has the unique decomposition 
\begin{equation*}
V = V_0 + V^{\q} + V^{\ddp},
\end{equation*}
where  $V^\q_0=V^{\ddp}_0 = 0$, $V^{\q}$ is a quasi-left-continuous semimartingale,  $V^{\ddp}$ jumps only at predictable times, and $V^{\ddp} \in \V^{\d}_\sigma$. We then have $ [V^\q,V^\ddp] = 0$.
\end{proposition}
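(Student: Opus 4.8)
The plan is to isolate the jumps of $V$ that sit at predictable times by a single $\star$--integral and to show that the remainder is quasi-left-continuous. First I would locate the thin predictable set carrying these jumps. Since $A^V$ is predictable and non-decreasing, the set $D=\{\Delta A^V>0\}$ is predictable and thin, so there is a sequence $(T_n)$ of predictable times with pairwise disjoint graphs such that $D=\bigcup_n\lc T_n\rc$. Because $\nu^V(\{t\}\times\d v)=F^V_t(\d v)\,\Delta A^V_t$, the compensator charges a time $t$ if and only if $t\in D$; in particular, for any predictable time $T$ with $T\notin D$ one has $\nu^V(\{T\}\times\Cx^m)=0$, hence $\Delta V_T=0$ almost surely. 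This already pins down where the predictable jumps can live.

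Next I would set
$$V^{\ddp} := (\id\,\indicator{D})\star\mu^V,\qquad V^{\q} := V-V_0-V^{\ddp}.$$
To make sense of $V^{\ddp}$ I must verify $\id\,\indicator{D}\in L_\sigma(\mu^V)$ via Proposition~\ref{P:190330}. Condition~\ref{P:190330.ii}(a) is immediate, since $\abs{\id\,\indicator{D}}^2*\mu^V\le\abs{\id}^2*\mu^V=\sum_{s\le\cdot}\abs{\Delta V_s}^2<\infty$. The substance is condition~\ref{P:190330.ii}(b), namely $\id\,\indicator{D}\,\indicator{\{\abs{\id}\le1\}}\in L_\sigma(\nu^V)$, which I would check through the criterion of Proposition~\ref{L:190405}. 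Its first part holds because, at each $t\in D$, one has $\int\abs{v}\indicator{\{\abs{v}\le1\}}\nu^V(\{t\}\times\d v)/\Delta A^V_t\le 1/\Delta A^V_t<\infty$, while off $D$ the integrand vanishes.

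I expect the main obstacle to be the second condition of Proposition~\ref{L:190405},
$$\int_0^\cdot\left\lvert\int\id\,\indicator{D}\,\indicator{\{\abs{\id}\le1\}}(v)\,F^V_t(\d v)\right\rvert\,\d A^V_t<\infty,$$
because the naive sum of small jumps at the $T_n$ need not have finite variation --- precisely the phenomenon flagged in the introduction. The resolution is to recognise the inner integral against $\d A^V$ as the jumps of the truncated first characteristic: at $T_n$ one has $\int\id\,\indicator{\{\abs{\id}\le1\}}\,\nu^V(\{T_n\}\times\d v)=\Delta B^{V[1]}_{T_n}$, so the displayed expression equals $\sum_{T_n\le\cdot}\abs{\Delta B^{V[1]}_{T_n}}$, which is finite because $B^{\hat V[1]}$ has locally bounded total variation (indeed $\sum_{i=1}^{2m}{\rm TV}(B^{\hat V[1]}_i)\le A^V$ by construction). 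Hence $\id\,\indicator{D}\in L_\sigma(\mu^V)$, and Proposition~\ref{P:190911} yields $V^{\ddp}\in\V^{\d}_\sigma$.

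It remains to verify the stated properties. Using the explicit decomposition~\eqref{eq:190405.1} I would compute the jumps of $V^{\ddp}$: at $t\in D$ the contributions of the $*(\mu^V-\nu^V)$ and $\star\nu^V$ terms cancel, leaving $\Delta V^{\ddp}=\Delta V$ on $D$, whereas off $D$ both $\xi$ and $\nu^V(\{t\}\times\cdot)$ vanish, so $\Delta V^{\ddp}=0$. Thus $V^{\ddp}$ jumps only at the predictable times $T_n$, and $\Delta V^{\q}=\Delta V\,\indicator{D^c}$. For any predictable time $T$ this gives $\Delta V^{\q}_T=0$ almost surely: on $\{T\in D\}$ the indicator kills it, and on $\{T\notin D\}$ the first paragraph gives $\Delta V_T=0$; hence $V^{\q}$ is quasi-left-continuous. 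Orthogonality follows at once, since $V^{\ddp}\in\V^{\d}_\sigma$ is pure-jump, so $[V^{\q},V^{\ddp}]=\sum_{s\le\cdot}\Delta V^{\q}_s\,\Delta V^{\ddp}_s=0$, the jump supports being disjoint. For uniqueness I would subtract two such decompositions: the difference is simultaneously quasi-left-continuous and jumping only at predictable times, hence has no jumps at all, and a continuous element of $\V^{\d}_\sigma$ starting at $0$ must vanish.
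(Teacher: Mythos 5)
Your proof is correct, and it produces exactly the decomposition that the paper records immediately after the statement, namely $V^{\q}=\indicator{\H_V}\cdot V$ and $V^{\ddp}=\indicator{\H_V^c}\cdot V$: your set $D=\{\Delta A^V>0\}$ coincides with $\H_V^c=\left\{\nu^V(\{\cdot\})\neq0\right\}$, since the jump-measure portion of $\Delta A^V$ is positive precisely where $\nu^V$ charges the time, while the total-variation portion can jump only at such times as well (by the identity $\Delta B^{V[1]}_t=\int v\,\indicator{\{\abs{v}\leq 1\}}\,\nu^V(\{t\}\times\d v)$); moreover, by the uniqueness you establish, your $(\id\,\indicator{D})\star\mu^V$ is the same process as $\indicator{\H_V^c}\cdot V$. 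Where you genuinely differ is in the construction and verification: the paper delegates the proof to \cite[Proposition~3.15]{cr0} and phrases the two pieces as stochastic integrals of predictable indicators against $V$, which ties the decomposition directly into the calculus of Proposition~\ref{P:1_J}\ref{P:1_J:iii}, whereas you build $V^{\ddp}$ from scratch as a sigma-localized jump-measure integral and check $\id\,\indicator{D}\in L_\sigma(\mu^V)$ by hand through Propositions~\ref{P:190330} and \ref{L:190405}, after which $V^{\ddp}\in\V^{\d}_\sigma$ is immediate from Proposition~\ref{P:190911}. The payoff of your route is that it isolates the one genuinely delicate point --- why the compensated small jumps at the predictable times $(T_n)_{n\in\N}$ are summable, exactly the finite-variation failure mode flagged in the introduction --- and resolves it by identifying $\int v\,\indicator{\{\abs{v}\leq 1\}}\,\nu^V(\{T_n\}\times\d v)$ with $\Delta B^{V[1]}_{T_n}$ and bounding the resulting sum by the total-variation terms already built into $A^V$. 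Two steps are left implicit but are routine: in the quasi-left-continuity argument, the claim of your first paragraph must be applied on the event $\{T\in D^c\}$ of a general predictable time $T$, which requires restricting $T$ to that event (legitimate, because $\indicator{D}(T)$ is $\sigalg{F}_{T-}$-measurable, so the restriction is again a predictable time); and your bound in the first Kallsen condition silently uses the standard fact that the mass $\nu^V(\{t\}\times\d v)$ of the compensator at any fixed time is at most one.
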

If we define the predictable set
$	\H_V = \left\{\nu^V(\{\cdot\})=0\right\}$,
then indeed $V^{\q} = \indicator{\H_V} \cdot V$ and $V^{\ddp} = \indicator{\H_V^c}\cdot V$. Hence $V$ is special if and only if both $V^\q$ and $V^\ddp$ are special.

Let $\mathcal{T}_V$ denote a countable family of stopping times that exhausts the jumps of $V^\ddp$.\footnote{Note that $\P[\Delta V_\tau = 0] > 0$ is possible for $\tau \in \mathcal T_V$.} For each $V$ there may be many ways to choose $\mathcal{T}_V$. The following statement holds for any such $\mathcal{T}_V$.

\begin{proposition}[Drift of a pure-jump process jumping only at predictable times]\label{P:190520}
Assume that  $V^\ddp$ is special. Then we have
\begin{equation*}
B^{V^\ddp}  = \sum_{\tau\in\mathcal{T}_V}  \E_{\tau-}[\Delta V_\tau]\indicator{\lc\tau,\infty \lc}.
\end{equation*}
\end{proposition}
\begin{proof}
Thanks to \eqref{eq:190405.1}, we have $B^{V^\ddp} = \id  \star \nu^{V^\ddp}$. Moreover, $B^{V^\ddp}$ is of sigma-finite variation and $\mathcal T_V$ exhausts its jumps. Proposition~4.6 in \cite{cr0} applied to $B^{V^\ddp}$ then yields the result.
\end{proof}

\section{Semimartingale representation} \label{S:3}
\subsection{Definition and basic properties}
From now on we shall fix some $d, n\in \N$ and consider a $\Cx^d$--valued semimartingale $X$.   We shall then study a variety of predictable transformations of $X$. Of course, an $\R^d$--valued semimartingale can always be considered a special case.

\begin{example}[A motivational example] \label{ex:181106}
	Let $X$ denote an $\R$--valued semimartingale and let $f: \R \rightarrow \R$ denote a twice continuously differentiable function.  Then it is well known that also the process $Y = f(X)$  is a semimartingale. More precisely, the It\^o--Meyer change of variables formula, \cite[I.4.57]{js.03},  provides the representation
\begin{equation}\label{eq:170702}
		Y =  f(X_0) + f'(X_{-}) \cdot X + \frac{1}{2} f''(X_{-}) \cdot [X,X]^c 
		            + \left(f(X_{-}  + \id) - f(X_{-} ) - f'(X_{-})\,\id\right) * \mu^X. 
\end{equation}

Let us now introduce the predictable function $\xi^{f, X}: \Omega \times [0,\infty) \times \R \rightarrow \R$ by
\begin{align*}
	\xi^{f, X}(\omega, t, x) =  f(X_{t-}(\omega)  + x) - f(X_{t-}(\omega)).
\end{align*}
Note that the derivatives $D\xi^{f, X}$ and $D^2 \xi^{f, X}$ exist. 
The representation in \eqref{eq:170702} then can be written in the more compact form
	\begin{align} \label{eq:170704}
		Y &= Y_0 +  D\xi^{f, X}(0) \cdot X+ \frac{1}{2}D^2\xi^{f, X}(0) \cdot [X,X]^c + \left(\xi^{f, X}- D\xi^{f, X}(0)\, \id \right) * \mu^X. 
	\end{align}	
Observe that $\Delta Y=\xi(\Delta X)$ and that $Y$ is fully determined by $X$ and the predictable function $\xi^{f, X}$.
\qed
\end{example}
The connection between \eqref{eq:170702} and \eqref{eq:170704} motivates the key concept of this paper, Definition~\ref{D:181101} below. Recall from 
Subsection~\ref{SS:sigma} the predictable set $\H_X$, on which $X^\ddp$ has no `activity.'

\begin{definition}[Representing functions for a given semimartingale $X$]  \label{D:170714}
Let $\I^{n}(X)$ denote the set of all predictable functions $\xi: \widebar{\Omega}^d \rightarrow \Cinf^n$  such that the following properties hold.
	\begin{enumerate}[label={\rm(\arabic{*})}, ref={\rm(\arabic{*})}]
		\item\label{D_I(X):1}  $X\in \Dom(\xi)$, viewed as a property of $\xi$ for fixed $X$.
		\item\label{D_I(X):2}  $\xi(0) = 0$, $(\P \times  A^X)$--a.e.
		\item\label{D_I(X):3} $x \mapsto \indicator{\H_X} \xi( x)$ is twice real-differentiable at zero, $(\P \times  A^X)$--a.e.
		\item\label{D_I(X):4} $\indicator{\H_X} \hat D \xi(0) \in L(\hat X)$.
		\item\label{D_I(X):5} $\hat D^2 \xi(0) \in L([\hat{X},\hat{X}]^c)$.
		\item\label{D_I(X):6} $(\xi -   \indicator{\mathcal H_X} \hat  D  \xi(0) \, \hat \id) \in L_\sigma(\mu^{{X}})$.
	\end{enumerate}
We write $\I(X)=\bigcup_{k\in\N}\I^k(X)$.\qed
\end{definition}

\begin{remark}[The role of the predictable set $\H_X$] \label{R:200518.1}
	If a predictable function $\xi$ satisfies the conditions of Definition~\ref{D:170714} with $\H_X$ replaced by a larger predictable set $\H \supset \H_X$ (e.g., $\H = \Omega \times [0, \infty)$, corresponding to no indicators at all), then the conclusion $\xi \in \I(X)$ still holds.  To see this, we only need to argue \ref{D_I(X):6}.  This follows from observing that we have
	$\indicator{\H \setminus \H_X} \hat D \xi(0) \in L(\hat X)$, yielding $\indicator{\H \setminus \H_X}  \hat D \xi(0) \,\hat \id \in L_\sigma(\mu^{\hat{X}})$ by Proposition~\ref{P:190411}.
	
	 Example~\ref{E:190916} below provides an instance where $X = X^\ddp$, $\xi \in \I(X)$,   $\xi$ is  twice differentiable at zero, but $D \xi(0) \notin L(X)$. Thus, allowing for the existence of an appropriate predictable set $\mathcal H_X$ such that only $\indicator{\mathcal H_X} D \xi(0) \in L(X)$ is required, indeed allows for a bigger class $\I(X)$.
	\qed
\end{remark}

As Propositions~\ref{P: universal} and \ref{P:Ito} and Theorem~\ref{T:composition0} below argue, the following class $\Uni$ enjoys closedness with respect to common operations and universality in the sense that a representing function $\xi\in\Uni$ satisfies $\xi\in\I(X)$ for \emph{any} semimartingale $X$ provided that $\xi(\Delta X)$ is finite.
\begin{definition}[Universal representing functions] \label{D:210911}
Let $\Uni^{n}$ denote the set of all predictable functions $\xi: \widebar{\Omega}^d \rightarrow \Cinf^n$ such that the following properties hold, $\P$--almost surely.
\begin{enumerate}[label={\rm(\arabic{*})}, ref={\rm(\arabic{*})}]
			\item\label{I0bis:i}   $\xi_t(0) = 0$, for all $t \geq 0$.
			\item\label{I0bis:ii}  $x \mapsto \xi_t(x)$ is twice real-differentiable at zero, 	for all $t \geq 0$.
			\item\label{I0bis:iii} $\hat D \xi(0)$ and $\hat D^2 \xi(0)$ are locally bounded.
			\item\label{I0bis:iv} There is a predictable locally bounded process $K>0$ such that 
	$$\sup_{\abs{x}\leq \sfrac{1}{K}} \frac{\mathopen{\bigs|} \xi(x)-\hat D \xi(0)\hat{\id}(x)\mathclose{\bigs|}}{\abs{x}^2}\indicator{x\neq 0} \text{ is locally bounded.}$$
	\end{enumerate}
We write $\Uni=\bigcup_{n\in\N}\Uni^{n}$.\qed
\end{definition}

\begin{remark}[A special case: real-valued semimartingales]
If $X$ is real-valued then we may consider $\xi$ as a predictable function with real domain. In this case, it can be easily checked that in Definitions~\ref{D:170714} and \ref{D:210911} we may omit the  hats on top of $D$, $\id$, and $X$, with $D$ and $D^2$ being the standard gradient and Hessian, respectively.
	\qed
\end{remark}

\begin{proposition}[Universality of $\Uni$]\label{P: universal}
Fix some $\xi \in \Uni$ such that $X\in\Dom(\xi)$. We then have $\xi \in \I^n(X)$, $(\xi - \hat D \xi(0) \hat \id )\in L(\mu^X)$, and
$$ \left(\xi - \hat D \xi(0) \hat \id \right) \star \mu^{X} = \left(\xi - \hat D \xi(0) \hat \id \right) * \mu^{X}.$$
\end{proposition}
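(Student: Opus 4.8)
The plan is to verify the six defining properties of $\I^n(X)$ from Definition~\ref{D:170714} in turn, proving the stronger integrability $g\in L(\mu^X)$ (where I abbreviate $g:=\xi-\hat D\xi(0)\hat\id$) as I go, and then to read off the coincidence of the $\star$ and $*$ integrals. Properties \ref{D_I(X):1}--\ref{D_I(X):3} are essentially free: \ref{D_I(X):1} is the standing hypothesis $X\in\Dom(\xi)$, while \ref{D_I(X):2} and \ref{D_I(X):3} are exactly \ref{I0bis:i} and \ref{I0bis:ii} of $\Uni$ (inserting the indicator $\indicator{\H_X}$ only makes differentiability easier to have). For \ref{D_I(X):4} and \ref{D_I(X):5} I would invoke property~\ref{I0bis:iii}: since $\hat D\xi(0)$ and $\hat D^2\xi(0)$ are locally bounded predictable processes, they are integrable against any semimartingale and against any continuous finite-variation process respectively, so $\hat D\xi(0)\in L(\hat X)$ and $\hat D^2\xi(0)\in L([\hat X,\hat X]^c)$ (again the indicator only helps). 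This isolates \ref{D_I(X):6} as the one substantive property.

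To establish $g\in L(\mu^X)$, i.e.\ $\abs{g}*\mu^X<\infty$, I would split the jumps of $X$ at the threshold $1/K$ supplied by property~\ref{I0bis:iv}, writing $\abs{g}*\mu^X=\abs{g}\indicator{\abs{\id}\le 1/K}*\mu^X+\abs{g}\indicator{\abs{\id}> 1/K}*\mu^X$, and localize by stopping times $(\tau_k)$ chosen to bound simultaneously $K$ and the locally bounded supremum $C:=\sup_{\abs{x}\le 1/K}\abs{g(x)}\abs{x}^{-2}\indicator{x\neq 0}$ from \ref{I0bis:iv}. On the small-jump set the quadratic bound gives $\abs{g(\Delta X_s)}\le C_s\abs{\Delta X_s}^2$, so up to $\tau_k$ the first sum is dominated by a constant multiple of $\sum_{s}\abs{\Delta X_s}^2=\bigs(\abs{\id}^2*\mu^X\bigs)_{\tau_k}\le \tr[\hat X,\hat X]_{\tau_k}<\infty$. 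On the large-jump set, boundedness of $K$ up to $\tau_k$ forces the relevant jumps to exceed a fixed positive level, of which a càdlàg path has only finitely many on each compact interval; on each such jump $g(\Delta X_s)=\xi(\Delta X_s)-\hat D\xi(0)\,\Delta\hat X_s$ is finite because $X\in\Dom(\xi)$ and $\hat D\xi(0)$ is finite-valued, so the second sum is finite. Letting $k\to\infty$ yields $\abs{g}*\mu^X<\infty$ pathwise, hence $g\in L(\mu^X)$.

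Since $L(\mu^X)\subset L_\sigma(\mu^X)$, property~\ref{D_I(X):6} holds even after enlarging $\H_X$ to all of $\Omega\times[0,\infty)$; together with the (indicator-free, hence easier) versions of \ref{D_I(X):3} and \ref{D_I(X):4} verified above, Remark~\ref{R:200518.1} then delivers $\xi\in\I^n(X)$. Finally, for $g\in L(\mu^X)$ the equality $g\star\mu^X=g*\mu^X$ is immediate from Definition~\ref{D:190405}: the constant localizing sequence $C_k=\Omega\times[0,\infty)$ together with $Y=g*\mu^X$ (a finite-variation semimartingale precisely because $\abs{g}*\mu^X<\infty$) satisfies the defining relation, whence $g\star\mu^X=Y=g*\mu^X$. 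I expect the genuine obstacle to sit in the middle paragraph, namely arranging the localization so that the two locally bounded quantities $K$ and $C$ and the finite-variation control $\sum\abs{\Delta X_s}^2<\infty$ are all tamed on one common stopping-time sequence; everything else reduces to routine consequences of local boundedness.
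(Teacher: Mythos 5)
Your proposal is correct and takes essentially the same route as the paper's own (very terse) proof: the paper likewise obtains $\xi\in\I^n(X)$ by checking Definition~\ref{D:170714} with the indicators removed and invoking Remark~\ref{R:200518.1}, and settles the second claim by noting $\abs{\xi - \hat D \xi(0)\,\hat\id}*\mu^X < \infty$ ``by localization,'' which is precisely the small-jump/large-jump splitting (quadratic bound via property~\ref{I0bis:iv} plus finitely many large jumps) that you spell out. You have simply filled in the details the paper leaves implicit.
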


\begin{proof}
	The first claim follows from Remark~\ref{R:200518.1}. For the second claim it suffices to observe that $\abs{\xi - \hat D \xi(0) \hat \id} * \mu^{X} < \infty$ by localization.
\end{proof}

\begin{proposition}[Properties of $\I(X)$] \label{P:1_J}
The following statements hold.
\begin{enumerate}[label={\rm(\arabic{*})}, ref={\rm(\arabic{*})}]
\item\label{P:1_J:i} If  $\xi, \psi \in \I^n(X)$ for some $n \in \N$ and $\lambda \in \Cx$ then $\xi + \lambda \psi \in \I^n(X)$.
\item\label{P:1_J:ii} If $X \in \V^{\d}_\sigma$  then  $\I(X) \subset L_\sigma(\mu^X)$.  Moreover, if $X = X^\ddp$ then  $\I(X) = L_\sigma(\mu^X)$.
 \item\label{P:1_J:iii} Let $\mathcal{H}$ denote a predictable set. Then  
$\I(X)=\I(\indicator{\mathcal H} \cdot X)\cap\I(\indicator{\mathcal H^c} \cdot X)$; in particular, 
$\I(X) = \I(X^\q) \cap \I(X^\ddp)$.
 \item\label{P:1_J:iv}  Let  $Y$ denote another semimartingale  and let $\psi \in \I(X,Y)$. If $\psi$ is constant in the $y$--argument then $\xi: x \mapsto \psi(x,0)$ is in $\I(X)$.
\end{enumerate}

\end{proposition}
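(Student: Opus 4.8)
The plan is to verify, part by part, the six defining conditions~\ref{D_I(X):1}--\ref{D_I(X):6} of Definition~\ref{D:170714} directly. Two structural facts will be used throughout. First, the classes $L(\hat X)$, $L([\hat X,\hat X]^c)$ and $L_\sigma(\mu^X)$ are $\Cx$--vector spaces that localize over disjoint predictable sets, i.e.\ membership holds if and only if it holds after multiplication by $\indicator{\mathcal H}$ and by $\indicator{\mathcal H^c}$, the $L_\sigma$ case following from Proposition~\ref{P:190411}. Second, real differentiation commutes with multiplication by a constant $\lambda\in\Cx$, so $\hat D(\xi+\lambda\psi)(0)=\hat D\xi(0)+\lambda\hat D\psi(0)$ and likewise for $\hat D^2$. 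Part~\ref{P:1_J:i} is then immediate: conditions \ref{D_I(X):1}--\ref{D_I(X):3} are preserved under complex linear combinations because finiteness, vanishing at $0$, and twice-differentiability at $0$ are, while \ref{D_I(X):4}--\ref{D_I(X):6} follow from the vector-space property of the three classes together with the linearity of $\hat D$ and $\hat D^2$.

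For Part~\ref{P:1_J:ii}, note that $X\in\V^{\d}_\sigma$ forces $[\hat X,\hat X]^c=0$ and, by Proposition~\ref{P:190911}, $\hat\id\in L_\sigma(\mu^X)$ with $\hat\id\star\mu^X=\hat X-\hat X_0$. Given $\xi\in\I(X)$, I would decompose $\xi=(\xi-\indicator{\H_X}\hat D\xi(0)\hat\id)+\indicator{\H_X}\hat D\xi(0)\hat\id$; the first term lies in $L_\sigma(\mu^X)$ by \ref{D_I(X):6}, and since $\indicator{\H_X}\hat D\xi(0)\in L(\hat X)=L(\hat\id\star\mu^X)$ by \ref{D_I(X):4}, Proposition~\ref{P:190411} places the second term in $L_\sigma(\mu^X)$ as well, giving $\I(X)\subset L_\sigma(\mu^X)$. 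When $X=X^\ddp$ one also has $X\in\V^{\d}_\sigma$ and, moreover, both $\d A^X$ and $\mu^X$ charge only $\H_X^c$; hence for $\xi\in L_\sigma(\mu^X)$ conditions \ref{D_I(X):3}--\ref{D_I(X):5} hold trivially (the factor $\indicator{\H_X}$ vanishes $(\P\times A^X)$--a.e.\ and $[\hat X,\hat X]^c=0$), the correction term in \ref{D_I(X):6} carries no $\mu^X$--mass so that \ref{D_I(X):6} reduces to the hypothesis, and \ref{D_I(X):1}, \ref{D_I(X):2} follow from $\abs{\xi}^2*\mu^X<\infty$ (Proposition~\ref{P:190330}) and the normalization $\xi(0)=0$; thus $L_\sigma(\mu^X)\subset\I(X)$.

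Part~\ref{P:1_J:iii} is the computational core. With $X'=\indicator{\mathcal H}\cdot X$ and $X''=\indicator{\mathcal H^c}\cdot X$, I would first record the relations between characteristics, namely $\hat{X'}=\indicator{\mathcal H}\cdot\hat X$, $[\hat{X'},\hat{X'}]^c=\indicator{\mathcal H}\cdot[\hat X,\hat X]^c$, $\mu^{X'}=\indicator{\mathcal H}\mu^X$, $\H_{X'}=\H_X\cup\mathcal H^c$, and $A^{X'}$ carried by $\mathcal H$ (with symmetric identities for $X''$), whence $\mathcal H\cap\H_{X'}=\mathcal H\cap\H_X$. By associativity of the integral ($\zeta\in L(\indicator{\mathcal H}\cdot\hat X)\iff\indicator{\mathcal H}\zeta\in L(\hat X)$, and its $\star\mu$ analogue) and the localization property, each of the six conditions for $\xi\in\I(X')$ rewrites as the corresponding $X$--condition carrying an extra factor $\indicator{\mathcal H}$; the decisive cancellation is $\indicator{\mathcal H}(\xi-\indicator{\H_{X'}}\hat D\xi(0)\hat\id)=\indicator{\mathcal H}(\xi-\indicator{\H_X}\hat D\xi(0)\hat\id)$, coming from $\mathcal H\cap\H_{X'}=\mathcal H\cap\H_X$. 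Doing the same for $X''$ with $\indicator{\mathcal H^c}$ and summing via $\indicator{\mathcal H}+\indicator{\mathcal H^c}=1$ yields $\xi\in\I(X)\iff\xi\in\I(X')\cap\I(X'')$. The ``in particular'' claim is the case $\mathcal H=\H_X$, since $X^\q=\indicator{\H_X}\cdot X$ and $X^\ddp=\indicator{\H_X^c}\cdot X$.

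Part~\ref{P:1_J:iv} is where I expect the main obstacle, which is not linearity but a mismatch of predictable sets. Setting $W=(X,Y)$, constancy of $\psi$ in $y$ gives the block structure $\hat D\psi(0)=[\hat D\xi(0)\ \ 0]$ and $\hat D^2\psi(0)$ supported on the $x$--block, so that $\hat D\psi(0)\cdot\hat W=\hat D\xi(0)\cdot\hat X$ and $\hat D^2\psi(0)\cdot[\hat W,\hat W]^c=\hat D^2\xi(0)\cdot[\hat X,\hat X]^c$; this transfers \ref{D_I(X):4}, \ref{D_I(X):5} from $W$ to $X$, while \ref{D_I(X):1} is read off from $\psi(\Delta W)=\xi(\Delta X)$. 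The genuine difficulty is that $\H_W=\H_X\cap\H_Y$ is in general strictly smaller than $\H_X$, so the indicator $\indicator{\H_W}$ appearing in $\psi\in\I(W)$ differs from the required $\indicator{\H_X}$ on $\H_X\cap\H_Y^c$. The key observation that resolves this is that $\H_X\cap\H_Y^c$ is, for each $\omega$, a countable set of predictable times lying inside the quasi-left-continuous region of $X$: there $\d A^X$ is diffuse and $X$ does not jump, so $(\P\times A^X)(\H_X\cap\H_Y^c)=0$, $\indicator{\H_X\cap\H_Y^c}\cdot\hat X=0$, and $\mu^X$ gives this set no mass. Since also $\d A^X\ll\d A^W$ (adjoining $Y$ only adds activity), the indicators $\indicator{\H_X}$ and $\indicator{\H_W}$ agree up to $(\P\times A^X)$--null and $\mu^X$--null sets, so conditions \ref{D_I(X):2}, \ref{D_I(X):3}, \ref{D_I(X):6} transfer from $W$ to $X$ as well, completing the argument.
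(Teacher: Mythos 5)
Your proof is correct and follows the same route as the paper's (very terse) own proof: a direct verification of the six conditions of Definition~\ref{D:170714}, with Proposition~\ref{P:190411} supplying the key step in parts~\ref{P:1_J:ii} and \ref{P:1_J:iii}. The one place where you go beyond the paper---which dismisses part~\ref{P:1_J:iv} as following ``directly'' from the definition---is your null-set resolution of the mismatch between $\H_{(X,Y)}$ and $\H_X$ on $\H_X \cap \H_Y^c$; this is a genuine subtlety and your treatment of it (the set is thin, predictable, carries no $\d A^X$-, $\mu^X$-, or $\nu^X$-mass, hence all indicator discrepancies are harmless) is sound and consistent with the conventions the paper itself uses.
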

\begin{proof}
Parts~\ref{P:1_J:i} and \ref{P:1_J:iv} follow directly from Definition~\ref{D:170714}. Parts~\ref{P:1_J:ii} and \ref{P:1_J:iii}  rely on  an application of Proposition~\ref{P:190411}.  
\end{proof}

\begin{definition}[Semimartingale representation]  \label{D:181101}
For a predictable function $\xi \in \I(X)$	we use the notation 
	\begin{align}  \label{eq:190610.2}
	\xi\circ X&= \indicator{\H_X}   \hat D \xi (0) \cdot \hat X+ \frac{1}{2} \hat D^2 \xi(0) \cdot \bigs[\hat X,\hat X\bigs]^c + \left(\xi - \indicator{\H_X}   \hat D \xi(0) \, \hat \id  \right) \star \mu^{X}.
	\end{align}	
If there exists $\xi \in \I(X)$ such that 
	\begin{align} \label{eq:170704.4}
		Y &=  Y_0 + \xi \circ X,
	\end{align}	 
we say that the semimartingale $Y$ is represented in terms of the semimartingale $X$. \qed
\end{definition}

\begin{remark}[\'Emery formula]
The right-hand side of equation \eqref{eq:190610.2} appears almost verbatim in \'Emery~\cite[eq.~(13)]{emery.78} in the special case where $X$ is real-valued and $\xi$ is a real-valued twice continuously differentiable time-constant and deterministic function; see also Proposition~\ref{P: universal}. In this case the $\star$ integral can be replaced by the standard $*$ integral. \qed
\end{remark}

\begin{remark}[Interpretation of $\xi\circ X$ as $\xi$--variation]\label{R:predictable variation}
The object $\xi\circ X$ with time-constant deterministic $\xi$, most often a power function, resurfaces several times in the literature under the name $\xi$--variation, see Dol\'eans~\cite{doleans.69}, Monroe~\cite{monroe.72,monroe.76}, L\'epingle~\cite{lepingle.76}, Jacod~\cite{jacod.08}, and Carr and Lee~\cite{carr.lee.13}. The terminology and \'Emery's \cite{emery.78} notation $\int_0^\cdot \xi(\d X_s)$ originate from the fact that, for suitably regular time-constant deterministic $\xi$, the partial sums 
$ \sum_{n\in\N}\xi_{t_{n-1}} \left(X^{t_n}-X^{t_{n-1}}\right)$ 
converge uniformly on compact time intervals in probability to $\xi\circ X$ as the time partition $(t_n)_{n\in\N}$ becomes finer; see \cite[Th\'eor\`eme~2a]{emery.78} and \cite[Theorem~2.2]{jacod.08} for a related statement. For a precise statement of such convergence for predictable $\xi$, see \cite{crV}.\qed
\end{remark}

\begin{remark}[Generalizations of \'Emery formula] \label{R:200521}
The conditions in Definition~\ref{D:170714} ensure that all terms in \eqref{eq:170704.4} are defined.  One could extend the class $\I(X)$ further. The idea of such generalisation would be to focus  on the activity of the individual components of $X$. For example, one could abstain from requesting that $x \mapsto \indicator{\H_X} \xi(x)$ is real-differentiable in the $i$--th component for times when 
$\d A^{X^{(i)}} = 0$.  Moreover, one could assume that the second real derivative of $x \mapsto \indicator{\H_X} \xi(x)$ only needs to exist $(\P \times \tr [\hat X, \hat X]^c)$--a.e. However, such  generalisations would come with more complicated notation and would obscure the main results, hence we do not pursue them here.
\qed
\end{remark}

\begin{remark}[Measure invariance of representations]\label{R:181106}
Note that $\I(X)$ is invariant under equivalent changes of measures. More precisely, if $\Qu$ is a probability measure absolutely continuous with respect to $\P$ and if $\xi \in  \I(X)$ under $\P$, then also $\xi \in \I(X)$ under $\Qu$ (recall Remark~\ref{R:190405} to see this). Moreover, if we define $Y = \xi \circ X$ under $\P$, then we also have $Y = \xi \circ X$ under $\Qu$. Hence,  $\xi\circ X$ is  measure-invariant in the sense that \eqref{eq:190610.2}  only depends on the null sets. A similar statement holds for $\Uni$. This is in contrast to the common (and frequently also very useful) representation of $Y$ in terms of predictable characteristics.  
\qed
\end{remark}

We now list some immediate consequences of the definition of representability.

\begin{proposition}[Properties of representation]\label{P:1}
The following statements hold.
\begin{enumerate}[label={\rm(\arabic{*})}, ref={\rm(\arabic{*})}]
	\item\label{P:1:i} Let $\xi\in\I(X)$.
	Then
	\begin{align*}
		 \Delta (\xi \circ X) &=\xi(\Delta X). 
	\end{align*}
  \item\label{P:1:ii} $\I(X)=\I(X-X_0)$ and for any $\xi\in\I(X)$ one has 
	$$\xi\circ X = \xi\circ (X-X_0).$$ 
	\item\label{P:1:iii}  We have $\id_i, \id_i \id_j  \in \Uni^1$, for all $i,j \in \{1, \cdots, d\}$, with
		\begin{align*}
			X^{(i)}        &= X^{(i)}_0+\id_i\circ X;\\
			\bigs[X^{(i)}, X^{(j)}\bigs] &= (\id_i \id_j)\circ X.
		\end{align*}
	\item\label{P:1:iv}  If $\xi \in \I(X)$ then $\xi^* \in \I(X)$ and $(\xi \circ X)^* = \xi^* \circ X$, where  the superscript $*$ denotes the complex conjugate. 
	\item\label{P:1:v}  If $\xi \in \I(X)$ then $\xi \circ X^\q = (\xi \circ X)^\q$ and $\xi \circ X^\ddp = (\xi \circ X)^\ddp$. (Recall also Proposition~\ref{P:1_J}\ref{P:1_J:iii}).
	\item\label{P:1:vi} Let $Y$ be a predictable semimartingale of finite variation and $\xi \in \I(X,Y)$ such that 
	$\xi(\cdot, \Delta Y) \in \I(X)$ and 
	$\xi(0, \cdot) \in \I(Y)$. Then we have 
	\begin{align} \label{eq:200616}
		\xi \circ (X,Y)=\xi(\cdot, \Delta Y) \circ X + \xi(0, \cdot) \circ Y.
	\end{align}
\end{enumerate}
\end{proposition}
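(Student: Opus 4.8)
The plan is to prove all six parts by unwinding the defining formula \eqref{eq:190610.2} term by term, reusing a single jump identity that I would establish first in Part~\ref{P:1:i}. There I would analyse the three summands of \eqref{eq:190610.2} separately: the stochastic integral $\indicator{\H_X}\hat D\xi(0)\cdot\hat X$ has jump $\indicator{\H_X}\hat D\xi(0)\,\Delta\hat X$; the term $\tfrac12\hat D^2\xi(0)\cdot[\hat X,\hat X]^c$ is continuous; and the $\star$--integral has jump $\xi(\Delta X)-\indicator{\H_X}\hat D\xi(0)\hat\id(\Delta X)$, which follows from the decomposition \eqref{eq:190405.1} together with the standard jump rules for the $*$--integrals appearing on its right-hand side. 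Since $\hat\id(\Delta X)=\Delta\hat X$, the two $\indicator{\H_X}\hat D\xi(0)$ contributions cancel and one is left with $\Delta(\xi\circ X)=\xi(\Delta X)$.

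Part~\ref{P:1:ii} follows because $X$ and $X-X_0$ have identical jumps (hence identical $\mu^X$, $\nu^X$, $A^X$ and $\H_X$) and identical continuous quadratic variation, while $\hat X$ and $\widehat{X-X_0}$ differ only by the constant $\hat X_0$, so the integrals $\zeta\cdot\hat X$ coincide; thus every clause of Definition~\ref{D:170714} and every term of \eqref{eq:190610.2} is unchanged. Part~\ref{P:1:iv} uses that complex conjugation is the constant real--linear map $v\mapsto v^*$, whence $\hat D(\xi^*)(0)=\overline{\hat D\xi(0)}$ and $\hat D^2(\xi^*)(0)=\overline{\hat D^2\xi(0)}$; since $\hat X$, $[\hat X,\hat X]^c$ and $\mu^X$ are real objects, conjugation commutes with all three integrals in \eqref{eq:190610.2}, yielding simultaneously $\xi^*\in\I(X)$ and $(\xi\circ X)^*=\xi^*\circ X$.

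For Part~\ref{P:1:iii} I would first verify membership directly from Definition~\ref{D:210911}: $\id_i$ is complex--linear, so $\hat D\id_i(0)\hat\id=\id_i$, $\hat D^2\id_i(0)=0$, and the remainder in \ref{I0bis:iv} vanishes, while $\id_i\id_j$ is quadratic with $\hat D(\id_i\id_j)(0)=0$, constant Hessian, and remainder bounded by $\abs{x_ix_j}/\abs{x}^2\le 1$. Universality (Proposition~\ref{P: universal}) then gives $\id_i,\id_i\id_j\in\I(X)$. Evaluating \eqref{eq:190610.2}: for $\id_i$ only the first and third terms survive and, after splitting $X=X^\q+X^\ddp$ and using $X^\ddp=X^\ddp_0+\id\star\mu^{X^\ddp}$ from Proposition~\ref{P:190911}, they reassemble into $X^{(i)}-X^{(i)}_0$; for $\id_i\id_j$ only the second and third terms survive, the $\star$--integral giving $\sum_{s\le\cdot}\Delta X^{(i)}_s\Delta X^{(j)}_s$ and the Hessian contraction $\tfrac12\hat D^2(\id_i\id_j)(0)\cdot[\hat X,\hat X]^c$ reducing, by the bilinear definition of the complex quadratic variation, to $[X^{(i)},X^{(j)}]^c$, so that their sum is $[X^{(i)},X^{(j)}]$.

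Parts~\ref{P:1:v} and~\ref{P:1:vi} are the delicate ones. For Part~\ref{P:1:v} I would invoke Proposition~\ref{P:1_J}\ref{P:1_J:iii} to get $\xi\in\I(X^\q)\cap\I(X^\ddp)$, then show that \eqref{eq:190610.2} splits additively along Proposition~\ref{P:190729}: the continuous--variation term lives entirely in $X^\q$ (as $[\widehat{X^\ddp},\widehat{X^\ddp}]^c=0$), while $\H_{X^\q}=\Omega\times[0,\infty)$ and $\indicator{\H_{X^\ddp}}\cdot\widehat{X^\ddp}=0$; since $\xi\circ X^\q$ is quasi--left--continuous and $\xi\circ X^\ddp\in\V^{\d}_\sigma$ jumps only at predictable times, the uniqueness in Proposition~\ref{P:190729} identifies them with $(\xi\circ X)^\q$ and $(\xi\circ X)^\ddp$. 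For Part~\ref{P:1:vi}, $Y$ predictable of finite variation gives $Y=Y^\ddp\in\V^{\d}_\sigma$, $[\hat Y,\hat Y]^c=[\hat X,\hat Y]^c=0$ and $\indicator{\H_{(X,Y)}}\cdot\hat Y=0$, so the second-- and first--order terms of $\xi\circ(X,Y)$ involve only $X$ and agree with those of $\xi(\cdot,\Delta Y)\circ X$ because $\Delta Y=0$ on the quasi--left--continuous support carrying $[\hat X,\hat X]^c$ and $\indicator{\H_X}\cdot\hat X$; it then remains to split $\xi\star\mu^{(X,Y)}$ according to whether $Y$ jumps.

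I expect this last jump bookkeeping to be the main obstacle. Computing jumps via Part~\ref{P:1:i}, the naive sum $\Delta(\xi(\cdot,\Delta Y)\circ X)+\Delta(\xi(0,\cdot)\circ Y)=\xi(\Delta X,\Delta Y)+\xi(0,\Delta Y)$ over--counts $\Delta(\xi\circ(X,Y))=\xi(\Delta X,\Delta Y)$ by the amount $\xi(0,\Delta Y)$. The identity can hold only because the hypothesis $\xi(\cdot,\Delta Y)\in\I(X)$ forces, through clause~\ref{D_I(X):2} of Definition~\ref{D:170714}, that $\xi(0,\Delta Y)=0$ holds $(\P\times A^X)$--a.e.; this precisely cancels the over--count at the predictable times where $X$ and $Y$ jump together, while at the predictable times where only $Y$ jumps the term $\xi(\cdot,\Delta Y)\circ X$ is insensitive to the jump (since $\Delta X=0$ and $A^X$ charges no mass there) and $\xi(0,\cdot)\circ Y$ supplies $\xi(0,\Delta Y)$. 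Making this simultaneous--jump accounting airtight, rather than the otherwise routine term-by-term matching, is where the real work lies.
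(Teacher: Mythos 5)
Your proposal is correct and takes essentially the same route as the paper: each part is obtained by unwinding the \'Emery formula \eqref{eq:190610.2} term by term (parts \ref{P:1:i}--\ref{P:1:v} being direct consequences of Definitions~\ref{D:170714} and \ref{D:181101} together with Propositions~\ref{P: universal}, \ref{P:190911}, and \ref{P:190729}), and part \ref{P:1:vi} by matching the $\hat X$--integral, quadratic-variation, and $\hat Y$--integral terms and then comparing jumps via part \ref{P:1:i}, which is exactly the structure of the paper's much terser proof. Your explicit resolution of the over-count $\xi(0,\Delta Y)$ at simultaneous jump times --- killed by clause \ref{D_I(X):2} of Definition~\ref{D:170714} applied to $\xi(\cdot,\Delta Y)\in\I(X)$, since $A^X$ has an atom wherever $X$ has predictable jump activity --- is precisely the cancellation the paper leaves implicit in ``comparing the jumps''; the only slight imprecision is your claim that $A^X$ charges no mass at times where only $Y$ jumps (it may, when $X$ has unrealized jump activity there), but at such times the same clause forces the jump contributions on both sides to vanish, so the argument is unaffected.
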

\begin{proof}
Parts \ref{P:1:i}, \ref{P:1:ii},  \ref{P:1:iv}, and \ref{P:1:v} follow directly from  Definitions~\ref{D:170714} and \ref{D:181101}.
Part~\ref{P:1:iii} follows directly from Proposition~\ref{P: universal} and Definition~\ref{D:181101}.  

For \ref{P:1:vi}, note that  $\xi(\cdot, \Delta Y) \in \I(X)$ yields that $\indicator{\H_{X,Y}}   \hat D_x \xi (0, 0) \in L(\hat X)$ and
$\hat D^2_{xx} \xi(0,0) \in L([X,X]^c)$ with
\begin{align*}
	\indicator{\H_X}   \hat D_x \xi (0, \Delta Y) \cdot \hat X &=
		 \indicator{\H_{X,Y}}   \hat D_x \xi (0, 0) \cdot \hat X
	;\\ 
	\hat D^2_{xx} \xi(0, \Delta Y) \cdot \bigs[\hat X,\hat X\bigs]^c  &= \hat D^2_{xx} \xi(0,0) \cdot \bigs[\hat X,\hat X\bigs]^c.
\end{align*}
Similarly 
$\xi(0, \cdot) \in \I(Y)$ yields that $\indicator{\H_{X,Y}}   \hat D_y \xi (0, 0) \in L(\hat Y)$  with
\begin{align*}
	\indicator{\H_Y}   \hat D_y \xi (0, 0) \cdot \hat Y &=
		 \indicator{\H_{X,Y}}   \hat D_y \xi (0, 0) \cdot \hat Y.
\end{align*}
Now, the result follows by comparing the jumps on the left and right hand side of \eqref{eq:200616}, for example by using \ref{P:1:i}.
\end{proof}

\begin{proposition}[Representation of stochastic integrals]\label{P:integral}
Let $\zeta$ be a $\mathbb{C}^{1\times d}$--valued predictable process in $L(X)$. Then $\zeta\id \in \I^1(X)$ and 
\[
\zeta\cdot X= \zeta\id \circ X.
\]
\end{proposition}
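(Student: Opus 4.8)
The plan is to set $\xi:=\zeta\id$, i.e.\ the predictable function $(\omega,t,x)\mapsto\zeta_t(\omega)x$, and to verify the six defining properties of $\I^1(X)$ from Definition~\ref{D:170714} directly, after which $\zeta\id\circ X$ can be read off from \eqref{eq:190610.2}. Since $\xi$ is $\Cx$--linear in $x$, most conditions are automatic, so first I would record the two structural identities that drive everything. By the computation in \eqref{eq:190109.2} the lifted map $x\mapsto\hat\id(\zeta_t x)$ is linear with matrix $\zeta\otimes[1\ i]$; hence $\hat D\xi(0)=\zeta\otimes[1\ i]$ is constant in $x$, $\hat D^2\xi(0)=0$, and moreover $\hat D\xi(0)\,\hat\id=\zeta\id=\xi$ identically. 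With these in hand, conditions \ref{D_I(X):1}--\ref{D_I(X):3} and \ref{D_I(X):5} are immediate: $\xi(0)=0$; $\xi$ is smooth because it is linear; $\xi(\Delta X)=\zeta\,\Delta X$ is finite since $\zeta$ is $\Cx^{1\times d}$--valued $(\P\times A^X)$--a.e.; and $\hat D^2\xi(0)=0\in L([\hat X,\hat X]^c)$ trivially.

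For condition \ref{D_I(X):4} I would use that, by definition \eqref{eq:190109.1}, $\zeta\in L(X)$ means precisely $\zeta\otimes[1\ i]\in L(\hat X)$, and since $\indicator{\H_X}$ is predictable and bounded, $\indicator{\H_X}\hat D\xi(0)=\indicator{\H_X}(\zeta\otimes[1\ i])$ stays in $L(\hat X)$. The only condition requiring real work is \ref{D_I(X):6}. Using $\hat D\xi(0)\,\hat\id=\xi$, the integrand there collapses to $\xi-\indicator{\H_X}\hat D\xi(0)\,\hat\id=\indicator{\H_X^c}\zeta\id$, so I must show $\indicator{\H_X^c}\zeta\id\in L_\sigma(\mu^X)$. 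Here I would pass to the pure-jump part $X^\ddp=\indicator{\H_X^c}\cdot X\in\V^\d_\sigma$: Proposition~\ref{P:190911} gives $\id\in L_\sigma(\mu^{X^\ddp})$ and $X^\ddp=\id\star\mu^{X^\ddp}$; since $\indicator{\H_X^c}$ is locally bounded, $\zeta\in L(X)$ yields $\zeta\in L(X^\ddp)=L(\id\star\mu^{X^\ddp})$, whence Proposition~\ref{P:190411} delivers $\zeta\id\in L_\sigma(\mu^{X^\ddp})$ together with $\zeta\cdot(\id\star\mu^{X^\ddp})=(\zeta\id)\star\mu^{X^\ddp}$. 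Because $X^\ddp$ collects exactly the jumps of $X$ that occur on $\H_X^c$, integrating $\indicator{\H_X^c}\zeta\id$ against $\mu^X$ is the same object as integrating $\zeta\id$ against $\mu^{X^\ddp}$, which both establishes \ref{D_I(X):6} and hence $\zeta\id\in\I^1(X)$.

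I expect this last transfer --- identifying the $\star$--integral of $\indicator{\H_X^c}\zeta\id$ against $\mu^X$ with the $\star$--integral of $\zeta\id$ against $\mu^{X^\ddp}$ --- to be the main obstacle, since it rests on the precise relation between the random measures of $X$ and of its predictable-jump part and on the sigma-localized composition rule of Proposition~\ref{P:190411}, rather than on any naive $*$--integral manipulation (an ordinary stochastic integral of a finite-variation process need not be of finite variation). Once \ref{D_I(X):6} holds, I would finish by substituting $\hat D\xi(0)=\zeta\otimes[1\ i]$, $\hat D^2\xi(0)=0$, and $\xi-\indicator{\H_X}\hat D\xi(0)\,\hat\id=\indicator{\H_X^c}\zeta\id$ into \eqref{eq:190610.2}, obtaining
$$\zeta\id\circ X=(\zeta\otimes[1\ i])\cdot(\indicator{\H_X}\cdot\hat X)+(\zeta\id)\star\mu^{X^\ddp}=\zeta\cdot X^\q+\zeta\cdot X^\ddp.$$
Using $X^\q=\indicator{\H_X}\cdot X$, $X^\ddp=\indicator{\H_X^c}\cdot X$, and additivity of the integral, the right-hand side equals $\zeta\cdot(X^\q+X^\ddp)=\zeta\cdot(X-X_0)=\zeta\cdot X$, which is the asserted identity.
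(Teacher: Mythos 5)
Your proof is correct and takes essentially the same route as the paper: both rest on the computation $\hat D(\zeta\id)(0)=\zeta\otimes[1\ i]$, $\hat D^2(\zeta\id)(0)=0$, and $\zeta\id-\hat D(\zeta\id)(0)\,\hat\id=0$, after which the claim is read off the \'Emery formula \eqref{eq:190610.2} together with the definition \eqref{eq:190109.1} of complex-valued integration. The indicator bookkeeping you carry out by hand for condition \ref{D_I(X):6} (via $X^\ddp$, Proposition~\ref{P:190911}, and Proposition~\ref{P:190411}) is precisely the content of Remark~\ref{R:200518.1}, which the paper's proof invokes implicitly rather than re-deriving.
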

\begin{proof}
Let $I_d$ be a $d\times d$ identity matrix. Observe that $\xi =  \zeta\id$ verifies 
$$ \hat D\xi =\zeta\otimes[1\ i];
\qquad \hat D^2\xi =0;
\qquad \xi -  \hat D\xi(0) \,\hat \id = \zeta \left(\id -(I_d\otimes [1\ i])\, \hat\id\right) = 0.$$
Hence, $\xi$ belongs to $\I(X)$ as per Definition~\ref{D:170714}, and \eqref{eq:190610.2} together with \eqref{eq:190109.1} yield the claim.
\end{proof}

\begin{proposition}[Representation of a change of variables]\label{P:Ito}
Let $\U \subset \Cx^d$ be an open set such that $X_-, X \in \U$ and let $f: \U \rightarrow \Cx^n$ be  twice continuously real-differentiable.  
Then the predictable function $\xi^{f, X}:\widebar{\Omega}^d\to\Cinf^n$ defined by
\begin{align*}
	 \xi^{f,X}(x) = 
		\begin{cases}
			f\left(X_- + x\right) - f\left(X_-\right), &\quad X_- + x \in \U\\
			\NaN, &\quad X_- + x \notin \U
		\end{cases},
		\qquad x\in\Cx^d,
\end{align*}
belongs to $\Uni^{n}$,  $X\in\Dom(\xi^{f,X})$, and 
\[\pushQED{\qed}
f(X)=f(X_{0})+\xi^{f,X} \circ X.
\qedhere\popQED\]
\end{proposition}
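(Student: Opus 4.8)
The plan is to verify the three assertions in order: that $X\in\Dom(\xi^{f,X})$, that $\xi^{f,X}\in\Uni^n$, and finally the identity $f(X)=f(X_0)+\xi^{f,X}\circ X$, the last of which I would reduce to the classical It\^o--Meyer formula. Membership $X\in\Dom(\xi^{f,X})$ is immediate: since $X_{t-},X_t\in\U$ for every $t$, we have $\xi^{f,X}_t(\Delta X_t)=f(X_{t-}+\Delta X_t)-f(X_{t-})=f(X_t)-f(X_{t-})\in\Cx^n$, so the jumps of $X$ are compatible with $\xi^{f,X}$.

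The substance of the proof is establishing the four properties of Definition~\ref{D:210911}. The key preliminary is a localization keeping the path inside compact subsets of $\U$. Because $X$ is c\`adl\`ag and $X_t,X_{t-}\in\U$ for all $t$, for each fixed $\omega$ and $N$ the closure of $\{X_t,X_{t-}:t\in[0,N]\}$ is a compact subset of $\U$; choosing compacts $\U_k\uparrow\U$ with $\mathrm{dist}(\U_k,\partial\U)\geq 1/k$ and setting $S_k=\inf\{t:X_t\notin\U_k\}$ (a stopping time, since the filtration is right-continuous and $\U_k^c$ is open), one obtains $S_k\uparrow\infty$ a.s.\ with $X_-\in\U_k$ on $(0,S_k]$. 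Properties~\ref{I0bis:i} and~\ref{I0bis:ii} follow from $X_-\in\U$ (open) and the smoothness of $f$, the value $\NaN$ occurring only for $x$ bounded away from $0$ and hence irrelevant to these conditions at the origin. For~\ref{I0bis:iii} I identify $\hat D\xi^{f,X}(0)=\hat Df(X_-)$ and $\hat D^2\xi^{f,X}(0)=\hat D^2f(X_-)$ directly from the definitions in Subsection~\ref{SS:predictable}; since $\hat Df$ and $\hat D^2f$ are continuous they are bounded on each compact $\U_k$, so the stopped processes are bounded on $[0,S_k]$ and local boundedness follows.

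Property~\ref{I0bis:iv} is the delicate step and, I expect, the main obstacle, since it couples a quantitative Taylor estimate with the right choice of the gauge $K$. I would take $1/K=\tfrac12\,\mathrm{dist}(X_-,\partial\U)\wedge 1$, which is predictable (as $X_-$ is predictable and the distance is continuous), strictly positive (as $X_-\in\U$), and locally bounded (as $\mathrm{dist}(X_-,\partial\U)\geq 1/k$ on $[0,S_k]$). For $\abs{x}\leq 1/K$ the whole segment from $X_-$ to $X_-+x$ lies in a compact subset of $\U$, so Taylor's theorem for the $C^2$ map $f$, together with the isometry $\abs{\hat\id(x)}=\abs{x}$, gives
\[
\abs{\xi^{f,X}(x)-\hat D\xi^{f,X}(0)\,\hat\id(x)}\leq\tfrac12\sup_{\abs{y-X_-}\leq\abs{x}}\abs{\hat D^2f(y)}\;\abs{x}^2.
\]
On $[0,S_k]$ the supremum is dominated by $\sup_{\U_{k+1}}\abs{\hat D^2f}<\infty$, so the displayed ratio is locally bounded, which is~\ref{I0bis:iv}. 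Hence $\xi^{f,X}\in\Uni^n$.

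With $\xi^{f,X}\in\Uni^n$ and $X\in\Dom(\xi^{f,X})$ in hand, Proposition~\ref{P: universal} yields $\xi^{f,X}\in\I^n(X)$, $(\xi^{f,X}-\hat D\xi^{f,X}(0)\hat\id)\in L(\mu^X)$, and the coincidence of the $\star\mu^X$ and $\ast\mu^X$ integrals; together with Remark~\ref{R:200518.1} (replacing $\H_X$ by the full index set) the representing process~\eqref{eq:190610.2} reduces to
\[
\xi^{f,X}\circ X=\hat Df(X_-)\cdot\hat X+\tfrac12\hat D^2f(X_-)\cdot\bigs[\hat X,\hat X\bigs]^c+\left(\xi^{f,X}-\hat Df(X_-)\,\hat\id\right)\ast\mu^{X}.
\]
To finish, I apply the classical multidimensional It\^o--Meyer formula \cite[I.4.57]{js.03} (the $\R^{2d}$-valued analogue of~\eqref{eq:170702}, obtained by treating real and imaginary parts of each component) to the $C^2$ map $g=f(\hat\id^{-1})$ and the lifted semimartingale $\hat X$, using $\hat\id^{-1}(\hat X)=X$. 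Term by term, $Dg(\hat X_-)=\hat Df(X_-)$ and $D^2g(\hat X_-)=\hat D^2f(X_-)$; the jump integrand $g(\hat X_-+\hat\id(\cdot))-g(\hat X_-)-Dg(\hat X_-)\hat\id(\cdot)$ equals $\xi^{f,X}-\hat Df(X_-)\hat\id$ and, via the identification $h\ast\mu^X=h(\hat\id^{-1})\ast\mu^{\hat X}$, integrates exactly as the last term above; finally $g(\hat X)=f(X)$ and $g(\hat X_0)=f(X_0)$. This reproduces the displayed expression for $\xi^{f,X}\circ X$, whence $f(X)=f(X_0)+\xi^{f,X}\circ X$.
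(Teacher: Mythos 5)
Your proof is correct and follows essentially the same route as the paper's: both localize via the distance from $X_-$ to the boundary of $\U$ (the paper via its running infimum and the gauge $K=\sfrac{2}{R}$, you via hitting times of compacts $\U_k$ and $\sfrac{1}{K}=\frac{1}{2}\mathrm{dist}(X_-,\partial\U)\wedge 1$), verify the conditions of Definition~\ref{D:210911} through a Taylor estimate, and conclude from Proposition~\ref{P: universal} together with the classical It\^o--Meyer formula applied to the lifted process $\hat X$. The only (cosmetic) slip is that your Taylor supremum on $[0,S_k]$ is controlled on a compact like $\U_{2k}$ rather than $\U_{k+1}$, which changes nothing in the argument.
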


\begin{proof}
Denote by $R > 0$ the distance from $X_-$ to the boundary of $\mathcal{U}$, by $R^*$ its running infimum, and by $\tau>0$  
the first time $R^*$ hits zero. The left-continuity of $R$ now yields $\tau=\infty$ and $R^*>0$. Therefore, $(\tau_n)_{n \in \N}$ given by $\tau_n=\inf\{t:R^*_t\le 1/n\}$, is a localizing sequence of stopping times that makes both $K=\sfrac{2}{R}$ and 
$ \sup_{\abs{x}\leq \sfrac{1}{K}} \mathopen{\bigs|}\hat D^2  \xi( x)\mathclose{\bigs|}$
locally bounded, yielding $\xi^{f,X}\in\Uni$.

Since $X \in \Dom(\xi^{f,X})$, Proposition~\ref{P: universal} now yields that 
\[
	f \bigs(\hat \id^{-1}\bigs(\hat X\bigs)\bigs) = f \bigs(\hat \id^{-1}\bigs(\hat X_0\bigs)\bigs)+  \xi^{f, X} \circ  X
\]
 is the  It\^o--Meyer change of variables formula for the real-valued function $\hat\id\bigs(f \bigs(\hat \id^{-1}\bigs)\bigs)$ applied to the real-valued process $\hat X$; see \cite[I.4.57]{js.03}. In view of $f \bigs(\hat \id^{-1}\bigs(\hat X\bigs)\bigs) = f(X)$  the proof is complete.
\end{proof}

\begin{remark}[It\^o's formula requires smoothness]
	It is possible to exhibit a function  $f: \R \rightarrow \R$ and a semimartingale $X$ such that $\xi^{f, X} \in \I(X)$, in the notation of Proposition~\ref{P:Ito}, and such that $f(X)$ is a semimartingale, but 
	$$f(X) \neq f(X_0) + \xi^{f, X} \circ X.$$ 
	For example, choose $X$ equal to Brownian motion started at 0 and $f = \abs{\id}$.  Here $f$ is not twice differentiable but $\xi^{f, X} \in \I(X)$ anyway as it is Lebesgue-a.e.~twice differentiable. Then $\xi^{f, X} \circ X = \sgn(X)\cdot X$ is another Brownian motion while $f(X)-f(X_0)$ is the absolute value of $X$.
\qed
\end{remark}

\subsection{Composition of representations}

We now describe the composition of  representations. It is this result along with its consequences that makes the calculus simple. 
\begin{theorem}[Composition of universal representing functions] \label{T:composition0}
The class $\Uni$ is closed under (dimensionally correct) composition, i.e., if $\xi \in\Uni^n$ and $\psi:  \widebar{\Omega}^n \rightarrow \Cinf$ is another predictable function with $\psi \in \Uni$, then $\psi(\xi)\in\Uni$. Furthermore, if $\psi(\xi(\Delta X))$ is finite-valued, then one has $\psi,\psi(\xi)\in\I(X)$, $\psi\in\I(\xi\circ X)$, and
$$\psi\circ(\xi\circ X) = \psi(\xi)\circ X.$$ 
\end{theorem}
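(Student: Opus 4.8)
The plan is to treat the two assertions in turn: first the purely analytic closedness $\psi(\xi)\in\Uni$, then, under the extra hypothesis, the inclusions into $\I(X)$ and $\I(\xi\circ X)$ together with the composition identity.

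For closedness I would verify the four conditions of Definition~\ref{D:210911} for the composed function $\psi(\xi)$. Condition~\ref{I0bis:i} is immediate from $\psi(\xi)(0)=\psi(\xi(0))=\psi(0)=0$, and condition~\ref{I0bis:ii} follows because the composition of two maps that are twice real-differentiable at the origin is again twice real-differentiable there (chain rule), using $\xi(0)=0$. Applying the chain rule to the real lifts $\psi\bigs(\hat\id^{-1}\bigs)$ and $\xi\bigs(\hat\id^{-1}\bigs)$ expresses $\hat D(\psi(\xi))(0)$ and $\hat D^2(\psi(\xi))(0)$ as products of the corresponding derivatives of $\psi$ and $\xi$ at the origin; since each factor is locally bounded by condition~\ref{I0bis:iii} for $\xi$ and $\psi$ and products of locally bounded processes are locally bounded, condition~\ref{I0bis:iii} holds for $\psi(\xi)$.

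The heart of the argument, and the step I expect to be the main obstacle, is the quadratic bound~\ref{I0bis:iv}. I would control $\bigs|\psi(\xi(x))-\hat D(\psi(\xi))(0)\hat\id(x)\bigs|$ by the triangle inequality, splitting it as $\bigs|\psi(\xi(x))-\hat D\psi(0)\hat\id(\xi(x))\bigs|$ plus $\bigs|\hat D\psi(0)\hat\id(\xi(x))-\hat D(\psi(\xi))(0)\hat\id(x)\bigs|$. For the first term I apply bound~\ref{I0bis:iv} for $\psi$ at the point $\xi(x)$; this requires $|\xi(x)|$ to stay below the relevant (locally bounded) threshold, which I secure by observing that~\ref{I0bis:iv} for $\xi$ together with local boundedness of $\hat D\xi(0)$ makes $\xi$ Lipschitz near zero, i.e.\ $|\xi(x)|\le L|x|$ for $|x|$ below a locally bounded level and $L=|\hat D\xi(0)|+C/K$ locally bounded; the first term is then at most $(\text{locally bounded})\times|\xi(x)|^2\le(\text{locally bounded})\times|x|^2$. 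For the second term I use that the differential of the composition is the composition of the differentials, $\hat D(\psi(\xi))(0)\hat\id(x)=\hat D\psi(0)\hat\id\bigs(\hat D\xi(0)\hat\id(x)\bigs)$, and linearity of $\hat\id$ to rewrite it as $\hat D\psi(0)\,\hat\id\bigs(\xi(x)-\hat D\xi(0)\hat\id(x)\bigs)$, which is bounded by $|\hat D\psi(0)|$ times the quadratic remainder of $\xi$, hence again by $(\text{locally bounded})\times|x|^2$ via~\ref{I0bis:iv} for $\xi$. Taking the minimum of the finitely many locally bounded thresholds produces the single process $K$ demanded in~\ref{I0bis:iv}, completing $\psi(\xi)\in\Uni$.

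For the second assertion, assume $\psi(\xi(\Delta X))$ is finite-valued. Since $\psi(\NaN)=\NaN$, this forces $\xi(\Delta X)$ to be finite, so $X\in\Dom(\xi)$ and $X\in\Dom(\psi(\xi))$; Proposition~\ref{P: universal} then yields $\xi\in\I(X)$ and $\psi(\xi)\in\I(X)$. Writing $Y=\xi\circ X$, Proposition~\ref{P:1}\ref{P:1:i} gives $\Delta Y=\xi(\Delta X)$, so $\psi(\Delta Y)=\psi(\xi(\Delta X))$ is finite and $Y\in\Dom(\psi)$; a further application of Proposition~\ref{P: universal} gives $\psi\in\I(Y)=\I(\xi\circ X)$. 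It remains to establish $\psi\circ(\xi\circ X)=\psi(\xi)\circ X$. Here I would reduce to the general composition result for $\I(X)$ (Corollary~\ref{C:200522}), whose only additional hypothesis is that $\psi'(0)$ be locally bounded — guaranteed by condition~\ref{I0bis:iii} for $\psi\in\Uni$; alternatively one can argue directly by reconciling the three terms of the \'Emery formula~\eqref{eq:190610.2} for $\psi\circ Y$ with those for $\psi(\xi)\circ X$, relating $\hat Y$, $[\hat Y,\hat Y]^c$, and $\mu^Y$ back to the corresponding objects built from $X$. This term-by-term reconciliation is the genuine analytic content of the identity, so in the present (universal) setting the task is to verify that the hypotheses of that general result hold, which the preceding inclusions and the local boundedness of $\hat D\psi(0)$ provide.
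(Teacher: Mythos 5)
Your proposal is correct and follows essentially the same route as the paper's proof: the same chain-rule verification of conditions \ref{I0bis:i}--\ref{I0bis:iii}, the same triangle-inequality splitting of the quadratic remainder for \ref{I0bis:iv} (your explicit Lipschitz bound $\abs{\xi(x)}\le L\abs{x}$ is exactly how the paper justifies both the ratio $\abs{\xi(x)}^2/\abs{x}^2$ being bounded and the enlargement of $K_\xi$ so that $\abs{\xi(x)}\le \sfrac{1}{K_\psi}$), and the same use of Proposition~\ref{P: universal} for the inclusions into $\I(X)$ and $\I(\xi\circ X)$. The only cosmetic difference is that you close via Corollary~\ref{C:200522} while the paper invokes Theorem~\ref{T:composition} directly, but since the corollary is an immediate specialization of that theorem under local boundedness of $\hat D\psi(0)$, this is the same argument.
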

\begin{proof}
Properties \ref{I0bis:i}--\ref{I0bis:iii} of Definition~\ref{D:210911} follow easily by direct calculation; see also \eqref{eq:200521.2} and \eqref{eq:200521.3} below. To show property \ref{I0bis:iv}, by localization we may assume that $\hat D \xi(0)$ is bounded and that there exists a constant $K_{\xi }>0$
such that 
$$\sup_{0 <\abs{x} \leq \sfrac{1}{K_\xi}} \frac{\abs{\xi(x)-\hat D \xi(0)\hat{\id}(x)}}{\abs{x}^2}<\infty.$$ 
An analogous statement applies to $\psi$, with some constant $K_{\psi }>0$. By possibly making $K_\xi$ larger we may also assume that $\sup_{\abs{x} \leq \sfrac{1}{K_\xi}}  \abs{\xi (x)} \leq \sfrac{1}{K_\psi}$.

For $\eta = \psi(\xi)$ we then have $\hat D\eta(0) = \hat D \psi(0) \hat D\hat\xi(0)$ and by the triangular inequality
\begin{align*}
\frac{\abs{\eta(x)-\hat D\eta(0) \hat\id(x)}}{\abs{x}^2}\indicator{x\neq 0}\leq{}& 
\frac{\abs{\psi(\xi(x))-\hat D\psi(0)\hat\id(\xi(x))}}{\abs{\xi(x)}^2}\indicator{\xi(x)\neq 0}\frac{\abs{\xi(x)}^2}{\abs{x}^2}\indicator{x\neq 0}\\
&{}+\frac{\abs{\hat D\psi(0)(\hat\xi(x)-\hat D \hat\xi(0)\hat{\id}(x))}}{\abs{x}^2}\indicator{x\neq 0}.
\end{align*}
In view of the boundedness of $\hat D \psi(0)$ and $\hat D\xi(0)$, the supremum on the right-hand side over $\abs{x}<\sfrac{1}{K_\xi}$ is finite. The statement $\psi(\xi)\in\Uni$ follows. By Proposition~\ref{P: universal} we have $\xi\in\I(X)$ and $\psi\in\I(\xi\circ X)$. As $\hat D\psi(0)$ is locally bounded, the rest of the statement follows from Theorem~\ref{T:composition} below.
\end{proof}
When $\xi$ and $\psi$ are deterministic and time-constant functions, Theorem~\ref{T:composition0} reduces to the tower property in Carr and Lee~\cite[Proposition 2.4]{carr.lee.13}.

 \begin{theorem}[Composition of semimartingale representations] \label{T:composition}
	Let $\xi \in \I(X)$. Moreover, fix $\psi \in \I(\xi\circ X)$ 
 such that $\psi(0) = 0$ and $\indicator{\H_X} \psi$ is twice real-differentiable at zero, $(\P \times A^{X})$--a.e., and 
					\begin{align} \label{eq:200521.1}	
					\indicator{\H_X} \hat D\psi(0) \in L\left(\hat D^2 \hat \xi(0) \cdot \bigs[\hat X,\hat X\bigs]^c\right)\cap
						L\left(\left(\hat \xi - \indicator{\H_X}   \hat D \hat \xi(0) \, \hat \id\right) \star \mu^X\right).
				\end{align}
		Then $\psi(\xi) \in \I(X)$ and we have
	\begin{equation}\label{composition}
		\psi \circ (\xi \circ X) = \psi(\xi) \circ X.
	\end{equation}
\end{theorem}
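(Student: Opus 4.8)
The plan is to prove~\eqref{composition} by matching the four ingredients that define the representation operator $\circ$ in~\eqref{eq:190610.2}, namely the first-order stochastic integral against $\hat{X}$, the second-order integral against $[\hat X,\hat X]^c$, and the jump term $\star\mu^X$. Writing $Y=\xi\circ X$, the left-hand side $\psi\circ Y$ is assembled from $\hat D\psi(0)$, $\hat D^2\psi(0)$, and the $\star\mu^Y$ term, evaluated along the process $Y$ whose own canonical pieces are given by~\eqref{eq:190610.2} applied to $\xi$. The strategy is to substitute the three building blocks of $Y$ into the three building blocks of $\psi\circ Y$ and then to recognize, by the chain rule and the associativity of stochastic integration, that the result equals the corresponding decomposition of $(\psi(\xi))\circ X$. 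The hypotheses~\eqref{eq:200521.1} are precisely what is needed so that $\hat D\psi(0)$ can be pulled inside the second-order and the jump integrals of $Y$.

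First I would verify that $\psi(\xi)\in\I(X)$ by checking conditions~\ref{D_I(X):1}--\ref{D_I(X):6} of Definition~\ref{D:170714}. Condition~\ref{D_I(X):1} follows since $\psi(\xi(\Delta X))=\psi(\Delta Y)$ is $\Cx$--valued by $\psi\in\I(Y)$, and~\ref{D_I(X):2} is immediate from $\psi(0)=0$ and $\xi(0)=0$. For~\ref{D_I(X):3} the real-valued chain rule gives $\indicator{\H_X}\hat D(\psi(\xi))(0)=\indicator{\H_X}\hat D\psi(0)\,\hat D\hat\xi(0)$, with $\H_X=\H_Y$ since $\xi\in\I(X)$ preserves the activity set; and a second application of the chain rule produces $\hat D^2(\psi(\xi))(0)$ as a sum of a term involving $\hat D\psi(0)\,\hat D^2\hat\xi(0)$ and a term involving $\hat D^2\psi(0)$ contracted twice with $\hat D\hat\xi(0)$. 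Conditions~\ref{D_I(X):4} and~\ref{D_I(X):5} then follow from $\xi\in\I(X)$ together with the first membership in~\eqref{eq:200521.1}, using that $\hat D^2\hat\xi(0)\cdot[\hat X,\hat X]^c$ is the second-order clock of $Y$.

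The main work, and the main obstacle, is condition~\ref{D_I(X):6}, i.e. showing that the jump remainder $\psi(\xi)-\indicator{\H_X}\hat D(\psi(\xi))(0)\,\hat\id$ lies in $L_\sigma(\mu^X)$, and simultaneously identifying the $\star\mu^X$ term of $\psi(\xi)\circ X$ with the $\star\mu^Y$ term of $\psi\circ Y$. The natural route is to split $\psi-\indicator{\H_X}\hat D\psi(0)\,\hat\id$ evaluated at the jumps of $Y$ and to use the composition property of the $\star$ integral, Proposition~\ref{P:190411}, to reconcile $\star\mu^Y$ with $\star\mu^X$; the second membership in~\eqref{eq:200521.1} is exactly what guarantees that the linear term $\indicator{\H_X}\hat D\psi(0)\,\hat\id$ can be integrated against the jump part of $Y$. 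The delicate point is that the $\star$ integral is only sigma-localized, so I cannot freely add and subtract the linear correction terms; instead I would argue on each predictable set $C_k$ of a common localizing sequence, where all the $*$ integrals converge absolutely, and then pass to the sigma-limit. Once all three pieces are matched, the identity~\eqref{composition} follows by uniqueness of the decomposition~\eqref{eq:190610.2}. The cleanest final check is to confirm~\eqref{composition} holds at the level of jumps via Proposition~\ref{P:1}\ref{P:1:i}, which gives $\Delta(\psi\circ Y)=\psi(\Delta Y)=\psi(\xi(\Delta X))=\Delta((\psi(\xi))\circ X)$, reducing the remaining verification to the continuous parts.
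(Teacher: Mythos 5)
Your overall route coincides with the paper's: verify the six conditions of Definition~\ref{D:170714} for $\eta=\psi(\xi)$ via the chain-rule identities for $\hat D\eta(0)$ and $\hat D^2\eta(0)$, then expand $\psi\circ(\xi\circ X)$ by substituting the three building blocks of $Y=\xi\circ X$ and match terms using associativity and Proposition~\ref{P:190411}. However, there is a genuine error at a load-bearing point: the claim that $\H_X=\H_Y$ ``since $\xi\in\I(X)$ preserves the activity set.'' This is false in general; only the inclusion $\H_X\subset\H_Y$ holds. A representing function can annihilate the predictable jumps of $X$: take, e.g., $\xi=\indicator{\abs{\id}\geq 1}\,\id$ and $X$ with a single deterministic jump of size $\sfrac{1}{2}$, so that $\xi\in\I(X)$, $\xi\circ X=0$, and $\H_Y$ is everything while $\H_X$ is not. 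The discrepancy is not cosmetic, because the \'Emery formula~\eqref{eq:190610.2} for $\psi\circ Y$ carries the indicator $\indicator{\H_Y}$ while the target $\psi(\xi)\circ X$ carries $\indicator{\H_X}$; when the jump terms are matched, an extra piece $\indicator{\H_Y\setminus\H_X}\hat D\psi(0)\hat\xi$ appears, and one must prove that it contributes nothing. The paper does exactly this, by decomposing $\eta-\indicator{\H_X}\hat D\eta(0)\,\hat\id$ into three summands and observing that $\indicator{\H_Y\setminus\H_X}\abs{\xi}*\mu^X=0$ (on $\H_Y\setminus\H_X$ the measure $\nu^X(\{\cdot\})$ charges mass while $\nu^Y(\{\cdot\})$ does not, which forces $\xi(\Delta X)=0$ there). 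Without this step, your verification of Definition~\ref{D:170714}\ref{D_I(X):6} and your reconciliation of the $\star\mu^Y$ and $\star\mu^X$ terms are incomplete.

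Two smaller inaccuracies. First, conditions \ref{D_I(X):4} and \ref{D_I(X):5} do not follow from ``$\xi\in\I(X)$ together with the first membership in~\eqref{eq:200521.1}'' alone: condition~\ref{D_I(X):4} requires $\indicator{\H_Y}\hat D\psi(0)\in L(\hat Y)$ (i.e., condition \ref{D_I(X):4} for $\psi\in\I(Y)$) together with \emph{both} memberships in~\eqref{eq:200521.1}, so that the quadratic-variation part and the jump part can be stripped off $\hat Y$, leaving integrability against $\indicator{\H_X}\hat D\hat\xi(0)\cdot\hat X$; similarly \ref{D_I(X):5} combines the first membership with $\hat D^2\psi(0)\in L([\hat Y,\hat Y]^c)$. (Your opening paragraph states the correct intuition, but the attribution in the detailed check is wrong.) Second, your ``cleanest final check''---matching jumps via Proposition~\ref{P:1}\ref{P:1:i}---does not reduce the verification to anything: two semimartingales with identical jumps can differ (Brownian motion and the zero process), so this is a consistency check, not a substitute for equating the first-order, quadratic-variation, and jump-measure terms explicitly, which is where the actual work lies.
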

\begin{proof} Let $Y=\xi\circ X$. Without loss of generality, we may assume $\psi \in \I^1(Y)$. We need to check the six properties of Definition~\ref{D:170714} for $\eta = \psi(\xi)$. We clearly have \ref{D_I(X):1}, \ref{D_I(X):2}, and \ref{D_I(X):3}. In analogy to the chain rule for real derivatives, on $\H_X$ we also have
\begin{align} 
\hat D \eta (0) &= \sum_{k=1}^{2 n} \hat D_k \psi(0) \hat D  \hat \xi^{(k)}(0)
	= \hat D  \psi(0)  \hat D\hat \xi(0);  \label{eq:200521.2}\\
\hat D^2  \eta (0) &= \sum_{k,l = 1}^{2n} \hat D^2_{k,l}  \psi  \left(0\right)   \hat D  \hat \xi^{(k)} (0)^\top  \hat D  \hat \xi^{(l)} (0) + \sum_{k = 1}^{2 n} \hat D_k\psi\left(0\right) \hat D^2 \hat \xi^{(k)}(0). \label{eq:200521.3}
\end{align} 
By assumption, we have $\indicator{\H_Y} \hat D \psi(0) \in L(\hat Y)$. Since $\H_ X \subset \H_Y$, this also yields $\indicator{\H_X} \hat D \psi(0) \in L(\hat Y)$. Together with \eqref{eq:200521.1}, we obtain 
\[
\indicator{\H_X} \hat D \psi(0) \in L\left(\indicator{\H_X}  \hat D \hat  \xi (0) \cdot \hat X \right),   
\] 
hence also Definition~\ref{D:170714}\ref{D_I(X):4}  with $\xi$ replaced by $\eta = \psi(\xi)$. 
Similarly, we also get  Definition~\ref{D:170714}\ref{D_I(X):5}.

Next, observe in view of identity \eqref{eq:200521.2} that 
\begin{align*}
	\eta - \indicator{\H_X}   \hat D \eta(0) \, \hat \id   &= \left(\psi(\xi) - \indicator{\H_Y} \hat D \psi (0) \hat \xi \right)  
		+  \indicator{\H_X} \hat D  \psi (0)\left(\hat \xi - \hat D \hat \xi(0) \,\hat \id \right)  
		+  \indicator{\H_Y \setminus \H_X}  \hat D  \psi (0) \hat \xi \\&\in L_\sigma(\mu^X)
\end{align*}
by Proposition~\ref{P:190411}, the assumptions, and $\indicator{\H_Y \setminus \H_X}   | \xi| * \mu^X = 0$. This yields Definition~\ref{D:170714}\ref{D_I(X):6}  with $\xi$ replaced by $\eta$.

Finally,  \eqref{composition} follows from \eqref{eq:200521.1}, \eqref{eq:200521.2}, and \eqref{eq:200521.3}  together with the computations
\begin{align*}
		\psi \circ (\xi \circ X)
		  ={}& \indicator{\H_Y}   \hat D \psi (0) \cdot \hat Y+ \frac{1}{2} \hat D^2 \psi(0) \cdot \bigs[\hat Y,\hat Y\bigs]^c + \left(\psi - \indicator{\H_Y}   \hat D \psi(0) \, \hat \id  \right) \star \mu^{Y}\\
			={}& \indicator{\H_Y}\hat D\psi(0) \cdot  \biggl(\indicator{\H_X}\hat D \hat\xi(0) \cdot \hat X+ \frac{1}{2} \hat D^2 \hat\xi(0) \cdot \bigs[\hat X,\hat X\bigs]^c + \left(\hat\xi -\indicator{\H_X}\hat D \hat\xi(0)\hat\id  \right) \star \mu^X\biggr)\\
			+{}& \frac{1}{2} \hat D^2 \psi(0) \cdot \Biggl(\sum_{i,j  = 1}^d  \hat D_i \hat\xi(0)  \hat D_j \hat\xi(0)    \cdot \bigs[\hat X^{(i)},\hat X^{(j)}\bigs]^c\Biggr)  +  \Bigl(\psi(\xi) -  \indicator{\H_Y}\hat D \psi(0)\hat \xi \Bigr) \star \mu^X\\
			={}& \indicator{\H_X}\hat D\eta(0) \cdot \hat X + \frac{1}{2}  \hat D^2 \eta(0)     \cdot \bigs[\hat X,\hat X\bigs]^c
			+ \left(\eta - \indicator{\H_X}\hat D\eta(0)\hat\id \right) \star \mu^X\\
			={}& \psi (\xi) \circ X.
\end{align*}
Here, we have used the associativity of the stochastic integrals with respect to $\hat X$ and $[\hat X,\hat X]^c$ as well as the associativity of the $\star$ jump-measure integral.
\end{proof}

Example~\ref{Ex:3} below shows that without the assumption that $\psi$ is twice real-dif\-fer\-en\-tiable at zero, $(\P \times A^{X})$--a.e., the conclusion of Theorem~\ref{T:composition} does not necessarily hold.

\begin{remark}[The linear case] \label{R:200530}
	If $\xi \in \I(X)$ is linear, i.e., of the form $\xi= \zeta\id$ for some predictable process $\zeta$, then \eqref{eq:200521.1} is automatically satisfied. 
	\qed
\end{remark}

\begin{corollary}[Sufficient condition for composition of representations]\label{C:200522} 
	Let $\xi \in \I(X)$. Moreover, let $\psi \in \I(\xi \circ X)$ such that $\psi(0) = 0$ and   $\indicator{\H_{\xi \circ X}}  \psi$ is twice real-differentiable at zero, $(\P \times A^{X})$--a.e., and such that $\indicator{\H_{\xi \circ X}} \hat D\psi(0)$ is locally bounded (e.g., if $\psi \in \Uni$ and $\xi \circ X \in \Dom(\psi)$). Then $\psi(\xi) \in \I(X)$ and \eqref{composition} holds.
\end{corollary}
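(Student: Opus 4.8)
The plan is to reduce the corollary to Theorem~\ref{T:composition}: the only difference between the two statements is that the corollary replaces the integrability requirement \eqref{eq:200521.1} by the cleaner hypothesis that $\indicator{\H_{\xi \circ X}} \hat D\psi(0)$ be locally bounded, and imposes differentiability of $\indicator{\H_{\xi \circ X}}\psi$ rather than of $\indicator{\H_X}\psi$. Writing $Y = \xi\circ X$, so that $\psi \in \I(Y)$ by assumption, the key structural fact I would use throughout is the inclusion $\H_X \subset \H_Y = \H_{\xi \circ X}$ (recall that $Y^\ddp$ can only have activity where $X^\ddp$ does), which gives the pointwise identity $\indicator{\H_X}\indicator{\H_Y} = \indicator{\H_X}$.

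First I would transfer the differentiability hypothesis. Since $\indicator{\H_X}\psi = \indicator{\H_X}\indicator{\H_Y}\psi$, twice real-differentiability at zero of $\indicator{\H_Y}\psi$ (the corollary's assumption) immediately yields the same property for $\indicator{\H_X}\psi$, $(\P \times A^X)$--a.e., which is what Theorem~\ref{T:composition} requires. The same identity shows $\indicator{\H_X}\hat D\psi(0) = \indicator{\H_X}(\indicator{\H_Y}\hat D\psi(0))$ is locally bounded, because the corollary assumes $\indicator{\H_Y}\hat D\psi(0)$ is locally bounded and multiplication by a predictable $\{0,1\}$--valued process preserves local boundedness.

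The crux is then to derive \eqref{eq:200521.1}. Both processes appearing there, namely $\hat D^2\hat\xi(0)\cdot[\hat X,\hat X]^c$ and $(\hat\xi - \indicator{\H_X}\hat D\hat\xi(0)\hat\id)\star\mu^X$, are genuine semimartingales: they are exactly the (lifted) second and third terms of the representation $Y = \xi\circ X$ in \eqref{eq:190610.2}, and these are well-defined precisely because $\xi \in \I(X)$ guarantees properties \ref{D_I(X):5} and \ref{D_I(X):6} of Definition~\ref{D:170714}. Invoking the standard fact that every locally bounded predictable process is integrable with respect to any semimartingale, the local boundedness of $\indicator{\H_X}\hat D\psi(0)$ established above places it in both integration classes $L(\hat D^2\hat\xi(0)\cdot[\hat X,\hat X]^c)$ and $L((\hat\xi - \indicator{\H_X}\hat D\hat\xi(0)\hat\id)\star\mu^X)$. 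Hence \eqref{eq:200521.1} holds, all hypotheses of Theorem~\ref{T:composition} are met, and that theorem delivers $\psi(\xi)\in\I(X)$ together with \eqref{composition}.

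Finally I would dispatch the parenthetical special case: if $\psi\in\Uni$, then Definition~\ref{D:210911}\ref{I0bis:i}--\ref{I0bis:iii} supply $\psi(0)=0$, twice real-differentiability at zero, and local boundedness of $\hat D\psi(0)$ (so a fortiori of $\indicator{\H_Y}\hat D\psi(0)$), while $\xi\circ X\in\Dom(\psi)$ combined with Proposition~\ref{P: universal} gives $\psi\in\I(\xi\circ X)$; thus the general hypotheses of the corollary are automatically in force. I do not expect a serious obstacle here; the only points requiring a little care are confirming that the two objects in \eqref{eq:200521.1} are semimartingales, so that local boundedness genuinely buys integrability, and that passing from $\H_Y$ down to $\H_X$ does no harm — both of which are immediate consequences of $\H_X\subset\H_Y$ and $\xi\in\I(X)$.
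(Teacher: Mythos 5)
Your proposal is correct and follows exactly the route the paper intends: the corollary is stated without proof as an immediate consequence of Theorem~\ref{T:composition}, and your argument supplies precisely that reduction --- using $\H_X \subset \H_{\xi\circ X}$ to transfer the differentiability and local-boundedness hypotheses from $\H_{\xi\circ X}$ to $\H_X$, and then invoking the standard fact that a locally bounded predictable process is integrable with respect to any semimartingale (the two processes in \eqref{eq:200521.1} being genuine semimartingales because $\xi\in\I(X)$ guarantees Definition~\ref{D:170714}\ref{D_I(X):5} and \ref{D_I(X):6}). The handling of the parenthetical case via Proposition~\ref{P: universal} and Definition~\ref{D:210911} is likewise the intended one, so there is nothing to add.
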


\begin{remark}[Algebra of $X$--representable processes]
Thanks to Proposition~\ref{P:1_J}\ref{P:1_J:i}, the space of $\Cx$--valued $X$--representable processes is a vector space. 
Proposition~\ref{P:Ito} and Corollary~\ref{C:200522} yield that this space is  also an algebra, namely closed under multiplication. 
Indeed, for  $U =  U_0 + \xi^U \circ X$ and $V =  V_0 + \xi^V \circ X$ we have
\[ \pushQED{\qed}
UV = U_0 V_0 
+\left( \left(U_{-}+\xi^U\right) \left(V_{-}+\xi^V\right) - U_{-}  V_{-}\right) \circ X.
\qedhere\popQED \]
\end{remark}

\begin{remark}[Converse of the composition theorem]  \label{R:200521.2}
A reverse direction of Theorem~\ref{T:composition} holds, too. To wit,  let $\xi \in \I^n(X)$. Moreover, fix some predictable function $\psi:  \overline \Omega^n \rightarrow  \Cinf$
 such that $\psi(\xi) \in \I^1(X)$, $\indicator{\H_X} \psi$ is twice real-differentiable at zero, $(\P \times A^{X})$--a.e., and 
\eqref{eq:200521.1}	holds. Then $\psi \in \I(\xi\circ X)$.  

To see this, first note that $\psi(0) = \psi(\xi(0)) = 0$,  $(\P \times A^{X})$--a.e.  We next follow the arguments of Theorem~\ref{T:composition}, using \eqref{eq:200521.2} and \eqref {eq:200521.3} with $\eta = \psi(\xi) \in \I(X)$. We then directly obtain that Definition~\ref{D:170714}\ref{D_I(X):1}, \ref{D_I(X):2}, \ref{D_I(X):3}, \ref{D_I(X):4}, and \ref{D_I(X):5} hold with $\xi$ replaced by $\psi$. Here we used again $\xi(\Delta X) = 0$ on $\H_ X \setminus \H_{\xi\circ X}$.  Next, observe that
 \begin{align*}
	\psi(\xi) - \indicator{\H_{\xi\circ X}} \hat D  \psi (0) \hat \xi  &=  \left(\eta -  \indicator{\H_X} \hat D \eta (0) \,\hat \id \right) +  \indicator{\H_X}  D \hat \psi (0)   \left(\hat D \hat \xi (0) \, \hat \id  -  \hat \xi\right) \in  L_\sigma(\mu^X),
\end{align*}
yielding the claim.

Examples~\ref{Ex:1a} and \ref{Ex:1a''} illustrate again how essential \eqref{eq:200521.1} is for the remark to hold.
\qed
\end{remark}

\begin{corollary}[Inverse functions]  \label{C:170726}
Let $\xi \in \I^d(X)$ and $Y = Y_0 + \xi \circ X$. Moreover,  assume that the smallest singular value of $\hat D\hat{\xi}(0)$ is locally bounded away from zero and that $\xi$ allows for a predictable left inverse $\psi$ (see Subsection~\ref{SS:predictable}). Then $\psi \in \I^d(Y)$ and $X = X_0 + \psi \circ Y$. 
\end{corollary}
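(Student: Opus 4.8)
The plan is to deduce $\psi \in \I^d(Y)$ from the converse composition statement of Remark~\ref{R:200521.2} (applied component-wise, since $\psi$ is $\Cinf^d$--valued) and then to read off $X = X_0 + \psi\circ Y$ from the composition formula \eqref{composition}. The left-inverse property $\psi(\xi(\Delta V)) = \Delta V$, valid for every $V \in \Dom(\xi)$, already gives $\psi(0) = \psi(\xi(0)) = 0$ and $\psi(\xi(\Delta X)) = \Delta X$; combined with the local-inverse regularity established in the next step it will yield $\psi(\xi) = \id$ near the origin, whence each component satisfies $\psi^{(i)}(\xi) = \id_i \in \Uni^1 \subset \I^1(X)$. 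This supplies the hypothesis $\psi(\xi) \in \I(X)$ needed to invoke Remark~\ref{R:200521.2}.

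The key analytic input is the regularity of the left inverse at zero. Since $\xi$ is twice real-differentiable at $0$ with $\xi(0) = 0$ and, by the singular-value hypothesis, invertible first derivative $\hat D\hat\xi(0)$, a second-order inverse-function-theorem argument shows that the real lift of $\xi$ is a local homeomorphism near the origin whose local inverse is itself twice real-differentiable at $0$ with first derivative $(\hat D\hat\xi(0))^{-1}$. The predictable left inverse $\psi$ must coincide with this local inverse on the range of $\xi$, so $\indicator{\H_X}\psi$ is twice real-differentiable at zero with $\hat D\psi(0) = (\hat D\hat\xi(0))^{-1}$; this derivative relation is equivalently the chain rule \eqref{eq:200521.2} applied to $\psi(\xi) = \id$. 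Because the smallest singular value of $\hat D\hat\xi(0)$ is locally bounded away from zero, the inverse matrix --- hence $\hat D\psi(0)$ --- is predictable and locally bounded. I expect this measurable inverse-function-theorem step, delivering twice differentiability at a single point together with the predictability and local boundedness of $\hat D\psi(0)$, to be the main obstacle.

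With $\hat D\psi(0)$ locally bounded, the integrability requirement \eqref{eq:200521.1} becomes routine. Indeed, $\hat D^2\hat\xi(0) \cdot [\hat X,\hat X]^c$ is of finite variation by Definition~\ref{D:170714}\ref{D_I(X):5} and therefore admits any locally bounded predictable integrand, while $(\hat\xi - \indicator{\H_X}\hat D\hat\xi(0)\,\hat\id) \star \mu^X$ belongs to $\V^\d_\sigma$ by Definition~\ref{D:170714}\ref{D_I(X):6} and Proposition~\ref{P:190911}, so it too accepts the locally bounded integrand $\indicator{\H_X}\hat D\psi(0)$ via Proposition~\ref{P:190411}. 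Remark~\ref{R:200521.2} then yields $\psi^{(i)} \in \I(\xi\circ X)$ for each $i$, that is $\psi \in \I^d(Y)$.

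Finally, all hypotheses of Theorem~\ref{T:composition} are now in place, so the composition formula \eqref{composition} gives $\psi \circ Y = \psi\circ(\xi\circ X) = \psi(\xi)\circ X$. Invoking $\psi(\xi) = \id$ together with Proposition~\ref{P:1}\ref{P:1:iii}, which yields $\id \circ X = X - X_0$, I conclude $\psi\circ Y = X - X_0$, i.e., $X = X_0 + \psi\circ Y$.
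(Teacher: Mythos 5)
Your proof is correct and takes essentially the same route as the paper's: a pointwise inverse-function-theorem argument identifying $\psi$ near zero with the local inverse of $\xi$ (hence $\indicator{\H_X}\psi$ twice real-differentiable at zero with $\hat D \psi(0) = (\hat D\hat\xi(0))^{-1}$), local boundedness of $\hat D\psi(0)$ from the singular-value hypothesis, and then the converse composition statement of Remark~\ref{R:200521.2} followed by Theorem~\ref{T:composition}. The paper compresses the remaining checks (condition \eqref{eq:200521.1}, the hypothesis $\psi(\xi)\in\I(X)$, and the final identification $\psi(\xi)\circ X = \id\circ X = X - X_0$) into ``the assertion follows from Remark~\ref{R:200521.2}''; you spell them out explicitly, which matches the intended argument.
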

\begin{proof}
	Since $\hat \xi\bigs(\hat \id^{-1}\bigs)$ is continuously differentiable at zero on $\H_X$, $\hat \psi\bigs(\hat \id^{-1}\bigs)$ is actually an inverse of $\hat \xi\bigs(\hat \id^{-1}\bigs)$ in a neighbourhood of zero on $\H_X$. Thus $\indicator{\H_X} \psi$ is twice real-differentiable at zero with $\hat D \hat \psi(0) = (\hat D \hat{\xi}(0))^{-1}$ on $\H_X$.
	If now the smallest singular value of $\hat D\hat {\xi}(0)$ is locally bounded away from zero, then the largest singular value of $\hat D\hat{\psi}(0)$ is locally bounded and by equivalence of the Schatten and maximum matrix norms each element of $\hat D {\psi}(0)$ is locally bounded. The assertion follows from Remark~\ref{R:200521.2}. 
\end{proof}
If the assumption that the smallest singular value of $\hat D\hat {\xi}(0)$ is locally bounded away from zero is replaced by the weaker assumption that is is merely positive, then Corollary~\ref{C:170726} is wrong as Examples~\ref{Ex:1a} and \ref{Ex:1a''} below illustrate, even if $\psi$ is an inverse of $\xi$ and  $d=1$.

\begin{remark}[Advantages of the proposed calculus]
	Results like Theorem~\ref{T:composition}  make this stochastic calculus simple and powerful. 	Consider the situation when one has to perform a change of variables $f(Y)$ on an $X$--representable $\R$--valued process $Y$.  A direct application of the It\^o--Meyer formula \eqref{eq:170702} to the representation of $Y$ in \eqref{eq:170704} yields
\begin{equation*}
	\begin{split}
f(Y) = f(Y_0)  + Df&(Y_-) \cdot \left(D \xi (0) \cdot X+ \frac{1}{2} D^2 \xi(0) \cdot [X,X]^c 
+ \left(\xi - D \xi(0) \, \id \right) \star \mu^X\right) \\
&+ \frac{1}{2} D^2 f(Y_-) \cdot [Y,Y]^c + (f(Y_- + \id) - f(Y_-) - Df(Y_-) \,\id) * \mu^Y.
	\end{split}
\end{equation*}
One then has to collect all terms manually in order to simplify this expression and eventually recast it in terms of $\mu^X$.  
	
In contrast, the notation of \eqref{eq:170704.4} gives $f(Y) = f(Y_0) + \xi^{f, Y}(\xi) \circ X$.
Only the function $\xi^{f, Y} (\xi)$  needs to be computed and then the corresponding representation applies.
This is pedagogically pleasing because $\xi^{f, Y} (\xi)$ describes the jumps of $f(Y)$ in terms of the jumps of $X$, i.e.,
\[\pushQED{\qed} \Delta f(Y) = f(Y_- +\Delta Y)-f(Y_-) = f(Y_- +\xi(\Delta X))-f(Y_-) = \xi^{f, Y}(\xi(\Delta X)).
\qedhere\popQED \] 
\end{remark}

\subsection{Alternative  \'Emery formula} \label{SS:Wirtinger}
In the non-analytic case, which too is of practical importance, it can be helpful to rephrase the \'Emery formula~\eqref{eq:190610.2} in terms of the $\Cx^{2d}$--valued process $(X,X^*)$. Here $X^*$ denotes the complex conjugate of $X$.  This allows the use of Wirtinger partial derivatives (see \cite{wirtinger.27}), given by
\begin{equation}\label{eq:Wirtinger}
\frac{\partial}{\partial x} = \frac{1}{2}\bigg(\frac{\partial}{\partial \Re x}-i \frac{\partial}{\partial \Im x}\bigg)\quad\text{and}\quad \frac{\partial}{\partial x^{*}} = \frac{1}{2}\bigg(\frac{\partial}{\partial \Re x}+i \frac{\partial}{\partial \Im x}\bigg).
\end{equation}
This turns out to be convenient in some applications; see Proposition~\ref{P:190114.1} and Example~\ref{E:190302}. Observe, however, that 
the proposed calculus allows one to write simply $\xi\circ X$ and operate on the level of $\xi$, where the specific physical implementation of $\xi\circ X$ is immaterial.

To arrive at the alternative \'Emery formula, we introduce the function  $\check \id:\Cinf^d\to\Cinf^{2d}$ by
\begin{equation*}
\check{\id} = 
\left(I_d\otimes \left[\begin{array}{cc}1 & i\\1 &-i\end{array}\right]\right) \hat{\id}; \qquad \check \id(\NaN) = \NaN,
\end{equation*}
where $\otimes$ again denotes  the Kronecker product.
This allows us to introduce the process
\begin{align}\label{eq:190704.2}
\check{X}=\check{\id}(X).
\end{align} 
Observe that $\hat{X}$ is the $\R^{2d}$--valued process containing the values of $\Re X$ and $\Im X$, interlaced, while $\check{X}$ is the $\Cx^{2d}$--valued process containing $X$ and its conjugate $X^*$, interlaced.

Next, we denote by $\check{D}\xi$ the row vector of Wirtinger derivatives, given by
\begin{equation}\label{eq:190115.3}
\check{D}\xi = \frac{1}{2} \hat D{\xi} \left( I_d\otimes \left[\begin{array}{cc}1 & 1\\-i &i\end{array}\right]\right),
\end{equation} 
and by $\check{D}^2\xi$ the corresponding `Wirtinger Hessian,' given by
\begin{equation*}
\check{D}^2 \xi^{(k)} =\check{D} (\check D \xi^{(k)})^\top =  \left( I_d\otimes \left[\begin{array}{cc}1 & -i\\1 &i\end{array}\right]\right)  \frac{1}{4} \hat D^2{\xi}^{(k)} \left( I_d\otimes \left[\begin{array}{cc}1 & 1\\-i &i\end{array}\right]\right), \qquad k = 1, \ldots, n.
\end{equation*} 

The following technical observation will be very useful in the subsequent proposition.
\begin{lemma}[Invertible linear transformations in a stochastic integral]\label{L:200530}
Fix $m \in \N$. 
Let $\Lambda_1$, $\Lambda_2$ be arbitrary invertible matrices in $\Cx^{m \times m}$.  Let $\zeta$ denote a $\Cx^{m \times m}$--valued predictable process and let $V$ denote a  $\Cx^{m \times m}$--valued semimartingale.
Then the following are equivalent.
\begin{enumerate}[label={\rm(\roman{*})}, ref={\rm(\roman{*})}]
\item\label{phiG1} $\zeta \in L(V)$.
\item\label{phiG2} $\Lambda_1\zeta \Lambda_2 \in L\left(\Lambda_2^{-1}V\Lambda_1^{-1}\right)$.
\end{enumerate}
If one (hence both) of these conditions holds, then 
\begin{align} \label{eq:200530}
\zeta \cdot V = \Lambda_1\zeta \Lambda_2 \cdot \left(\Lambda_2^{-1}V\Lambda_1^{-1}\right).
\end{align}
\end{lemma}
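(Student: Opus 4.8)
The plan is to turn the two-sided matrix products $\Lambda_1\zeta\Lambda_2$ and $\Lambda_2^{-1}V\Lambda_1^{-1}$ into a single constant, invertible linear factor by flattening, and then invoke the associativity of vector stochastic integration. Recall that by definition $\zeta\cdot V=\vecr(\zeta)\cdot\vecc(V)$. First I would record the two purely algebraic flattening identities
\[
\vecc(\Lambda_2^{-1}V\Lambda_1^{-1})=\bigl((\Lambda_1^{-1})^\top\otimes\Lambda_2^{-1}\bigr)\vecc(V),
\qquad
\vecr(\Lambda_1\zeta\Lambda_2)=\vecr(\zeta)\,(\Lambda_1^\top\otimes\Lambda_2),
\]
where the first is the standard rule $\vecc(AXB)=(B^\top\otimes A)\vecc(X)$ and the second follows from it via $\vecr(Y)=\vecc(Y^\top)^\top$. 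Writing $A=\Lambda_1^\top\otimes\Lambda_2$ and $W=\vecc(V)$, invertibility of $\Lambda_1,\Lambda_2$ gives that $A$ is invertible with $A^{-1}=(\Lambda_1^{-1})^\top\otimes\Lambda_2^{-1}$ (using $(A\otimes B)^{-1}=A^{-1}\otimes B^{-1}$ and $(A^\top)^{-1}=(A^{-1})^\top$). Hence the right-hand side of \eqref{eq:200530} equals $\vecr(\zeta)A\cdot(A^{-1}W)$, while the left-hand side is $\vecr(\zeta)\cdot W$, and likewise condition \ref{phiG2} reads $\vecr(\zeta)A\in L(A^{-1}W)$ whereas \ref{phiG1} reads $\vecr(\zeta)\in L(W)$.

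\textbf{The base case.} It then remains to prove the following: for a constant invertible $A\in\Cx^{m^2\times m^2}$, a predictable row vector $H$, and an $m^2$--dimensional semimartingale $W$, one has $H\in L(W)\iff HA\in L(A^{-1}W)$, and whenever either side is defined $HA\cdot(A^{-1}W)=H\cdot W$. Taking $H=\vecr(\zeta)$ and using $A(A^{-1}W)=W$ reproduces exactly the asserted equivalence \ref{phiG1}$\iff$\ref{phiG2} and the identity \eqref{eq:200530}. The core ingredient here is the (well-known) associativity of the vector stochastic integral under a constant linear map, $H\cdot(BW')=(HB)\cdot W'$; applied with $B=A$ and $W'=A^{-1}W$ it yields $(HA)\cdot(A^{-1}W)=H\cdot W$, and the invertibility of $A$ is what upgrades the usual one-sided ``defined on one side'' statement into a genuine two-way equivalence of integrability.

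\textbf{Passing to the real lift.} In the real-valued case the associativity above is standard (Jacod and Shiryaev~\cite{js.03}). For complex $H,W,A$ I would pass to the real lift, recalling that by \eqref{eq:190109.1} the complex integral is defined as $H\cdot W=(H\otimes[1\ i])\cdot\hat W$. Using the multidimensional analogue of \eqref{eq:190109.2}, left-multiplication by the constant complex matrix $A$ lifts to left-multiplication of $\hat W$ by the constant real matrix $\mathcal A$ obtained by replacing each entry $A_{kl}$ with its real $2\times2$ block $\bigl[\begin{smallmatrix}\Re A_{kl}&-\Im A_{kl}\\ \Im A_{kl}&\Re A_{kl}\end{smallmatrix}\bigr]$, so that $\widehat{AW'}=\mathcal A\hat{W'}$, while the integrand transforms as $(HA)\otimes[1\ i]=(H\otimes[1\ i])\mathcal A$. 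Substituting these two homomorphism identities, the complex base case reduces to the real one applied with integrand $H\otimes[1\ i]$ and the invertible matrix $\mathcal A$ (invertibility of $A$ being equivalent to that of $\mathcal A$).

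\textbf{Expected main obstacle.} The only genuine work lies in this last reduction, namely verifying the two intertwining identities $\widehat{AW'}=\mathcal A\hat{W'}$ and $(HA)\otimes[1\ i]=(H\otimes[1\ i])\mathcal A$, which assert that the lift together with the Kronecker factor $[1\ i]$ turns complex left-multiplication into the corresponding real left-multiplication. These are the $m^2$--dimensional analogues of \eqref{eq:190109.2}; they are elementary but require careful bookkeeping of the interlacing convention so that the Kronecker blocks line up correctly. Once they are confirmed, everything else is routine manipulation with the Kronecker/vec identities of the first paragraph and the cited associativity of real vector integration.
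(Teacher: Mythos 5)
Your proof is correct, and it reaches the result by a genuinely different route in the key step. Both arguments share the same skeleton: flatten via $\vecr$ and $\vecc$, reduce to a single constant invertible linear transformation of the vectorized semimartingale, and use invertibility to turn a one-sided associativity statement into a two-sided equivalence of integrability. The differences are these. (a) You absorb $\Lambda_1$ and $\Lambda_2$ simultaneously into $A=\Lambda_1^\top\otimes\Lambda_2$, while the paper first removes $\Lambda_1$ by the transpose trick $\zeta\cdot V=\zeta^\top\cdot V^\top$ and then takes $R=I_m\otimes\Lambda_2$; this difference is cosmetic, and your Kronecker identities are correct. (b) More substantively, the paper obtains the core step $(\vecr(\zeta)R)\in L\left(R^{-1}\vecc(V)\right)$ together with $\zeta\cdot V=(\vecr(\zeta)R)\cdot\left(R^{-1}\vecc(V)\right)$ from its own representation machinery, namely Proposition~\ref{P:integral} combined with Remark~\ref{R:200521.2} (the hypothesis \eqref{eq:200521.1} being automatic for linear representing functions, cf.\ Remark~\ref{R:200530}); since those results are formulated directly for complex-valued semimartingales, no separate real lift is needed there. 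You instead appeal to the classical associativity of the real-valued vector stochastic integral and perform the complex-to-real reduction by hand, via the intertwining identities $\widehat{AW'}=\mathcal{A}\hat{W}'$ and $(HA)\otimes[1\ i]=(H\otimes[1\ i])\mathcal{A}$, which are indeed the correct $m^2$--dimensional analogues of \eqref{eq:190109.2} and which you identify as the only real bookkeeping burden. The trade-off: the paper's proof is shorter given the machinery already in place and stays self-contained within its own framework; yours is more elementary and independent of the composition theory (Theorem~\ref{T:composition} and its converse), so it would stand even if one wanted this lemma before, or without, that theory. The one point you should pin down is the precise form of the ``well-known'' associativity you invoke: what is needed is associativity of the vector (not componentwise) stochastic integral --- if $B\in L(W')$ and $H\in L(B\cdot W')$, then $HB\in L(W')$ and $H\cdot(B\cdot W')=(HB)\cdot W'$ --- which for constant matrices is standard and can be cited, e.g., from \cite{shiryaev.cherny.02}.
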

\begin{proof}
Note that it suffices to argue the implication from \ref{phiG1} to \ref{phiG2} and to show \eqref{eq:200530}. Moreover, since $\zeta \in L(V)\iff \zeta^\top \in L(V^\top)$ and $\zeta \cdot V = \zeta^\top \cdot V^\top$, it is enough to prove the statement with $\Lambda_1$ being the identity matrix.  To this end, assume \ref{phiG1} holds. Let $\vecr(\zeta)$ (respectively, $\vecc(V)$) denote the row-wise (column-wise) flattening of $\zeta$ (respectively, $V$), that produces a $(1 \times m^2)$--dimensional row ($m^2$--dimensional column) vector. Then \ref{phiG1} is equivalent to $\vecr(\zeta) \in L(\vecc(V))$ and one has $\zeta \cdot V = \vecr(\zeta) \cdot \vecc(V)$. 
Thanks to Proposition~\ref{P:integral} and Remark~\ref{R:200521.2}, this then yields $(\vecr(\zeta) R)  \in L(R^{-1}\vecc(V))$ for any invertible $m^2\times m^2$ matrix $R$, along with 
$$\zeta \cdot V = (\vecr(\zeta) R) \cdot (R^{-1} \vecc(V)).$$
 Choosing $R = I_m\otimes \Lambda_2$ yields $R^{-1} = I_m\otimes \Lambda^{-1}_2$, $\vecr(\zeta) R=\vecr(\zeta \Lambda_2)$, 
and hence the desired statement.
\end{proof}

\begin{proposition}[\'Emery formula in terms of Wirtinger derivatives] \label{P:190114.1}
For $\xi \in \I(X)$, the following terms are well defined and we have
\begin{align}  
	\xi\circ X 	 &=	\indicator{\H_X}   \check D \xi (0) \cdot \check X+ \frac{1}{2}\check D^2  \xi(0) \cdot \left[\check X,\check X\right]^c + \left(\xi - \indicator{\H_X}   \check D \xi(0) \, \check \id  \right) \star \mu^{X}.
\label{eq:190114.3}
\end{align}	
Furthermore, if $\indicator{\H_X}  \xi$ is analytic at $0$, $(\P \times A^X)$--a.e., the following terms are well defined and we have
\begin{equation} \label{eq:190703.1}
	\xi\circ X  =	\indicator{\H_X}   D \xi (0) \cdot  X+ \frac{1}{2}  D^2  \xi(0) \cdot \left[ X,X\right]^c + \left(\xi - \indicator{\H_X}   D \xi(0) \,  \id  \right) \star \mu^{X}.
\end{equation}
Here $D\xi(0)$ and $D^2\xi(0)$ stand for complex derivatives. 
\qed
\end{proposition}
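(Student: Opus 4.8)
The plan is to derive the first identity \eqref{eq:190114.3} directly from the defining \'Emery formula \eqref{eq:190610.2} by matching the three summands one at a time, and then to obtain \eqref{eq:190703.1} by specializing to the analytic case. The algebraic engine is the constant invertible matrix $P = \left[\begin{smallmatrix}1&i\\1&-i\end{smallmatrix}\right]$, whose inverse is $P^{-1}=\tfrac12\left[\begin{smallmatrix}1&1\\-i&i\end{smallmatrix}\right]$. Writing $Q=I_d\otimes P$, the very definitions of $\check\id$, $\check D\xi$, and $\check D^2\xi$ (note $\left[\begin{smallmatrix}1&1\\-i&i\end{smallmatrix}\right]=2P^{-1}$ and $\left[\begin{smallmatrix}1&-i\\1&i\end{smallmatrix}\right]=(2P^{-1})^{\top}$) give
$$\check\id = Q\,\hat\id,\qquad \check D\xi = \hat D\xi\,Q^{-1},\qquad \check D^2\xi^{(k)} = (Q^{-1})^{\top}\,\hat D^2\xi^{(k)}\,Q^{-1},$$
whence $\check X = Q\hat X$ and, by the transformation rule $[RV,RV]=R[V,V]R^{\top}$ recorded in the setup, $[\check X,\check X]^c = Q\,[\hat X,\hat X]^c\,Q^{\top}$.

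First I would dispose of the jump term. Because $\check D\xi(0)\,\check\id = \hat D\xi(0)\,Q^{-1}Q\,\hat\id = \hat D\xi(0)\,\hat\id$ is a pointwise identity of predictable functions, the two integrands agree, so the $\star\mu^X$ terms in \eqref{eq:190610.2} and \eqref{eq:190114.3} are literally equal and the integrability required for \eqref{eq:190114.3} is inherited from $\xi\in\I(X)$. Next I would match the remaining two terms. The first-order term satisfies $\check D\xi(0)\cdot\check X = \hat D\xi(0)Q^{-1}\cdot Q\hat X = \hat D\xi(0)\cdot\hat X$ by the associativity of the vector stochastic integral under the constant invertible transformation $Q$ (Proposition~\ref{P:integral} together with Remark~\ref{R:200521.2}, exactly as in the proof of Lemma~\ref{L:200530}). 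The quadratic-variation term is an instance of Lemma~\ref{L:200530} proper: taking $\Lambda_2=Q^{-1}$ and $\Lambda_1=(Q^{-1})^{\top}$ and applying the lemma componentwise to the matrix-valued semimartingale $[\hat X,\hat X]^c$ with integrand $\tfrac12\hat D^2\xi^{(k)}(0)$ converts $\tfrac12\hat D^2\xi^{(k)}(0)\cdot[\hat X,\hat X]^c$ into $\tfrac12\check D^2\xi^{(k)}(0)\cdot[\check X,\check X]^c$, since $\Lambda_1\,\tfrac12\hat D^2\xi^{(k)}(0)\,\Lambda_2=\tfrac12\check D^2\xi^{(k)}(0)$ and $\Lambda_2^{-1}[\hat X,\hat X]^c\Lambda_1^{-1}=Q[\hat X,\hat X]^cQ^{\top}=[\check X,\check X]^c$. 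The equivalence \ref{phiG1}$\Leftrightarrow$\ref{phiG2} simultaneously transfers well-definedness, and the predictable multiplier $\indicator{\H_X}$ commutes through every step. Assembling the three summands yields \eqref{eq:190114.3}.

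For the analytic case I would invoke the Wirtinger calculus. When $\indicator{\H_X}\xi$ is analytic at $0$, the Cauchy--Riemann equations state precisely that the conjugate Wirtinger derivatives $\partial\xi/\partial x_i^{*}$ vanish at $0$, and likewise every second Wirtinger derivative occupying a conjugate slot. Since $\check X$ lists $X$ and $X^{*}$ in interlaced positions while $\check D\xi(0)$ (respectively $\check D^2\xi(0)$) has nonzero entries only in the holomorphic positions, the contractions collapse to $D\xi(0)\cdot X$ and $D^2\xi(0)\cdot[X,X]^c$, and $\check D\xi(0)\check\id$ collapses to $D\xi(0)\id$; substituting these into \eqref{eq:190114.3} gives \eqref{eq:190703.1}, with $D\xi(0)$, $D^2\xi(0)$ now genuine complex derivatives.

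I expect the main obstacle to be the quadratic-variation term: correctly lining up the Kronecker structure and the two transposes so that the pre- and post-factors $Q$, $Q^{\top}$ cancel against $\Lambda_1$, $\Lambda_2$ under Lemma~\ref{L:200530}, and confirming that it is the \emph{bilinear} (rather than sesquilinear) convention for $[\,\cdot\,,\cdot\,]$ that makes $[\check X,\check X]^c=Q[\hat X,\hat X]^cQ^{\top}$ hold with a plain transpose. The first-order and jump terms, and the analytic reduction, are comparatively routine once this bookkeeping is pinned down.
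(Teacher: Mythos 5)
Your proposal is correct and follows essentially the same route as the paper's proof: your $Q=I_d\otimes P$ is exactly the inverse of the paper's matrix $\Sigma=\frac12 I_d\otimes\left[\begin{smallmatrix}1&1\\-i&i\end{smallmatrix}\right]$, the first-order and jump terms are handled by the same appeal to Proposition~\ref{P:integral} and Remark~\ref{R:200521.2}, the quadratic-variation term by the same application of Lemma~\ref{L:200530} with the same choice of $\Lambda_1=(Q^{-1})^\top$, $\Lambda_2=Q^{-1}$, and the analytic case by the standard vanishing of conjugate Wirtinger derivatives. The bookkeeping you flag as the main obstacle (bilinear covariation giving $[\check X,\check X]^c=Q[\hat X,\hat X]^cQ^\top$ with a plain transpose) is exactly what the paper's setup guarantees via $[RV,RV]=R[V,V]R^\top$.
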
 

\begin{proof}
Let us first prove \eqref{eq:190114.3}. To this end, we introduce the matrix 
$$\Sigma =\frac{1}{2} I_d \otimes \left[\begin{array}{cc}1 & 1\\-i &i\end{array}\right],$$
satisfying $\Sigma^{-1} = 2(\Sigma^*)^\top $. 
Then $\check D \xi = \hat D \xi \Sigma$ and $\check X = \Sigma^{-1} \hat X$. Thanks to Proposition~\ref{P:integral} and Remark~\ref{R:200521.2}, we then have $\indicator{\H_X}   \check D \xi (0) \in L(\check X)$ and 
$\indicator{\H_X}  \check D \xi (0) \cdot \check X = \indicator{\H_X}   \hat D \xi (0) \cdot \hat X$. By the same token, we also have $\check D \xi(0) \, \check \id = \hat D \xi(0) \, \hat \id$.
Next, note that
\begin{align*}
\check D^2  \xi(0) = \Sigma^{\top} \hat D^2  \xi(0) \Sigma; \qquad \bigs[\check{X},\check{X}\bigs] = \Sigma^{-1} \bigs[\hat{X},\hat{X}\bigs] (\Sigma^{-1})^\top.
\end{align*}
An application of Lemma~\ref{L:200530}  now concludes the proof of  \eqref{eq:190114.3}.

The simplifications in the analytic case follow from the standard properties of Wirtinger derivatives, see for example Remmert~\cite[Section~I.4]{remmert.91}. 
\end{proof}

We now provide two examples of complex-valued representations where the representing functions are not assumed analytic at 0. Recall the notation for Wirtinger derivatives in \eqref{eq:Wirtinger}.

\begin{example}[Quadratic covariation of represented semimartingales]
Let $X$ be a $\Cx$--valued semimartingale. Then by Proposition~\ref{P:1}\ref{P:1:iii} and Corollary~\ref{C:200522}, for  $\xi, \psi \in \I^1(X)$, we have 
$[\xi \circ X, \psi \circ X] = \xi \psi\circ X$. 
In the explicit form \eqref{eq:190114.3}, this is written as
\begin{align*}
	[\xi \circ X, \psi \circ X] ={}& \partial_x \xi(0) \partial_x\psi(0) \cdot [X,X]^c 
	+ \partial_{x^{*}} \xi(0) \partial_{x^{*}}\psi(0)\cdot [X^*,X^*]^c\\
	& +(\partial_x \xi(0) \partial_{x^{*}}\psi(0)+ \partial_{x^{*}}\xi(0)\partial_{x}\psi(0))\cdot [X,X^*]^c + \xi \psi \star\mu^X.
\end{align*}
This formula seems very intuitive. The first three terms capture the continuous covariation of $\xi \circ X$ and $\psi \circ X$. The last term is the pure-jump component which multiplies together the jumps in $\xi \circ X$ and $\psi \circ X$.
\qed
\end{example}

\begin{example}[Explicit complex-valued expression for $(\abs{1+\id}^\alpha-1)\circ X$, $\alpha\in\Cx$, $\Delta X\neq -1$]\label{E:190302}
Consider the predictable function $\xi=\abs{1+\id}^\alpha-1$, which on a sufficiently small neighbourhood of zero satisfies
$$\abs{1+\id}^\alpha-1 =  (1+\id)^{\frac{\alpha}{2}}(1+\id^*)^{\frac{\alpha}{2}}-1.$$ 
On this neighbourhood, apply formal Wirtinger calculus to obtain
\begin{align*}
\partial_{x} \xi &{}= \frac{\alpha}{2}(1+\id)^{\frac{\alpha}{2}-1}(1+\id^*)^{\frac{\alpha}{2}};\qquad \qquad \quad \ \ \,
\partial_{x^*} \xi  = \frac{\alpha}{2}(1+\id)^{\frac{\alpha}{2}}(1+\id^*)^{\frac{\alpha}{2}-1}; \\
\partial_{xx}^2 \xi &{}= \frac{\alpha}{2}\left(\frac{\alpha}{2}-1\right)(1+\id)^{\frac{\alpha}{2}-2}(1+\id^*)^{\frac{\alpha}{2}};\quad
\partial_{x^*\!x^*}^2 \xi = \frac{\alpha}{2}\left(\frac{\alpha}{2}-1\right)(1+\id)^{\frac{\alpha}{2}}(1+\id^*)^{\frac{\alpha}{2}-2};\\
& \qquad\qquad\qquad\qquad\qquad\quad   \partial_{xx^*}^2 \xi = \frac{\alpha^2}{4}\abs{1+\id}^{\alpha-1}.
\end{align*}
Next, $\xi \in \Uni$, hence $\xi \in \I(X)$ for \emph{any} $\Cx$--valued semimartingale $X$ with $X\in \Dom(\xi)$,  in particular for any $X$ with $\Delta X\neq -1$. Formula \eqref{eq:190114.3} now yields
\begin{align*} 
(\abs{1+\id}^\alpha-1)\circ X ={}& \frac{\alpha}{2} \cdot (X+X^*)
+\frac{\alpha}{4}\left(\frac{\alpha}{2}-1\right)([X,X]^c+[X^*,X^*]^c)+\frac{\alpha^2}{4}[X,X^*]\\
&\quad + \left(\abs{1+\id}^\alpha-1-\frac{\alpha}{2}(\id+\id^*)\right) * \mu^X\\
={}&\  \alpha\cdot \Re X
+\frac{\alpha}{2}(\alpha-1)[\Re X,\Re X]^c+\frac{\alpha}{2}[\Im X,\Im X]^c\\
&\quad + \left(\abs{1+\id}^\alpha-1-\alpha\Re \id)\right) * \mu^X.
\end{align*}
We continue discussing this setup in Example~\ref{E:200609} below and apply it in \cite[Examples~4.3 and 4.4]{crIII}. There one obtains the Mellin transform of the positive and negative parts of a signed stochastic exponential of a process with independent increments.
\qed
\end{example}

\setlength{\abovedisplayskip}{5pt}
\setlength{\belowdisplayskip}{5pt}

\section{Specific examples of the semimartingale representation} \label{S:4}
 
 \subsection{Generic applications}
 If $X$ is a $\Cx$--valued semimartingale, then by \cite[Th\'eor\`eme~1]{doleans-dade.70} (see also \cite[I.4.60]{js.03}) the stochastic exponential $\Exp(X)$ of $X$ is the unique solution to the stochastic differential equation 
\begin{equation}\label{eq:190701.9}
 \Exp(X)=1+\Exp(X)_{-}\cdot X.
\end{equation}
 The stochastic logarithm $\Log(X)$ of a semimartingale $X$ that can hit zero only by a jump (but not continuously) and is absorbed in zero is given by
\begin{align*}
	\Log(X) = \frac{1}{X_-}\indicator{\{X_-\neq 0\}} \cdot X,
\end{align*}
where 
 $\sfrac{\indicator{\{X_{t-}\neq 0\}}}{X_{t-}}$ is defined to be zero on the set  $\{X_{t-}=0\}$, for all $t\geq 0$.

All representing functions shown in this subsection belong to the universal class $\Uni$  
and can therefore be applied to any semimartingale whose jumps are compatible with the given function (Proposition~\ref{P: universal}).
The simplified stochastic calculus yields many identities by straightforward computations. Using only the It\^o--Meyer change of variables formula, those identities would involve convoluted arguments. As an example, we now establish a generalization of Yor's formula and its converse (see \cite[II.8.19--20]{js.03}).
\begin{proposition}[Generalized Yor formula and its converse]\label{P:190701b}{}
Consider a $\Cx^2$--valued semimartingale $X$ and  $\alpha$, $\beta\in\Cx$. 
\begin{enumerate}[label={\rm(\arabic{*})}, ref={\rm(\arabic{*})}]
\item\label{P:190701b.1}  Assume that the following conditions hold.
\begin{itemize}
	\item If $\alpha \in \{0, -1, -2, \ldots\}$, then $\Delta X^{(1)} \neq -1$. 
	\item If $\alpha \in \Cx \setminus \Z$, then $\Re \Exp\bigs(X^{(1)}\bigs)>0$. 
\end{itemize}
Assume that these two conditions also hold with $\alpha$ and $X^{(1)}$ replaced by $\beta$ and $X^{(2)}$, respectively.
We then have
\begin{equation}\label{eq:190701.5}
\Exp \bigs(X^{(1)}\bigs)^\alpha\Exp \bigs(X^{(2)}\bigs)^\beta = \Exp \left( \left((1+\id_1)^\alpha (1+\id_2)^\beta -1\right)\circ X \right),
\end{equation}
where complex powers with exponent in $\Cx\setminus\Z$ are defined via the principal value logarithm.
In particular, with $\alpha = \beta = 1$ we have
\begin{align} \label{eq:200528}
\Exp \bigs(X^{(1)}\bigs) \Exp \bigs(X^{(2)}\bigs) = \Exp \left(X^{(1)} + X^{(2)} + \bigs[X^{(1)},X^{(2)}\bigs]\right).
\end{align}
\item\label{P:190701b.2}
Assume next the following conditions.
\begin{itemize}
	\item If $\alpha \in \N $, then $X^{(1)}$ does not reach zero continuously and is absorbed in zero.
	\item If $\alpha \in \{0, -1, -2, \ldots\} $, then $X^{(1)} \neq 0$ and $X^{(1)}_- \neq 0$. 
	\item If $\alpha \in \Cx \setminus \Z$, then $\Re X^{(1)} > 0$ and $\Re X^{(1)}_- > 0$. 
\end{itemize}
Assume that these three conditions also hold with $\alpha$ and $X^{(1)}$ replaced by $\beta$ and $X^{(2)}$, respectively. Finally, denote by $\tau$ the first time $X^{(1)}$ or $X^{(2)}$ hit zero.
We then have
\begin{equation} \label{eq:flight_to_BKK}
\Log \bigs(\bigs(X^{(1)}\bigs)^\alpha \bigs(X^{(2)}\bigs)^\beta\bigs) 
=   \left((1+\id_1)^\alpha (1+\id_2)^\beta -1\right)\circ \bigs(\Log\bigs(X^{(1)}\bigs)^\tau,\Log\bigs(X^{(2)}\bigs)^\tau\bigs).
\end{equation}
In particular, with $\alpha = \beta = 1$ and $X^{(1)},X^{(2)}$ not hitting zero we have 
\[
\Log \bigs(X^{(1)}  X^{(2)}\bigs) = \Log \bigs(X^{(1)}\bigs) + \Log \bigs(X^{(2)}\bigs) + \bigs[\Log \bigs(X^{(1)}\bigs), \Log \bigs(X^{(2)}\bigs)\bigs].
\]
\end{enumerate}
\end{proposition}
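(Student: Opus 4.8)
The plan is to prove both parts with one device: represent the relevant ``level'' process in terms of $X$ (respectively, in terms of the stochastic logarithms), push the power function through the change-of-variables and composition rules of Section~\ref{S:3}, and recognise the outcome as a stochastic exponential (respectively, logarithm). Throughout write $\xi = (1+\id_1)^\alpha(1+\id_2)^\beta-1$. Since $\xi$ is analytic near $0$ and vanishes there, $\xi\in\Uni$ by direct inspection of Definition~\ref{D:210911}, so $\xi\in\I(V)$ for every semimartingale $V$ with $V\in\Dom(\xi)$ by Proposition~\ref{P: universal}. The three regimes of $\alpha$ (and, symmetrically, of $\beta$) are calibrated precisely so that the relevant power function $f(v_1,v_2)=v_1^\alpha v_2^\beta$ is twice continuously differentiable on an open set $\U$ containing the range of the level process, and so that the driving semimartingale lies in $\Dom(\xi)$.

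For Part~\ref{P:190701b.1}, set $S^{(i)}=\Exp\bigs(X^{(i)}\bigs)$ and $S=\bigs(S^{(1)},S^{(2)}\bigs)$. Equation~\eqref{eq:190701.9} together with Proposition~\ref{P:integral} gives the representation $S=S_0+\phi\circ X$ with $S_0=(1,1)$ and the linear representing function $\phi=\bigs(S^{(1)}_-\id_1,\,S^{(2)}_-\id_2\bigs)$. I would then apply the change-of-variables Proposition~\ref{P:Ito} with $f(v_1,v_2)=v_1^\alpha v_2^\beta$ to get $P:=\bigs(S^{(1)}\bigs)^\alpha\bigs(S^{(2)}\bigs)^\beta=1+\xi^{f,S}\circ S$, and compose with $\phi$ using Corollary~\ref{C:200522} (applicable because $\xi^{f,S}\in\Uni$). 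The one genuine computation is the composed jump function, which collapses to
\[
\xi^{f,S}(\phi(x)) = f\bigs(S^{(1)}_-(1+x_1),\, S^{(2)}_-(1+x_2)\bigs) - f\bigs(S^{(1)}_-,\, S^{(2)}_-\bigs) = P_-\,\xi(x).
\]
Hence $P=1+(P_-\xi)\circ X = 1+P_-\cdot(\xi\circ X)$, the last step pulling the locally bounded predictable scalar $P_-$ outside the representation via Proposition~\ref{P:integral}. Since $P_0=1$, uniqueness of the solution to \eqref{eq:190701.9} yields $P=\Exp(\xi\circ X)$, which is \eqref{eq:190701.5}. The special case \eqref{eq:200528} then follows by evaluating $\xi\circ X$ for $\alpha=\beta=1$, where $\xi(x)=x_1+x_2+x_1x_2$, via linearity (Proposition~\ref{P:1_J}\ref{P:1_J:i}) and the identities $\id_i\circ X=X^{(i)}-X^{(i)}_0$ and $(\id_1\id_2)\circ X=\bigs[X^{(1)},X^{(2)}\bigs]$ of Proposition~\ref{P:1}\ref{P:1:iii}.

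For Part~\ref{P:190701b.2} I would run the mirror-image argument \emph{directly} on $X$ rather than reducing to Part~\ref{P:190701b.1}, because the principal-value power of $X^{(i)}$ need not factor through that of $X^{(i)}_0\Exp\bigs(\Log\bigs(X^{(i)}\bigs)\bigs)$ up to a constant, so the hypothesis $\Re X^{(1)}>0$ is the one matching the change of variables. Writing $L^{(i)}=\Log\bigs(X^{(i)}\bigs)^\tau$ and $L=\bigs(L^{(1)},L^{(2)}\bigs)$, the definition of $\Log$ gives the representation $X^\tau=X_0+\bigs(X^{(1)}_-\id_1,\,X^{(2)}_-\id_2\bigs)\circ L$ on $\lc 0,\tau\rc$. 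Applying Proposition~\ref{P:Ito} and Corollary~\ref{C:200522} exactly as before yields $Q^\tau=Q_0+Q_-\cdot(\xi\circ L)$ for $Q=\bigs(X^{(1)}\bigs)^\alpha\bigs(X^{(2)}\bigs)^\beta$, and dividing out $Q_-$ through the definition of $\Log$ gives $\Log(Q^\tau)=\xi\circ L$. Finally I would observe that $\Log(Q)=\Log(Q^\tau)$: once $X^{(1)}$ or $X^{(2)}$ reaches zero at $\tau$ and is absorbed, $Q_-$ vanishes and the stochastic logarithm stops moving, so the $\tau$ inside the left-hand side of \eqref{eq:flight_to_BKK} is immaterial. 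The $\alpha=\beta=1$ specialisation is immediate as in Part~\ref{P:190701b.1}.

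The main obstacle, and where I would spend the most care, is the bookkeeping of the three regimes of $\alpha$ and $\beta$: checking in each case that $f$ is $C^2$ on a neighbourhood $\U$ of the range (including left limits) of the level process---entire for positive integers; away from $0$ for nonpositive integers, which is where $\Delta X^{(1)}\neq-1$ (resp.\ $X^{(1)},X^{(1)}_-\neq0$) enters; and off the branch cut for non-integers, which is where $\Re\Exp\bigs(X^{(1)}\bigs)>0$ (resp.\ $\Re X^{(1)}>0$) enters---and that the driving semimartingale lies in $\Dom(\xi)$. In Part~\ref{P:190701b.2} the secondary subtlety is the behaviour at and after $\tau$, handled by the absorption argument above. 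Everything else is the clean algebraic collapse $\xi^{f,\cdot}(\phi)=(\text{level})_-\,\xi$ made possible by the calculus.
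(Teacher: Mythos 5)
Your proposal is correct, and it splits relative to the paper. Part~\ref{P:190701b.1} is essentially the paper's own argument verbatim: represent $\Exp\bigs(X^{(k)}\bigs)$ via \eqref{eq:190701.9} and Proposition~\ref{P:integral}, change variables with $f=\id_1^\alpha\id_2^\beta$ on per-coordinate domains, compose, collapse the composed jump function to $P_-\xi$, and invoke uniqueness for \eqref{eq:190701.9}. Part~\ref{P:190701b.2} is where you genuinely diverge: the paper sets $Y^{(k)}=\Log\bigs(X^{(k)}\bigs)$, writes $\bigs(X^{(1)}\bigs)^\alpha\bigs(X^{(2)}\bigs)^\beta=\Exp\bigs(Y^{(1)}\bigs)^\alpha\Exp\bigs(Y^{(2)}\bigs)^\beta$, applies \eqref{eq:190701.5} to $Y$, and takes stochastic logarithms, whereas you rerun the change-of-variables/composition argument directly on $X^\tau$ represented in terms of $L=\Log(X)^\tau$ and then divide out $Q_-$. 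Your stated reason for avoiding the reduction is sound and in fact touches a real weak point of the paper's route: since $\Exp\bigs(Y^{(k)}\bigs)=X^{(k)}/X^{(k)}_0$, invoking Part~\ref{P:190701b.1} for $Y$ with $\alpha\in\Cx\setminus\Z$ requires $\Re\bigs(X^{(1)}/X^{(1)}_0\bigs)>0$, which does \emph{not} follow from $\Re X^{(1)}>0$ (take $X^{(1)}_0=\epsilon+i$ and $X^{(1)}$ moving to $\epsilon-i$), and the displayed equality of powers holds only up to constants and after a principal-branch factorisation argument. The paper's reduction can be salvaged --- under the Part~\ref{P:190701b.2} hypotheses the quotients $X^{(k)}/X^{(k)}_0$ and their left limits stay off the cut $(-\infty,0]$, so the \emph{proof} (rather than the statement) of Part~\ref{P:190701b.1} applies on the slit plane --- but your direct argument, whose domain is matched exactly by the stated hypotheses and which treats absorption at $\tau$ explicitly, buys self-containedness and avoids the branch-cut bookkeeping; the paper's route buys brevity by reusing Part~\ref{P:190701b.1}. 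One caveat applying equally to your write-up and the paper's: for non-integer exponents the collapse $\xi^{f,\cdot}(\phi)=(\text{level})_-\,\xi$ is not a global identity of predictable functions, since principal powers do not factor over products in general; it holds on a neighbourhood of $x=0$ and at the realised jumps, which is all that the \'Emery formula requires, and this deserves one explicit sentence in the regime bookkeeping you already flag.
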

\begin{proof}
To start, 
from \eqref{eq:190701.9} for example, recall that by Proposition~\ref{P:integral}
\begin{align}\label{eq:200604.4}
	\Exp\bigs(X^{(k)}\bigs) = 1 + \Exp\bigs(X^{(k)}\bigs)_-\id_k \circ X,\qquad k\in\{1,2\}.
\end{align}
Now, the change of variables formula in Proposition~\ref{P:Ito} applied to the function 
$f= \id_{1}^{\alpha} \id_{2}^\beta$ over an appropriate domain $\mathcal{U}$ (obtained as the Cartesian product of appropriate one-dimensional domains, i.e., $\Cx$ for $\alpha\in\N$; 
$\Cx\setminus\{0\}$ for $\alpha\in \{0,-1,\ldots\}$; $\{z \in \Cx: \Re z > 0\}$ for $\alpha\in\Cx\setminus \Z$; and likewise with $\beta$ in place of $\alpha$) 
combined with Proposition~\ref{P:integral} and Theorem~\ref{T:composition0} yield
\begin{align*}
\Exp\bigs(X^{(1)}\bigs)^\alpha\Exp\bigs(X^{(2)}\bigs)^\beta
	&= 1 + \left(\Exp\bigs(X^{(1)}\bigs)_{-}^\alpha\Exp\bigs(X^{(2)}\bigs)_{-}^\beta\right)\cdot
	\left( ( 1+\id_{1})^\alpha( 1+\id_{2})^\beta-1\right)\circ X.
\end{align*}
The uniqueness of strong solutions to the stochastic differential equation~\eqref{eq:190701.9} then yields \eqref{eq:190701.5}.

Next, define $Y^{(k)} = \Log(X^{(k)})$ for $k \in \{1,2\}$. Then from \eqref{eq:190701.5} we obtain
\[
\bigs(X^{(1)}\bigs)^\alpha \bigs(X^{(2)}\bigs)^\beta = \Exp \bigs(Y^{(1)}\bigs)^\alpha\Exp \bigs(Y^{(2)}\bigs)^\beta 
= \Exp \bigs( \bigs((1+\id_1)^\alpha (1+\id_2)^\beta -1\bigs)\circ Y\bigs).
\]
Taking stochastic logarithms on both sides yields \eqref{eq:flight_to_BKK}.
\end{proof}
With $\alpha = 1$ and $\beta = -1$, identity \eqref{eq:190701.5} yields a $\Cx$--valued extension of Equation~(1-5) in M\'emin~\cite{memin.78}. If we additionally assume $X^{(1)} = 0$ and $Y = X^{(2)}$,  we get 
\[
	\Log\left(\frac{1}{\Exp (Y)}\right) = \left(\frac{1}{1 + \id} - 1 \right) \circ  Y = Y_0 - Y + [Y, Y]^c + \frac{\id^2}{1+\id} * \mu^{Y}.
\]
Here we used that the function $(1+\id)^{-1}-1\in\Uni$ is analytic at $0$ in conjunction with Propositions~\ref{P: universal} and 
\ref{P:190114.1}.   See also Larsson and Ruf \cite[Theorem~4.1]{larsson.ruf.19} for an $\R$--valued version.

Parts of the following proposition, restricted to real-valued semimartingales, appear in  Jacod and Shiryaev~\cite[II.8.8--12]{js.03}.
\begin{proposition}[Identities involving natural\,/\,stochastic exponential\,/\,logarithm]\label{P:190701}
Let $X$ denote a  $\Cx$--valued semimartingale.  Then
\begin{align} 
\Log \bigs(\e^{X}\bigs)&=(\e^{\id}-1)\circ X;\label{eq:190701.1}\\
\lvert\Exp (X)\rvert&= \Exp\left(\left(\lvert 1+\id\rvert - 1\right) \circ X\right).\label{eq:190228.1}
\end{align}
If $\Delta X\neq -1$, then 
\begin{align} 
\Exp (X) &= \e^{\log (1+\id) \circ X};  \label{eq:190128.2}\\
\log\lvert\Exp (X)\rvert &= \log \lvert 1+\id\rvert \circ X, \nonumber
\end{align}
where $\log$ denotes again the principal value logarithm. 
 Moreover, if $\Re \Exp(X)>0$ then  
\begin{equation*}
\log \Exp(X) = \log (1+\id)\circ X.
\end{equation*}

\end{proposition}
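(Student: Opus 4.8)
The plan is to reduce every identity to a combination of three devices — the It\^o--Meyer change of variables (Proposition~\ref{P:Ito}), the representation of stochastic integrals (Proposition~\ref{P:integral}), and the composition rules (Theorems~\ref{T:composition0} and \ref{T:composition}) — together with the defining equation \eqref{eq:190701.9} of $\Exp$ and its uniqueness. A recurring auxiliary fact, which I would record first, is the inverse relationship: if $S$ is a semimartingale with $S_0 = 1$ that hits zero only by a jump and is then absorbed, and if $\Log(S) = W$, then $S = \Exp(W)$. This is immediate from the definition of $\Log$ and from \eqref{eq:190701.9}, since $W = \frac{1}{S_-}\indicator{\{S_-\neq 0\}}\cdot S$ rearranges to $S = 1 + S_-\cdot W$, so $S$ solves the stochastic exponential equation and uniqueness finishes the argument.

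For \eqref{eq:190701.1} I would apply Proposition~\ref{P:Ito} to $f=\e^\id$, obtaining $\e^X = \e^{X_0} + \xi^{f,X}\circ X$ with $\xi^{f,X} = \e^{X_-}(\e^\id-1)$. Since $\e^X$ never vanishes, $\Log(\e^X) = \frac{1}{\e^{X_-}}\cdot\e^X$; by Proposition~\ref{P:integral} and the linear case of the composition theorem (Remark~\ref{R:200530}) this integral equals $\frac{1}{\e^{X_-}}\xi^{f,X}\circ X = (\e^\id-1)\circ X$, because $\xi^{f,X}/\e^{X_-} = \e^\id-1$.

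Next, for \eqref{eq:190128.2} on $\{\Delta X\neq-1\}$ I would set $Z = \log(1+\id)\circ X$ and compute $\Log(\e^Z)$ using \eqref{eq:190701.1} and Theorem~\ref{T:composition0}: $\Log(\e^Z) = (\e^\id-1)\circ Z = (\e^{\log(1+\id)}-1)\circ X = \id\circ X = X-X_0$. As $\e^Z$ is non-vanishing with $(\e^Z)_0=1$, the inverse relationship gives $\e^Z = \Exp(X-X_0) = \Exp(X)$. Taking absolute values in \eqref{eq:190128.2} and using $\log\abs{\e^W} = \Re W$ together with $\Re(\xi\circ X) = (\Re\xi)\circ X$ (from Proposition~\ref{P:1}\ref{P:1:iv} and the linearity in Proposition~\ref{P:1_J}\ref{P:1_J:i}) and $\Re\log(1+\id) = \log\abs{1+\id}$ yields $\log\abs{\Exp(X)} = \log\abs{1+\id}\circ X$. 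For the final identity, on $\{\Re\Exp(X)>0\}$ the principal logarithm is smooth on the open right half-plane, so I would apply Proposition~\ref{P:Ito} with $f = \log$ to $Y=\Exp(X) = 1 + Y_-\id\circ X$ and compose with the linear inner map $Y_-\id$; the key point is that $\log(Y_-(1+x)) - \log(Y_-) = \log(1+x)$ holds with no branch correction because $Y_-$ and $1+\Delta X = Y/Y_-$ both lie in the right half-plane, giving $\log\Exp(X) = \log(1+\id)\circ X$.

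Finally, for \eqref{eq:190228.1}, the case $\Delta X\neq-1$ follows either by applying Proposition~\ref{P:Ito} to $f=\abs{\id}$ on $\Cx\setminus\{0\}$ and composing with $Y_-\id$, or by combining $\abs{\Exp(X)} = \e^{\log\abs{1+\id}\circ X}$ with the identity $\e^W = \Exp((\e^\id-1)\circ W)$, itself a consequence of \eqref{eq:190701.1} and the inverse relationship. The main obstacle is that \eqref{eq:190228.1} is asserted \emph{without} the restriction $\Delta X\neq-1$, so I must cover the case where $\Exp(X)$ reaches zero, which it can do only by a jump $\Delta X = -1$, after which it is absorbed. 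I would handle this by stopping at $\tau = \inf\{t:\Exp(X)_t = 0\}$: removing the single offending jump at $\tau$ produces a process with no jump of size $-1$, to which the previous case applies, so both sides agree on $\lc 0,\tau\lc$. At $\tau$ both sides jump to zero, since the right-hand exponent has jump $\abs{1+\Delta X_\tau}-1 = -1$ there, and thereafter both are absorbed at zero; matching the jump at $\tau$ and invoking absorption gives equality on all of $[0,\infty)$. Verifying this absorption/stopping step cleanly, and checking the branch conventions for the principal logarithm, are the only delicate points; everything else is bookkeeping with the composition calculus.
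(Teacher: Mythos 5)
Your overall plan is sound, and parts of it genuinely diverge from the paper's proof. For \eqref{eq:190701.1} and \eqref{eq:190128.2} you follow essentially the paper's path (Proposition~\ref{P:Ito}, composition, uniqueness of the stochastic exponential SDE). The genuine differences are two. First, you obtain $\log\abs{\Exp(X)} = \log\abs{1+\id}\circ X$ by taking real parts in \eqref{eq:190128.2}, using $(\xi\circ X)^*=\xi^*\circ X$ and linearity of $\circ$ in $\xi$; the paper instead first proves $\log Y = \log(1+\id)\circ\Log(Y)$ for positive $Y$ and then composes with \eqref{eq:190228.1}. Your route is shorter and valid. Second, for \eqref{eq:190228.1} without the restriction $\Delta X\neq -1$, the paper splits $X = Y^{(1)}+Y^{(2)}$ with $Y^{(1)}=\id\indicator{\id\neq-1}\circ X$ and $Y^{(2)}=-\indicator{\id=-1}\circ X$ and applies the Yor formula \eqref{eq:200528} twice, handling \emph{all} jumps of size $-1$ algebraically; you instead stop at $\tau=\inf\{t:\Exp(X)_t=0\}$ and use absorption at zero of both sides, so that only the first such jump matters. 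Your argument works (jumps with $\abs{\Delta X}=1$ cannot accumulate, so $\tau$ is attained and is a jump time when finite, and both $\abs{\Exp(X)}$ and $\Exp((\abs{1+\id}-1)\circ X)$ vanish at $\tau$ and stay at zero), but it requires the path-by-path patching details you acknowledge; the paper's decomposition avoids them entirely. A small mis-citation along the way: in \eqref{eq:190701.1} the composition you need has a linear \emph{outer} function $\e^{-X_-}\id$ and non-linear inner function $\e^{X_-}(\e^{\id}-1)$, whereas Remark~\ref{R:200530} covers the case of a linear \emph{inner} function; cite Theorem~\ref{T:composition0} (both functions are in $\Uni$, since $\e^{-X_-}$ is locally bounded) or Corollary~\ref{C:200522} instead.

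One justification is actually wrong as stated: for the case $\Re\Exp(X)>0$ you assert that $1+\Delta X = \Exp(X)/\Exp(X)_-$ lies in the right half-plane. This is false in general; e.g., $\Exp(X)_-=1+10i$ and $\Exp(X)=1-10i$ both have positive real part, yet their quotient $(-99-20i)/101$ does not. The identity you need, $\log\bigs(\Exp(X)_-(1+\Delta X)\bigs)-\log\Exp(X)_-=\log(1+\Delta X)$, still holds, but for a different reason: since $\Re\Exp(X)>0$ and $\Re\Exp(X)_-\geq 0$ with $\Exp(X)_-\neq 0$, both principal arguments lie in $[-\sfrac{\pi}{2},\sfrac{\pi}{2}]$, so $\arg\Exp(X)-\arg\Exp(X)_-\in(-\pi,\pi)$; this number is congruent modulo $2\pi$ to the principal argument of $1+\Delta X$, which lies in $(-\pi,\pi]$, and two such numbers must be equal. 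Combining this jump identity with the observation that the two predictable functions also coincide on an $(\omega,t)$--dependent neighbourhood of $0$ (no branch crossing for small $x$, since $\Exp(X)_-\notin(-\infty,0]$), so the derivative terms match, recovers exactly the paper's step ``the last equality follows by comparing the respective \'Emery formulae.'' With that local repair, your proof is complete.
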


\begin{proof}  Apply the change of variables formula in Proposition~\ref{P:Ito} to the function $\e^{\id}$ to obtain 
\begin{equation*}
\e^{X}= \e^{X_0}+\bigs(\e^{X_{-}+\id}-\e^{X_{-}}\bigs)\circ X.
\end{equation*}
As $\sfrac{1}{\e^{X_{-}}}$ is locally bounded, Proposition~\ref{P:integral} in conjunction with Theorem~\ref{T:composition0} yield
\begin{equation*}
\Log \bigs(\e^{X}\bigs) =\e^{-X_{-}} \cdot \e^X= \e^{-X_{-}} \bigs(\e^{X_{-}+\id}-\e^{X_{-}}\bigs)\circ X,
\end{equation*}
which on simplification gives \eqref{eq:190701.1}.

Next we will prove~\eqref{eq:190228.1} under the additional assumption $\Delta X\neq -1$. Since the function $\abs{\id}$ is twice continuously real-differentiable on $\mathcal U = \Cx \setminus \{0\}$ and $\Exp(X)_-,\Exp(X)$ take values in $\mathcal{U}$, Propositions~\ref{P:integral} and \ref{P:Ito}, representation~\eqref{eq:200604.4}, and Theorem~\ref{T:composition0} yield
\begin{align*}
	\bigs\lvert\Exp(X)\bigs\rvert
	={}& \left(\left\lvert \Exp(X)_- + \id\right\rvert - \left\lvert\Exp(X)_-\right\rvert\right) \circ \Exp(X) 
	=\left(\left\lvert \Exp(X)_- + \Exp(X)_-\id\right\rvert - \left\lvert\Exp(X)_-\right\rvert\right) \circ X\\
	={}&\left(\left\lvert \Exp(X)_-\right\rvert (\left\lvert1 + \id\right\rvert - 1)  \right) \circ X
	=\left\lvert \Exp(X)_-\right\rvert\cdot \left( (\left\lvert1 + \id\right\rvert - 1)   \circ X\right),
\end{align*}
therefore \eqref{eq:190228.1} holds in this special case.

Define next
\begin{equation}\label{eq:190306.1}
Y^{(1)}=\id\indicator{\id\neq-1}\circ X\qquad\text{and}\qquad Y^{(2)}=-\indicator{\id= -1}\circ X.
\end{equation} 
Observe that $X=Y^{(1)}+Y^{(2)}$, 
$[Y^{(1)},Y^{(2)}]=0$, and $\lvert\Exp(Y^{(2)})\rvert=\Exp(Y^{(2)})$ 
as the latter only takes on values 0 and 1. The Yor formula in \eqref{eq:200528} now yields
\begin{equation}\label{eq:200604.3}
\abs{\Exp(X)}=\mathopen{\bigs|}\Exp\bigs(Y^{(1)}\bigs)\Exp\bigs(Y^{(2)}\bigs)\mathclose{\bigs|}
=\mathopen{\bigs|}\Exp\bigs(Y^{(1)}\bigs)\mathclose{\bigs|}\Exp\bigs(Y^{(2)}\bigs).
	\end{equation}
Moreover,  note that $\Delta Y^{(1)}\neq-1$ hence by the special case of \eqref{eq:190228.1} shown earlier we have
\begin{equation}\label{eq:190228.2}
\lvert\Exp(Y^{(1)})\rvert
= \Exp\left(\left(\lvert 1+\id\rvert - 1\right) \circ Y^{(1)}\right)
=\Exp\bigs((\abs{1+\id\indicator{\id\neq-1}}-1)\circ X\bigs),
\end{equation}
where the second equality follows from \eqref{eq:190306.1} and Theorem~\ref{T:composition0}. 
 Equations \eqref{eq:190306.1}--\eqref{eq:190228.2} now yield
$$
\abs{\Exp(X)} 
	=\Exp\bigs((\abs{1+\id\indicator{\id\neq-1}}-1)\circ X\bigs)\Exp\left(-\indicator{\id= -1}\circ X\right)
$$
 and a second application of the Yor formula \eqref{eq:200528} concludes the proof of \eqref{eq:190228.1} in full generality.

Assume from now on that $\Delta X \neq -1$. 
Observe that $\log(1+\id)$ is the right-inverse of the function $\e^{\id}-1$ over the domain $\Cx \setminus \{-1\} $ and that $\log(1+\id) \in \Uni \cap \I(X)$. We may therefore define $Y=\log(1+\id) \circ X$. 
From \eqref{eq:190701.1} and the composition Theorem~\ref{T:composition0} one obtains
$$\Log\left(\e^{\log(1+\id) \circ X}\right) =  \Log \bigs(\e^Y\bigs)=\bigs(\e^{\id}-1\bigs)\circ Y 
= \bigs(\e^{\log(1+\id)}-1\bigs)\circ X = X-X_0,$$
which yields \eqref{eq:190128.2}.

Finally, for a semimartingale $Y$ satisfying $Y>0$ and $Y_->0$ one obtains by Proposition~\ref{P:Ito}, the identity $Y = Y_-\id\circ \Log(Y)$, and Theorem~\ref{T:composition0} that
\begin{equation*}\label{eq:200606.2}
\begin{split}
	\log Y ={}& \left(\log(Y_-+\id)-\log(Y_-)\right)\circ Y\\
 ={}& \left(\log(Y_-+Y_-\id)-\log(Y_-)\right)\circ \Log(Y) = \log(1+\id)\circ \Log(Y),
\end{split}
\end{equation*}
hence
\[
	\log\lvert\Exp (X)\rvert = \log\Exp\left(\left(\lvert 1+\id\rvert - 1\right) \circ X\right)
		= \log(1 + \id) \circ \left(\left(\lvert 1+\id\rvert - 1\right) \circ X\right) = \log\abs{1+\id} \circ X,
\]
again by composition.

Consider now $X$ such that $\Re \Exp(X)>0$, hence $\Re \Exp(X)_-\geq 0$ and $\Exp(X)_-\neq 0$. As in the previous step, by Proposition~\ref{P:Ito} and Theorem~\ref{T:composition0} one obtains
\begin{align*} 
\log \Exp(X) ={}& (\log (\Exp(X)_-+\id) - \log \Exp(X)_-)\circ \Exp(X) \\
={}&(\log (\Exp(X)_-(1+\id)) - \log \Exp(X)_-)\circ X= \log (1 + \id)\circ X,
\end{align*}
where the last equality follows by comparing the respective \'Emery formulae.
\end{proof}

\begin{proposition}[Generalized Yor formula involving absolute values]\label{P:200604}{}
Consider a $\Cx^2$--valued semimartingale $X$ and  $\alpha$, $\beta\in\Cx$. 

\begin{enumerate}[label={\rm(\arabic{*})}, ref={\rm(\arabic{*})}]
\item\label{P:200604.1}  Assume that the following condition holds.
\begin{itemize}
	\item If $\alpha \in \Cx \setminus (0,\infty)$ then $\Delta X^{(1)} \neq -1$.
\end{itemize}
Assume that this condition also holds with $\alpha$ and $X^{(1)}$ replaced by $\beta$ and $X^{(2)}$, respectively.
We then have
\begin{equation}\label{eq:200604.1}
\mathopen{\bigs|}\Exp \bigs(X^{(1)}\bigs)\mathclose{\bigs|}^\alpha\mathopen{\bigs|}\Exp \bigs(X^{(2)}\bigs)\mathclose{\bigs|}^\beta 
= \Exp \left( \left(\abs{1+\id_1}^\alpha \abs{1+\id_2}^\beta -1\right)\circ X \right).
\end{equation}
\item\label{P:200604.2}
Assume next the following conditions. 
\begin{itemize}
\item If $\alpha \in (0,\infty) $ then $X^{(1)}$ does not reach zero continuously and is absorbed in zero.
\item If $\alpha \in \Cx \setminus (0,\infty)$ then $X^{(1)} \neq 0$ and $X^{(1)}_- \neq 0$.
\end{itemize}
Assume that these two conditions also hold with $\alpha$ and $X^{(1)}$ replaced by $\beta$ and $X^{(2)}$, respectively. Finally, denote by $\tau$ the first time $X^{(1)}$ or $X^{(2)}$ hit zero. We then have
\begin{equation} \label{eq:200604.2}
\Log \left(\mathopen{\bigs|}X^{(1)}\mathclose{\bigs|}^\alpha \mathopen{\bigs|}X^{(2)}\mathclose{\bigs|}^\beta\right) 
=   \left(\abs{1+\id_1}^\alpha \abs{1+\id_2}^\beta -1\right)\circ \bigs(\Log\bigs(X^{(1)}\bigs)^\tau,\Log\bigs(X^{(2)}\bigs)^\tau\bigs).
\end{equation}
\end{enumerate}
\end{proposition}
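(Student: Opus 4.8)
The plan is to follow the proof of Proposition~\ref{P:190701b} almost verbatim, replacing the holomorphic power $\id_1^\alpha\id_2^\beta$ by the real-analytic function $f=\abs{\id_1}^\alpha\abs{\id_2}^\beta$. For part~\ref{P:200604.1} I would first recall from \eqref{eq:200604.4} that $\Exp\bigs(X^{(k)}\bigs)=1+\Exp\bigs(X^{(k)}\bigs)_-\id_k\circ X$. Then I would apply the change-of-variables formula (Proposition~\ref{P:Ito}) to $f$ over the product domain $\U$, which in the $k$-th coordinate equals $\Cx\setminus\{0\}$ whenever the corresponding exponent lies in $\Cx\setminus(0,\infty)$ (the hypothesis $\Delta X^{(k)}\neq-1$ then guarantees that $\Exp\bigs(X^{(k)}\bigs)$ avoids $0$). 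Combining this with Proposition~\ref{P:integral} and Theorem~\ref{T:composition0} should yield, writing $W=\mathopen{\bigs|}\Exp\bigs(X^{(1)}\bigs)\mathclose{\bigs|}^\alpha\mathopen{\bigs|}\Exp\bigs(X^{(2)}\bigs)\mathclose{\bigs|}^\beta$,
\[
W = 1 + W_-\cdot\Bigl(\bigs(\abs{1+\id_1}^\alpha\abs{1+\id_2}^\beta-1\bigs)\circ X\Bigr),
\]
whence strong uniqueness for \eqref{eq:190701.9} delivers \eqref{eq:200604.1}. Membership of the representing function in $\Uni$ (hence in $\I(X)$ by Proposition~\ref{P: universal}), which makes all terms legitimate, follows exactly as in Example~\ref{E:190302}.

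The main obstacle is the case $\alpha\in(0,\infty)$ (and symmetrically $\beta$), where the hypotheses permit $\Delta X^{(1)}=-1$ and hence allow $\Exp\bigs(X^{(1)}\bigs)$ to hit $0$; there $\abs{\id_1}^\alpha$ fails to be twice differentiable for $\alpha<2$, so Proposition~\ref{P:Ito} cannot be applied on a domain containing $0$. I would resolve this exactly as in the proof of Proposition~\ref{P:190701}: split $X^{(1)}=Y^{(1)}+Z^{(1)}$ with $Y^{(1)}=\id_1\indicator{\id_1\neq-1}\circ X$ and $Z^{(1)}=-\indicator{\id_1=-1}\circ X$ (and likewise for $X^{(2)}$), so that $\Delta Y^{(1)}\neq-1$ while $\Exp\bigs(Z^{(1)}\bigs)$ takes only the values $0$ and $1$; then apply the already-established case to $Y^{(1)}$, note $\Exp\bigs(Z^{(1)}\bigs)^\alpha=\Exp\bigs(Z^{(1)}\bigs)$ for $\alpha>0$, and recombine through the Yor formula \eqref{eq:200528}. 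Matching the resulting representing functions pointwise (separately on $\{\id=-1\}$ and its complement) recovers $\abs{1+\id}^\alpha-1$, precisely as in the passage \eqref{eq:190306.1}--\eqref{eq:190228.2}.

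For part~\ref{P:200604.2} I would mirror the converse argument of Proposition~\ref{P:190701b}\ref{P:190701b.2}. Setting $Y^{(k)}=\Log\bigs(X^{(k)}\bigs)$, which already coincides with its stopping at $\tau$ because $X^{(k)}$ is absorbed at $0$ and $\Log$ freezes once $X^{(k)}_-=0$, the stated hypotheses on $X^{(k)}$ translate precisely into the part~\ref{P:200604.1} hypotheses for $Y$ (e.g.\ for $\alpha\in\Cx\setminus(0,\infty)$ the requirement $X^{(1)}\neq0$, $X^{(1)}_-\neq0$ is exactly $\Delta Y^{(1)}\neq-1$). Applying \eqref{eq:200604.1} to $Y$, using $\mathopen{\bigs|}\Exp\bigs(Y^{(k)}\bigs)\mathclose{\bigs|}=\abs{X^{(k)}}/\abs{X^{(k)}_0}$ together with the scale invariance $\Log(cZ)=\Log(Z)$ for constants $c\neq0$, and finally taking stochastic logarithms of both sides (so that $\Log\Exp(W)=W$ since $W_0=0$) would yield \eqref{eq:200604.2}.

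I expect the bulk of the genuine work to be the positive-exponent splitting in part~\ref{P:200604.1}; the verification that the piecewise representing functions agree and that the cross-bracket $\bigs[Y^{(1)},Z^{(1)}\bigs]$ vanishes is routine but must be carried out with care, and it is the one place where the argument genuinely departs from the holomorphic template of Proposition~\ref{P:190701b}.
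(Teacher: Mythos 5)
Your part~\ref{P:200604.1} is sound, but it takes a genuinely different route from the paper. The paper proves \eqref{eq:200604.1} without any new It\^o/SDE work: it first invokes the already-established identity \eqref{eq:190228.1} to write each $\abs{\Exp\bigs(X^{(k)}\bigs)}$ as the \emph{strictly positive} stochastic exponential $\Exp\bigs(\left(\abs{1+\id}-1\right)\circ X^{(k)}\bigs)$ (positivity comes from $\Delta X^{(k)}\neq-1$ and supplies the hypothesis $\Re\Exp(\cdot)>0$ of Proposition~\ref{P:190701b}\ref{P:190701b.1}), then applies the generalized Yor formula to these positive exponentials and uses Theorem~\ref{T:composition0} to collapse the composition $\bigs((1+\id_1)^\alpha(1+\id_2)^\beta-1\bigs)\bigs(\abs{1+\id_1}-1,\abs{1+\id_2}-1\bigs)$ into $\abs{1+\id_1}^\alpha\abs{1+\id_2}^\beta-1$; the case $\alpha\in(0,\infty)$ is then dispatched by noting that $\abs{\id}^\alpha$ is defined at zero. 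You instead rerun the proof template of Proposition~\ref{P:190701b} directly on the real-analytic function $\abs{\id_1}^\alpha\abs{\id_2}^\beta$ (Proposition~\ref{P:Ito} on $(\Cx\setminus\{0\})^2$, Proposition~\ref{P:integral}, Theorem~\ref{T:composition0}, uniqueness for \eqref{eq:190701.9}) and redo the jump-to-zero splitting from the proof of Proposition~\ref{P:190701}. Both work; your version is self-contained and actually more explicit than the paper's terse ``easily carries over'' for non-integer positive exponents, while the paper's is shorter because \eqref{eq:190228.1} already contains the splitting you redo.

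Part~\ref{P:200604.2}, however, has a genuine gap: both claims you use to dispose of the stopping time $\tau$ are false, and they fail precisely in the situation the stopped processes in \eqref{eq:200604.2} exist for. Write $Y=\bigs(\Log\bigs(X^{(1)}\bigs),\Log\bigs(X^{(2)}\bigs)\bigs)$, $\xi=\abs{1+\id_1}^\alpha\abs{1+\id_2}^\beta-1$, and $W=\xi\circ Y$. First, $\Log\bigs(X^{(k)}\bigs)$ need \emph{not} equal $\Log\bigs(X^{(k)}\bigs)^\tau$: $\tau$ is the first time \emph{either} coordinate hits zero, so if $X^{(1)}$ is absorbed at time $\tau$ while $X^{(2)}$ never hits zero, then $\Log\bigs(X^{(2)}\bigs)$ keeps evolving after $\tau$. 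Second, $W_0=0$ does not give $\Log\Exp(W)=W$; by the paper's definition of $\Log$ one has $\Log\Exp(W)=\indicator{\{\Exp(W)_-\neq0\}}\cdot W=W^{\sigma}$ with $\sigma=\inf\{t\geq0:\Delta W_t=-1\}$, since $\Exp(W)$ is absorbed at zero at $\sigma$. Here $\Delta W=-1$ exactly when a coordinate with positive exponent jumps to zero, i.e.\ $\sigma=\tau$, so the discrepancy is real. Concretely, take $\alpha=\beta=1$, $X^{(1)}_t=\indicator{t<1}$, $X^{(2)}_t=\e^{t}$: the left-hand side of \eqref{eq:200604.2} freezes at $0$ from time $1$ onwards, whereas your unstopped $W=\Log\bigs(X^{(1)}\bigs)+\Log\bigs(X^{(2)}\bigs)$ (the bracket and jump corrections vanish here) equals $t-1$ for $t\geq1$; only the stopped representation is correct.

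The repair is short and lands exactly on the stated formula: apply \eqref{eq:200604.1} to $Y$ as you do, take stochastic logarithms to obtain $W^{\tau}$ by the identity above, and then use that every term of the \'Emery formula \eqref{eq:190610.2} commutes with stopping (together with $\xi(0)=0$), so that $W^{\tau}=(\xi\circ Y)^{\tau}=\xi\circ\bigs(\Log\bigs(X^{(1)}\bigs)^\tau,\Log\bigs(X^{(2)}\bigs)^\tau\bigs)$, which is \eqref{eq:200604.2}. Your remaining steps — the translation of the hypotheses on $X^{(k)}$ into the part~\ref{P:200604.1} hypotheses for $Y$, the normalization $\Exp\bigs(\Log\bigs(X^{(k)}\bigs)\bigs)=X^{(k)}/X^{(k)}_0$, and the scale invariance of $\Log$ — are correct, and this skeleton (apply part~\ref{P:200604.1} to the stochastic logarithms, then take $\Log$) is the same as the paper's; the stopping issue is the one delicate point, and it is also the point on which the paper's own two-line proof stays silent.
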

\begin{proof}
First, for $\alpha\in\Cx\setminus (0,\infty)$ and $k\in\{1,2\}$ we have thanks to \eqref{eq:190228.1}
\begin{equation}\label{eq:210329.1}
\mathopen{\bigs|}\Exp \bigs(X^{(k)}\bigs)\mathclose{\bigs|} 
= \Exp \left( \left(\abs{1+\id} -1\right)\circ X^{(k)} \right) > 0.
\end{equation}
Formula \eqref{eq:200604.1} now follows via an application of Proposition~\ref{P:190701b}\ref{P:190701b.1} in conjunction with Theorem~\ref{T:composition0}. In the case $\alpha\in (0,\infty)$, the function $\abs{\id}^\alpha$ is well defined on $\Cx$ rather than just $\Cx\setminus\{0\}$, hence condition \eqref{eq:210329.1} is not needed. For $\alpha\in\N$ we may appeal again to Proposition~\ref{P:190701b}\ref{P:190701b.1} to obtain \eqref{eq:200604.1} and the result easily carries over to all $\alpha \in (0,\infty)$. Item \ref{P:200604.1} is proved.

Representation \eqref{eq:200604.2} is now obtained by writing $X^{(k)}=\Exp\bigs(\Log\bigs(X^{(k)}\bigs)\bigs)$ for each $k\in\{1,2\}$, applying formula \eqref{eq:200604.1} to the stochastic exponentials, and finally taking stochastic logarithms on both sides of \eqref{eq:200604.1}. 
\end{proof}

\begin{example}[Iterated composition] \label{Ex:170812}
Let us now consider  the following
construction for a $\Cx$--valued semimartingale $X$ and for a constant $\alpha \in \Cx$. Define inductively the processes $Y^0=X$;
\begin{align*}
Y^k &=\Log\bigs( \exp\bigs(\alpha Y^{k-1}\bigs)\bigs), \qquad k \in \N.
\end{align*}
Then an induction argument, \eqref{eq:190701.1}, and Theorem~\ref{T:composition0} yield that $Y^k = \xi^k \circ X$ for all $k \in \N \cup \{0\}$, with $\xi^0 = \id$;
\begin{align*}
\xi^k &= \exp \bigs( \alpha \xi ^{k-1}\bigs) -1, \qquad k \in \N.
\end{align*}
Explicitly, for each $k \in \N$, $\xi ^k$ is a nested function of the form 
\begin{equation*}
\xi^k=\underbrace{\exp \left( \alpha \left( \ldots \left( \exp
\left( \alpha \left( \exp \left( \alpha\, \id\right) -1\right) \right) -1\right)
\ldots \right) \right) -1}_{k \text{ times}}.
\end{equation*}

Using the chain rule, one infers that $\xi^k$ is analytic at zero for each $k \in \N$  with
\begin{align*}
D \xi^k (0) &= \alpha D \xi^{k-1}(0);\\
D^2 \xi^k (0) &= \alpha D^2\xi^{k-1}(0) +\alpha^{2} \bigs(D \xi^{k-1}(0)\bigs)^{2},
\end{align*}
which implies
\begin{align*}
D\xi ^k(0) &=\alpha ^{k}; \qquad
D^2\xi ^k(0) =\alpha D^2\xi^{k-1}(0) + \alpha ^{2k} =\alpha ^{k+1}\frac{\alpha ^{k}-1}{\alpha -1},\qquad\qquad k \in \N,
\end{align*}
where for $\alpha=1$ we interpret $\sfrac{(\alpha ^{k}-1)}{(\alpha -1)}$ as $k$. We conclude that, for each $k \in \N$,
\begin{equation}
Y^k=\alpha ^{k}(X - X_0) +\frac{1}{2}\alpha ^{k+1}\frac{\alpha ^{k}-1}{\alpha-1} [X,X]^c 
+\bigs( \xi ^{k} - \alpha ^{k}\, \id\bigs) * \mu^X.
\label{eq: Y(n)repre}
\end{equation}

Note that this representation of $Y^k$ is the same for \emph{any} starting process $X$, 
for each $k \in \N$.  For example, let $X_t = \mu t + \sigma W_t$ for all $t \geq 0$, where 
$W$ is Brownian motion with $W_0 = 0$. Here $\mu \in\R$ denotes the drift rate and  
$\sigma \in \R$ the volatility. Then \eqref{eq: Y(n)repre} yields
\begin{align*}
Y^k_t =\alpha ^{k} \sigma W_t + \left(\alpha^k \mu  + \frac{1}{2}\alpha ^{k+1}\frac{\alpha ^{k}-1}{\alpha
-1} \sigma^2 \right)t,  \qquad t \geq 0,
\end{align*}
for all $k \in \N$. 
Classical calculus would yield the same result, of course. For each $k \in \N$, one would repeatedly compute
\begin{equation*}
Y^k =\exp\Big(-\alpha Y^{k-1}_-\Big)\cdot \exp\Big(\alpha Y^{k-1}\Big).
\end{equation*}
This is not too complicated but can easily become quite cumbersome, even in the case of drifted Brownian motion. \qed
\end{example}

\begin{example}[It\^o--Wentzell formula] \label{Ex:IWF}
The semimartingale representation proposed in this paper naturally leads to a parsimonious generalization of the  It\^o--Wentzell formula; 
see Jeanblanc, Yor, and Chesney~\cite[Theorem~1.5.3.2]{jeanblanc.al.09} and also Bank and Baum~\cite[Proposition~1.3]{bank.baum.04}. 
To this end, consider an $\R^n$--valued semimartingale $V$ and a predictable function $\psi$ such that $\psi({x},\cdot)\in \I(V)$ 
for each ${x}\in\R^d$. Define next a family of semimartingales $(F({x}))_{ x \in \R^d}$ by setting
$$ F({x}) = \psi\left({x},\id \right)\circ V.$$

One can now randomize the family $F$ by allowing ${x}$ to switch values stochastically in line with the $\R^d$--valued semimartingale $X$. Assuming $F$ is sufficiently smooth, the randomized process $F(X)$ defined by 
$$ F(X)_t = F_t({x})|_{x = X_t}, \qquad t \geq 0$$
will again be a semimartingale. The observation 
$$\Delta F(X) = (F(X) - F_-(X)) +  (F_-(X) -F_-(X_-))$$ 
then yields, under suitable technical conditions, that 
\begin{equation}\label{eq:181211.1} 
F(X)=\xi \circ \left( X,V\right)
\end{equation}
with 
$$\xi (x,v)=\psi\left(X_{-}+x,v\right) + \left( \psi (\theta+x,\id \right) -\psi \left(\theta,\id)\right)\circ V_{-}|_{\theta=X_-}, 
\qquad x \in \R^d,\, v \in \R^n.$$

We leave the technical details to future work. For the moment, we only note that for  $\R$--valued continuous processes $X$ and $V$ and for $\psi({x},{v})= f({x}) v$, where $x,v \in\R$ and $f: \R \rightarrow \R$ is twice continuously differentiable, one formally obtains
\begin{alignat*}{3}
D_{1}\xi (0, 0) &=  f'(X)\, \id \circ V = f'(X)\cdot V;      \qquad & D_{2}\xi (0,0) &=f(X);      \qquad & &\\
D^2_{1,1}\xi (0, 0) &= f''(X)\,  \id \circ V =f''(X)\cdot V; \qquad & D^2_{1,2}\xi(0,0) &=f'(X);  \qquad & D^2_{2,2} \xi(0, 0)&= 0.
\end{alignat*}
If one can now show that $f'(X)\cdot V=F'(X)$ and $f''(X)\cdot V=F''(X)$, then 
 \eqref{eq:181211.1} yields the statement of Jeanblanc et al.~\cite[Theorem~1.5.3.2]{jeanblanc.al.09}. 
\qed
\end{example}

As the examples illustrate, the stochastic calculus introduced above is powerful and simple. Stochastic integration, It\^o's formula, and the composition rule of Theorem~\ref{T:composition} allow for a wide range of applications. Within the confines of their assumptions they show that it is enough to study jump transformations; i.e., to represent $Y$ in terms of $X$ it suffices to trace how the jump $\Delta X_t$ is transformed into the jump $\Delta Y_t$ at time $t \geq 0$.

\subsection{Counterexamples}
This subsection illustrates the tightness of the results in Section~\ref{S:3} by providing several counterexamples. 
\begin{example}[$\xi \in \I(X)$, but $\xi'(0) \notin L(X)$] \label{E:190916}
Here, we construct a process $X \in \V^{\d}_\sigma$ and a predictable function $\xi \in \I(X)$, twice continuously differentiable at zero, such that $\xi'(0) \notin L(X)$.  This illustrates the role of the predictable set $\mathcal H_X$ in Definition~\ref{D:170714}.

	Let $U \in \V^{\d}_\sigma$  denote a piecewise constant martingale that jumps at times $2 - \sfrac{1}{n}$ by $\pm 1/n^2$. Let $(\Theta_n)_{n \in \N}$ denote an independent sequence of independent $\{0,1\}$--valued random variables with $\P[\Theta_n=1] = \sfrac{1}{n^4}$. Let $(\Psi_n)_{n \in \N}$ denote a sequence, independent of $U$ and $(\Theta_n)_{n \in \N}$, of independent standard normally distributed random variables. Let now $V \in \V^{\d}_\sigma$  denote a piecewise constant martingale that jumps at times $2 - \sfrac{1}{n}$ by $\Psi_n$ if $\Theta_n = 1$ and does not jump if  $\Theta_n = 0$.

Next, set $X = U + V \in \V^{\d}_\sigma$ and assume the filtration is the natural filtration of $X$. An application of Borel--Cantelli then yields that $V$ only has finitely many jumps, hence $\Delta X = \Delta U$ except finitely many times. Consider next the deterministic predictable function $\xi$ given by 
\[
	\xi_t = \indicator{t<2}\left(\id \indicator{\abs{\id} \geq (2 -t)^2}  +  \frac{1}{(2-t)^2} \id \indicator{\abs{\id} < (2 -t)^2}  \right), \qquad  t \geq 0.
\]
Note that 
$$\xi'_t(0)= \indicator{t<2}  (2-t)^{-2},\qquad t \geq 0;$$ 
hence $\xi'_{2 - 1/n}(0) = n^2$ for all $n \in \N$ and $|\xi'(0) \id|^2 * \mu^X_2 = \infty$. 
Then thanks to Propositions~\ref{P:190330}, \ref{P:190411}, \ref{P:190911}, and \ref{P:1_J}\ref{P:1_J:ii}, $\xi'(0) \notin L(X)$ but $\xi \in L_\sigma(\mu^X) =  \I(X)$.

Moreover, if $Y = \xi \circ X$ also satisfies $Y = \eta \circ X$ for some $\eta \in \I(X)$ and $\eta'(0)$ exists, then $\eta_{2-1/n} = \xi_{2-1/n}$ for all $n \in \N$, hence  also $\eta'(0) \notin L(X)$.
\qed
\end{example}

\begin{example}[$\xi \in \I(X)$ and $\psi \in \I(\xi \circ X)$, but $\psi(\xi) \notin \I(X)$; also: $\xi \in \I(X)$, $\zeta \in L(\xi \circ X)$, but $\zeta \xi \notin \I(X)$]\label{Ex:1}
	Consider a continuous semimartingale $X$ given by 
	\begin{align*}
		X_t = W_ t- \int_0^t s^{-\sfrac{2}{3}} \d s = W_t - 3 t^{\frac{1}{3}}, \qquad  t\geq 0,
	\end{align*}
	where $W$ denotes a standard Brownian motion. Define the predictable, indeed, deterministic functions
	\begin{alignat*}{3}
		\xi_t &= \id +\id^2\, t^{-\sfrac{2}{3}}  \indicator{t>0};  
		&\qquad \psi_t &= \id\,  t^{-\sfrac{1}{3}}  \indicator{t>0}, &\qquad\qquad t &\geq 0.
	\end{alignat*}	
  Thus, $\xi \in \I^1(X)$; in particular, $Y = \xi \circ X$ satisfies $Y = W$. Hence, also $\psi \in \I^1(Y)$ and
$	\psi \circ Y  = \int_0^\cdot s^{-\sfrac{1}{3}} \d W_s$.

Now let $\eta = \psi(\xi)$. Thanks to \eqref{eq:200521.2} we have $\eta'_t (0) = t^{-\sfrac{1}{3}} $ for all $t > 0$;
hence, $\eta \notin \I(X)$ despite $\xi \in  \I(X)$ and $\psi \in  \I(\xi \circ X)$.
In this example, $\psi'(0)$ is deterministic, but \eqref{eq:200521.1}   does not hold. Hence, there is no contradiction to Theorem~\ref{T:composition}. \qed
\end{example}

\begin{example}[Alternative construction: $\xi \in \I(X)$ and $\psi \in \I(\xi \circ X)$, but $\psi(\xi) \notin \I(X)$; also: $\xi \in \I(X)$, $\zeta \in L(\xi \circ X)$, but $\zeta \xi \notin \I(X)$]\label{Ex:1''}
Let $(\tau_k)_{k \in \N}$ be a sequence of independent random variables with $\tau_k$ uniformly distributed on $(\sfrac{1}{(k+ 1)}, \sfrac{1}{k})$. Let $(J_k)_{k \in \N}$ be an independent sequence of independent $\{2,4\}$--valued random variables with $\P[J_k = 2] = \sfrac{1}{2} = \P[J_k = 4]$.   Set now
\[
	X_t = t^3 + \sum_{k = 1}^{\infty} \tau_k^{J_k} \indicator{\lc \tau_k, \infty\lc}(t), \qquad t \geq 0,
\]
and assume that the filtration be the right-continuous modification of the one generated by the finite-variation process $X$.

Consider now deterministic $\xi$ and $\psi$ given by $\xi_t = \indicator{\id \leq t^4} \id$ and $\psi_t = \sfrac{\id}{t^2} \indicator{t>0} $ for all $t \geq 0$.  Then $\xi \in \I(X)$ with 
\[
	Y_t = \xi \circ X_t = t^3 + \sum_{k = 1}^{\infty} \tau_k^{4} \indicator{\{J_k = 4\}} \indicator{\lc \tau_k, \infty\lc}(t), \qquad t \geq 0,
\]
and $\psi \in \I(Y)$ with 
\[
	\psi \circ Y_t = 3 t + \sum_{k = 1}^{\infty} \tau_k^{2} \indicator{\{J_k = 4\}} \indicator{\lc \tau_k, \infty\lc}(t), \qquad t \geq 0.
\]
However, it is clear that $\psi(\xi) \notin \I(X)$.  An even stronger statement holds, namely that there exists no $\eta \in \I(X)$ such that $\psi \circ Y  = \eta \circ X$. 
 \qed
\end{example}

\begin{example}[$\xi^{-1} \notin \I(\xi \circ X)$] \label{Ex:1a}
	Assume that $X = W$ is standard Brownian motion.
	Let $\xi$ denote some deterministic predictable function that satisfies $\xi_t(x) = tx + \sfrac{x^2}{2}$ for all $t>0$ and $x$ in a neighbourhood of zero (which may depend on $t$) and allows for an inverse. Then $\xi'_t(0) = t$  and $\xi''_t(0) = 1$ for all $t > 0$. Hence $\xi \in \I^1(X)$ and we can define $Y = \xi \circ X$, satisfying 
	\begin{align*}
		Y_t = \int_0^t s \d W_s + t, \qquad t > 0.
	\end{align*}
 Moreover, with $\psi = \xi^{-1}$ we have  $\psi(\xi) \in \I^1(X)$.  
	Observe, however, that $\psi'(0) \notin L(Y)$ and  $\psi''(0) \notin L([Y,Y]^c)$ since 
$\psi'_t(0) = \sfrac{1}{t}$ and  
$$\psi''_t(0) = \frac{- \xi''_t(0)}{( \xi'_t(0))^3} = - t^{-3}, \qquad t \geq 0.$$
Thus $\psi \notin \I(Y)$ but there is no contradiction to Remark~\ref{R:200521.2} as \eqref{eq:200521.1} is not met. Because $\psi'(0)=\sfrac{1}{\xi'(0)}$ is not locally bounded, this example does not contradict Corollary~\ref{C:170726} either.  Note that there exists no $\eta \in \I(Y)$ such that $X = \eta \circ Y$.
\qed
\end{example}

\begin{example}[Alternative construction: $\xi^{-1} \notin \I(\xi \circ X)$; additionally $\xi \circ X = X$]\label{Ex:1a''}
Let $(\tau_k)_{k \in \N}$ be a sequence of independent random variables with $\tau_k$ uniformly distributed on $(\sfrac{1}{(k+ 1)}, \sfrac{1}{k})$. 
Let $(U_k)_{k \in \N}$ be an independent sequence of independent and identically distributed $\{-1,1\}$--valued random variables with $\P[U_1 = 1] = \sfrac{1}{2}$.
 Set now
\[
	X =  \sum_{k = 1}^{\infty} U_k \tau_k \indicator{\lc \tau_k, \infty\lc} \in \V^{\d}_\sigma,
\]
and assume that the filtration be the right-continuous modification of the one generated by  $X$.

Let $\xi$ denote some deterministic predictable function that allows for an inverse and satisfies, for all  $t>0$,   $\xi_t(x) = x$ for all $x$ with $\abs{x} \geq \sfrac{1}{(t+1)}$ and $\xi_t(x) = tx$ for all  $x$ in a neighbourhood of zero (which may depend on $t$). Then $\xi \in \I(X)$ and $X = \xi \circ X$.  However,
since $\indicator{\H_X}=1$ and $D \xi^{-1}(0) \notin L(X)$, we have  $\xi^{-1} \notin \I(X)$, concluding the example. \qed
\end{example}

\begin{example}[$\xi, \psi(\xi) \in\I(X)$ and $\psi \in \I(\xi \circ  X)$ but $\psi(\xi) \circ X \neq \psi \circ (\xi \circ X)$] \label{Ex:3}
Let $X$ be a continuous semimartingale not equal to the zero process.  Consider $\xi = \id^3 \in \Uni$ and $\psi = \xi^{-1}$. Note that $\psi(\xi) =  \id \in \Uni$ and that $\xi \circ X =0$, hence $\psi \in \I(\xi \circ X)$. 
However, we have
\[
	\psi (\xi) \circ X = X \neq 0 = \psi \circ (\xi \circ X).
\]
Theorem~\ref{T:composition} is not contradicted because $\psi$ is not differentiable at zero, $(\P \times A^X)$--a.e.
\qed
\end{example}

\setlength{\abovedisplayskip}{5pt}
\setlength{\belowdisplayskip}{5pt}

\section{Predictable characteristics}\label{S:5}
Up to this point we have relied on a `pathwise' perspective  in the sense that the representation of the process $Y$ by means of $\xi\circ X$ depends on the probability measure only through the null sets; see also Remark~\ref{R:181106}.  Now we will demonstrate the ability to convert an $X$--representation into predictable characteristics. In this section, we shall use generalized conditional expectation; see  Shiryaev~\cite[pp.~475--476]{shiryaev.96} and Jacod and Shiryaev~\cite[I.1.1]{js.03}.

\subsection{Truncation functions}\label{sect: technicalitiesII}
In \cite[II.2.3--4]{js.03},  a bounded function $h: \Cx^d \rightarrow \Cx^d$ is called a truncation function for $X$ if $h(x) = x$ in a neighbourhood of zero. For such $h$ the process 
$X[h] = X - \left(\id - h\right)* \mu^X$
only has bounded jumps and is therefore special. Below, it will be  useful to not only control the jumps of $X$, but also those of a stochastic integral with respect to $\widehat{X[h]}=\hat\id(X[h])$. This leads to the following generalization of the classical  truncation function where the boundedness and integrability requirements are relaxed. Moreover, $h$ is no longer restricted to be a time-constant deterministic predictable function.

\begin{definition}[Truncation function for $X$ and its compatibility with $\xi\in\I(X)$]  \label{D:190628}
We call a predictable function $h:\widebar{\Omega}^d\rightarrow \Cinf^d$ a truncation function for $X$	if 
	\begin{align*}
		\id - h \in L_\sigma(\mu^X) \qquad \text{and if }  \qquad X[h] = X - \left(\id - h\right)\star \mu^X \text{ is special.}
	\end{align*}
Moreover, for $\xi\in\I(X)$, we say that a truncation function $h$ for $X$ is $\xi$--compatible if 
$\indicator{\H_X} \hat D\xi(0)\in L(\widehat{X[h]})$ and $\indicator{\H_X} \hat D\xi(0) \cdot \widehat{X[h]}$ is special.
\qed
\end{definition}

\begin{remark}[Observations on truncation functions]\label{R:190423.2}
If $h$ is a truncation function for $X$ and $\xi\in \I(X)$, implying $\indicator{\H_X} \hat D\xi(0)\in L(\hat X)$, it does not follow that $\indicator{\H_X} \hat D\xi(0)\in L(\widehat{X[h]})$. Indeed, there exists an $\R$--valued quasi-left-continuous  process $V\in\V^{\d}_\sigma$ with $V_0 = 0$ whose jumps are bounded and a predictable process $\zeta \in L(V-B^V)$ such that $\zeta\notin L(B^V)$ (see \cite[Example~3.11]{cr0}). Let  now 
$$X=V-B^V.$$ 
Then $h=0$ is a truncation function for $X$ with $X[0]=-B^V$ and $\H_X^c$ is empty. The predictable function $\xi=\zeta\id$  satisfies $\zeta = \indicator{\H_X} {\xi}'(0)\in L(X)$ but $\indicator{\H_X}{\xi}'(0) \notin L(X[0])$.

Consider now the process $Y = \zeta \cdot X$.  We claim that $h = 0$ is not a truncation function for $Y$. Hence, this provides an example of a process $Y$ such that $Y[0]$ does \emph{not} exist. Assume it did. Then $\id \in L_\sigma(\mu^Y)$, yielding $\zeta\id \in L_\sigma(\mu^X) = L_\sigma(\mu^V)$. In view of Proposition~\ref{P:190411} and the fact that $\zeta \notin L(V)$ this yields a contradiction.

	As a final observation for the moment, note that  the process $\indicator{\H_X} \hat D \xi(0)\cdot \widehat{X[h]}$ may not be special even if it is known that $h$ is a truncation function for $X$, 
	$\xi\in{\I}(X)$, and  $\indicator{\H_X}  \hat D\xi(0)$ integrates $\widehat{X[h]}$.  For example, let  $X = \indicator{\lc U, \infty\lc}$, where $U$ is uniformly distributed, let  $\filt F$ denote the smallest right-continuous filtration that makes $X$ adapted,  let $\xi_t = \sfrac{\id}{t}$ for all $t > 0$, and let $h = \id$. Then $h$ is a truncation function for $X$ with $X = X[h]$, $\xi \in \I(X)$, and $\H_X^c$ empty, but $\indicator{\H_X}  \xi'(0) \cdot X[h] =  \sfrac{1}{U} \indicator{\lc U, \infty\lc}$ is not special.
\qed
\end{remark}

\begin{lemma}[Compatible truncation]\label{L:compatible h}
There exists a $\xi$--compatible truncation function $h$ for $X$, for every $\xi \in \I(X)$. Moreover, $h$ can be chosen 
such that $h\in\I(X)$ with 
\begin{equation}\label{eq:190702.1}
X[h] = X_0 + h\circ X.
\end{equation}
Furthermore, if $\hat D\xi(0)$ is locally bounded (in particular, if $\xi\in\Uni\cap \I(X)$), then any truncation function for $X$ is $\xi$--compatible.
\end{lemma}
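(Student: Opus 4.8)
The plan is to treat the three assertions in turn, beginning with the last one, since it both motivates and contrasts with the construction needed for the first two. Throughout, write $\zeta = \indicator{\H_X}\hat D\xi(0)$, so that Definition~\ref{D:170714}\ref{D_I(X):4} gives $\zeta\in L(\hat X)$. Suppose first that $\hat D\xi(0)$ is locally bounded (which, for $\xi\in\Uni\cap\I(X)$, holds by Definition~\ref{D:210911}\ref{I0bis:iii}); then $\zeta$ is locally bounded and predictable. For an \emph{arbitrary} truncation function $h$ for $X$, the semimartingale $X[h]$ is special by Definition~\ref{D:190628}, so its real lift $\widehat{X[h]}$ (recall Definition~\ref{D:propertiesCx}) admits a canonical decomposition $\hat M + \hat B$. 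A locally bounded predictable integrand is integrable against any semimartingale, whence $\zeta\in L(\widehat{X[h]})$, and $\zeta\cdot\widehat{X[h]} = \zeta\cdot\hat M + \zeta\cdot\hat B$ is again a local martingale plus a predictable finite-variation process, hence special. Thus every truncation function is $\xi$--compatible.

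For the existence statement I would instead \emph{construct} $h$ so that the jumps of both $X[h]$ and $\zeta\cdot\widehat{X[h]}$ are bounded, using that a semimartingale with bounded jumps is special. Concretely, put $A(x) = \{\abs{x}\le 1\}\cap\{\abs{\zeta\hat\id(x)}\le 1\}$ and define the predictable truncation function $h(x) = x\,\indicator{A(x)}$. Since the $\star$--integral satisfies $\Delta X[h] = h(\Delta X)$ and, for the stochastic integral, $\Delta(\zeta\cdot\widehat{X[h]}) = \zeta\,\hat\id(\Delta X)\,\indicator{A(\Delta X)}$, both jump processes are bounded by $1$ directly by the definition of $A$. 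Note that on $\H_X^c$ one has $\zeta = 0$, so there $A(x) = \{\abs{x}\le 1\}$ and $h$ reduces to the classical truncation.

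The substance is then the integrability bookkeeping. Writing $g = \id - h = \id\,\indicator{A^c}$ and $\hat g = \hat\id(g)$, I would verify $g\in L(\mu^X)$ and $\zeta\hat g\in L(\mu^X)$. Both rest on the elementary fact that a c\`adl\`ag path has only finitely many jumps exceeding a fixed size on compact intervals: on $A^c$ either $\abs{\Delta X}>1$ or $\abs{\Delta(\zeta\cdot\hat X)}>1$, so the sums $\sum_{s\le\cdot}\abs{\Delta X_s}\indicator{A^c(\Delta X_s)}$ and $\sum_{s\le\cdot}\abs{\zeta\,\hat\id(\Delta X_s)}\indicator{A^c(\Delta X_s)}$ are finite (a finite number of finite terms). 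Consequently $g\in L(\mu^X)$ shows $h$ is a truncation function with $X[h] = X - g*\mu^X$ of bounded jumps, while $\zeta\hat g\in L(\mu^X)$ together with $\zeta\in L(\hat X)$ and Proposition~\ref{P:190411} (applied to $\widehat{X[h]} = \hat X - \hat g*\mu^X$) yields $\zeta\in L(\widehat{X[h]})$ with $\zeta\cdot\widehat{X[h]}$ of bounded jumps, hence special; so $h$ is $\xi$--compatible.

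For the representation $X[h] = X_0 + h\circ X$ I would avoid checking Definition~\ref{D:170714} for $h$ directly and instead use linearity. As $g$ vanishes on a neighbourhood of $0$ (for each $(\omega,t)$) and $g\in L_\sigma(\mu^X)$, all six conditions hold trivially for $g$, so $g\in\I(X)$ with $g\circ X = g\star\mu^X = g*\mu^X$; combined with $\id\in\I^d(X)$ and $\id\circ X = X - X_0$ from Proposition~\ref{P:1}\ref{P:1:iii}, the linearity of $\xi\mapsto\xi\circ X$ (Proposition~\ref{P:1_J}\ref{P:1_J:i} and the \'Emery formula~\eqref{eq:190610.2}) gives $h = \id - g\in\I(X)$ and $h\circ X = (X-X_0) - g*\mu^X = X[h] - X_0$. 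The main obstacle, and the point requiring the most care, is the bookkeeping around $\H_X$: the factor $\indicator{\H_X}$ inside $\zeta$ is what makes the finiteness of $\zeta\hat g*\mu^X$ be governed by the jumps of $\zeta\cdot\hat X$ (an honest semimartingale because $\zeta\in L(\hat X)$) rather than by $\hat D\xi(0)$ itself, which need not be locally bounded (cf.\ Example~\ref{E:190916}); this is precisely why one must work with $\zeta\cdot\hat X$ and not with the raw derivative.
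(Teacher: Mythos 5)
Your proof is correct and takes essentially the same route as the paper: the same truncation $h=\id\,\indicator{\abs{\id}\le 1\text{ and }\abs{\zeta\hat\id}\le 1}$, the same argument that $X$ and $\zeta\cdot\hat X$ have only finitely many jumps exceeding $1$ on compacts (giving $\id-h\in L(\mu^X)$ and, via Proposition~\ref{P:190411}, $\zeta\in L(\widehat{X[h]})$), bounded jumps forcing specialness, and localization for the final claim. The only cosmetic difference is in deriving \eqref{eq:190702.1}: the paper notes that $h$ is analytic at $0$ with $Dh(0)=I_d$, $D^2h(0)=0$ and applies the analytic \'Emery formula \eqref{eq:190703.1}, whereas you split $h=\id-g$ and invoke linearity of $\circ$; both are routine and equivalent.
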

\begin{proof}
 By assumption, $\varsigma = \indicator{\H_X} \hat D \xi(0)$ is in $L(\hat{X})$.  We claim that 
\begin{align}\label{eq:200612.1}
	h =  \id \indicator{\abs{\id} \leq 1 \text{ and } \left\lvert \varsigma\, \hat\id \right\rvert \leq 1}
\end{align}
has the desired properties. Indeed, $\id-h\in L(\mu^X)$ because both $X$ and $\varsigma \cdot \hat{X}$ have finitely many jumps larger than one in absolute value on any compact time interval. This also yields $\varsigma\in L\big(\hat{X} - \widehat{X[h]}\big)$ and consequently 
$\varsigma \in L(\widehat{X[h]})$. The jumps of $X[h]$ and $\varsigma \cdot \widehat{X[h]}$ are bounded by 1 in absolute value; therefore both processes are also special. 

Observe that $h(\omega,t,x)=x$ on a $(\omega,t)$--dependent neighbourhood of zero, $(\P \times A^X)$--a.e. This yields that $h$ is analytic at 0, $Dh(0)$ is an identity matrix, and $D^2 h(0)=0$. The representation formula \eqref{eq:190703.1} now gives
$$X_0+h\circ X = X  + (h-\id)\star \mu^X=X[h].$$

The final claim follows by localization.
\end{proof}

\begin{remark}[Truncation at zero]
The previous lemma shows that sufficiently many truncation functions can be applied via the natural formula \eqref{eq:190702.1}. We elect \emph{not} to make \eqref{eq:190702.1}  the \emph{only} way to truncate because \eqref{eq:190702.1} does not hold for $h=0$. Truncation at zero is convenient when $X$ has jumps of finite variation; more generally, it can be applied whenever $\id\, \in L_\sigma(\mu^X)$. \qed
\end{remark}

The next proposition recognizes that the \'Emery formula~\eqref{eq:190610.2} represents a whole spectrum of equivalent expressions where the jumps of $X$ can be dialled down in the first term of~\eqref{eq:190610.2} as long as they are equivalently modified in the last term of~\eqref{eq:190610.2}. In most applications, it is possible to choose as truncation one of the polar cases $h=0$ or $h=\id$; less frequently one may have to opt for an intermediate truncation such as $h=\id \indicator{\abs{\id}\leq 1}$; in full generality it may be necessary to use the compatible truncation \eqref{eq:200612.1}.   
\begin{proposition}[\'Emery formula involving truncation]  \label{P:truncation}
Fix $\xi \in \I^n(X)$ and let $g$ be a truncation function for $\xi\circ X$. Moreover, let $h$ be a $\xi$--compatible truncation function for $X$. Then  the following terms are well defined and we have
\begin{align}
(\xi\circ X)[g] &=  \indicator{\H_X} \hat D \xi(0) \cdot \widehat{X[h]} 
		+ \frac{1}{2} \hat{D}^2 {\xi}(0) \cdot \bigs[\hat X,\hat X\bigs]^c
		+ \left(g(\xi) -  \indicator{\H_X} \hat D \xi(0) \hat{h} \right) \star \mu^{X}   \label{eq:190627.3a}\\
		&= \indicator{\H_X} \check{D}\xi(0) \cdot {\widecheck{X[h]}}
	+  \frac{1}{2} \check{D}^2 {\xi}(0) \cdot \bigs[\check X,\check X\bigs]^c
	+ \bigs(g(\xi)  - \indicator{\H_X} \check{D} \xi(0) \check{h} \bigs) \star \mu^{X}. \label{eq:190627.3b}
\end{align}
If $\indicator{\H_X}  \xi$ is analytic at $0$, $(\P \times A^X)$--a.e., the following terms are well defined and we have
\begin{align}
(\xi\circ X)[g] &=  \indicator{\H_X}  D \xi(0) \cdot {X[h]} 
		+ \frac{1}{2} D^2 {\xi}(0) \cdot [X,X]^c
		+ \left(g(\xi) -  \indicator{\H_X}  D \xi(0) h \right) \star \mu^{X}.  \label{eq:190627.3c}
\end{align}
If $g$ satisfies $g(w)=w$ on an $(\omega,t)$--dependent neighbourhood of 0, $(\P \times A^X)$--a.e., then we also have  
$g(\xi) \in \I(X)$ and
\[\pushQED{\qed}
(\xi\circ X)[g] = g(\xi) \circ X.
\qedhere\popQED \]
\end{proposition}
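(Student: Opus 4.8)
The plan is to derive \eqref{eq:190627.3a} from the \'Emery formula \eqref{eq:190610.2} for $Y=\xi\circ X$ by two rearrangements, then to read off \eqref{eq:190627.3b} and \eqref{eq:190627.3c} from it via the substitutions used in the proof of Proposition~\ref{P:190114.1}, and finally to treat the special case $g(w)=w$ near $0$ on its own. Recall $\Delta Y=\xi(\Delta X)$ from Proposition~\ref{P:1}\ref{P:1:i}. For the first rearrangement, linearity of $\hat\id$ together with $X[h]=X-(\id-h)\star\mu^X$ gives $\widehat{X[h]}=\hat X-(\hat\id-\hat h)\star\mu^X$. Since $h$ is $\xi$--compatible, $\indicator{\H_X}\hat D\xi(0)\in L(\widehat{X[h]})$, so Proposition~\ref{P:190411} lets me pull $\indicator{\H_X}\hat D\xi(0)$ through the $\star$ integral:
\[
\indicator{\H_X}\hat D\xi(0)\cdot\hat X=\indicator{\H_X}\hat D\xi(0)\cdot\widehat{X[h]}+\bigl(\indicator{\H_X}\hat D\xi(0)(\hat\id-\hat h)\bigr)\star\mu^X.
\]
Substituting this into \eqref{eq:190610.2} and absorbing the correction into the last term replaces $\hat\id$ by $\hat h$ there, giving $Y=\indicator{\H_X}\hat D\xi(0)\cdot\widehat{X[h]}+\tfrac12\hat D^2\xi(0)\cdot[\hat X,\hat X]^c+(\xi-\indicator{\H_X}\hat D\xi(0)\hat h)\star\mu^X$, with the first two terms well defined since $h$ is $\xi$--compatible and $\xi\in\I(X)$.

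It remains to pass from $Y$ to $Y[g]=Y-(\id-g)\star\mu^Y$, and here lies the step I expect to be the main obstacle: I must transfer the $\sigma$--localized jump--measure integral across the representation, showing $(\id-g)\star\mu^Y=(\xi-g(\xi))\star\mu^X$ and, simultaneously, that $\xi-g(\xi)\in L_\sigma(\mu^X)$, which is exactly the well-definedness of the last term of \eqref{eq:190627.3a}. I would argue by jump matching: both $(\id-g)\star\mu^Y$ and the candidate $(\xi-g(\xi))\star\mu^X$ belong to $\V^{\d}_\sigma$ by Proposition~\ref{P:190911}, and because $\Delta Y=\xi(\Delta X)$ they carry the common jumps $(\id-g)(\Delta Y)=(\xi-g(\xi))(\Delta X)$; since an element of $\V^{\d}_\sigma$ started at $0$ equals $\id\star\mu^V$ and is thus determined by its jumps, the two processes coincide once $\xi-g(\xi)\in L_\sigma(\mu^X)$ is in hand. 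That integrability I would extract from $\id-g\in L_\sigma(\mu^Y)$ via the characterization in Proposition~\ref{P:190330} combined with $\Delta Y=\xi(\Delta X)$. Subtracting $(\id-g)\star\mu^Y$ then turns the integrand $\xi-\indicator{\H_X}\hat D\xi(0)\hat h$ into $g(\xi)-\indicator{\H_X}\hat D\xi(0)\hat h$, which is precisely \eqref{eq:190627.3a}.

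For \eqref{eq:190627.3b} I would reuse the substitutions from the proof of Proposition~\ref{P:190114.1}: with $\Sigma$ as defined there one has $\check D\xi(0)=\hat D\xi(0)\Sigma$, $\check\id=\Sigma^{-1}\hat\id$ (hence $\widecheck{X[h]}=\Sigma^{-1}\widehat{X[h]}$ and $\check h=\Sigma^{-1}\hat h$), $\check D^2\xi(0)=\Sigma^\top\hat D^2\xi(0)\Sigma$, and $[\check X,\check X]=\Sigma^{-1}[\hat X,\hat X](\Sigma^{-1})^\top$. Lemma~\ref{L:200530} then yields $\indicator{\H_X}\check D\xi(0)\cdot\widecheck{X[h]}=\indicator{\H_X}\hat D\xi(0)\cdot\widehat{X[h]}$ and $\tfrac12\check D^2\xi(0)\cdot[\check X,\check X]^c=\tfrac12\hat D^2\xi(0)\cdot[\hat X,\hat X]^c$, while $\check D\xi(0)\check h=\hat D\xi(0)\hat h$; substituting these into \eqref{eq:190627.3a} gives \eqref{eq:190627.3b} term by term. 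Formula \eqref{eq:190627.3c} then follows in the analytic case exactly as in Proposition~\ref{P:190114.1}, because there the antiholomorphic Wirtinger derivatives vanish at $0$, so that $\check D\xi(0)$, $\check D^2\xi(0)$, $\check h$ collapse to $D\xi(0)$, $D^2\xi(0)$, $h$.

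For the final assertion, assume $g(w)=w$ on a neighbourhood of $0$, $(\P\times A^X)$--a.e. Then $g(0)=0$, and since $g$ agrees with the identity near $0$ the chain-rule identities \eqref{eq:200521.2} and \eqref{eq:200521.3}, applied component-wise, give $\hat D(g(\xi))(0)=\hat D\xi(0)$ and $\hat D^2(g(\xi))(0)=\hat D^2\xi(0)$. I would then verify the six conditions of Definition~\ref{D:170714} for $g(\xi)$: conditions \ref{D_I(X):1}--\ref{D_I(X):3} are immediate, \ref{D_I(X):4}--\ref{D_I(X):5} hold because the derivatives of $g(\xi)$ at $0$ coincide with those of $\xi\in\I(X)$, and \ref{D_I(X):6} follows from $g(\xi)-\indicator{\H_X}\hat D\xi(0)\hat\id=(\xi-\indicator{\H_X}\hat D\xi(0)\hat\id)+(g(\xi)-\xi)$, where the first summand is in $L_\sigma(\mu^X)$ since $\xi\in\I(X)$ and $g(\xi)-\xi=-(\id-g)(\xi)\in L_\sigma(\mu^X)$ by the transfer established in the second paragraph. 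Hence $g(\xi)\in\I(X)$. Finally I would write \eqref{eq:190610.2} for $g(\xi)\circ X$, substitute $\hat D(g(\xi))(0)=\hat D\xi(0)$ and $\hat D^2(g(\xi))(0)=\hat D^2\xi(0)$, and compare with \eqref{eq:190627.3a}; their difference equals $\indicator{\H_X}\hat D\xi(0)\cdot(\widehat{X[h]}-\hat X)+\bigl(\indicator{\H_X}\hat D\xi(0)(\hat\id-\hat h)\bigr)\star\mu^X$, which vanishes by the very rearrangement of the first paragraph. This yields $(\xi\circ X)[g]=g(\xi)\circ X$.
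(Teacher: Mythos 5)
Your overall architecture matches the paper's: you swap $\hat X$ for $\widehat{X[h]}$ in the \'Emery formula \eqref{eq:190610.2} via $\xi$--compatibility and Proposition~\ref{P:190411}, you obtain \eqref{eq:190627.3b} and \eqref{eq:190627.3c} exactly as in Proposition~\ref{P:190114.1}, and your treatment of the final claim (chain rule giving $\hat D(g(\xi))(0)=\hat D\xi(0)$ and $\hat D^2(g(\xi))(0)=\hat D^2\xi(0)$, then direct verification of Definition~\ref{D:170714} for $g(\xi)$ and comparison of \'Emery formulas) is an equally valid variant of the paper's route, which instead notes $\hat D g(0)=I_{2n}$, $\hat D^2 g(0)=0$, concludes $g\in\I(\xi\circ X)$, and invokes Corollary~\ref{C:200522}.

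The problem lies in the step you yourself call the main obstacle: deducing $\xi-g(\xi)\in L_\sigma(\mu^X)$ from $\id-g\in L_\sigma(\mu^{\xi\circ X})$ ``via the characterization in Proposition~\ref{P:190330} combined with $\Delta Y=\xi(\Delta X)$.'' Proposition~\ref{P:190330} reduces membership in $L_\sigma(\mu^X)$ to condition (a), a $*$--integral condition that does transfer by jump matching, and condition (b), membership of the truncated function in $L_\sigma(\nu^X)$. What the hypothesis on $g$ gives you is the corresponding statement in $L_\sigma(\nu^{Y})$ with $Y = \xi \circ X$, and compensators --- unlike jump measures --- are not pathwise objects determined by the jumps, so the identity $\Delta Y=\xi(\Delta X)$ alone cannot convert an $L_\sigma(\nu^{Y})$ statement into an $L_\sigma(\nu^X)$ statement. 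The missing bridge is precisely the fact that $\nu^{Y}$ is the image of $\nu^X$ under $\xi$, i.e.\ \eqref{eq:170813.3}; but in this paper that fact is derived from Proposition~\ref{P:170822.1}, which itself rests on the proposition you are proving, so invoking it here is circular (it admits an independent standard proof from the defining property of the compensator, but you do not give one). The gap closes immediately if you argue from Definition~\ref{D:190405} instead of Proposition~\ref{P:190330}: take a sigma-localizing sequence $(C_k)_{k\in\N}$ and a semimartingale $W$ with $\indicator{C_k}(\id-g)\in L(\mu^{Y})$ and $(\indicator{C_k}(\id-g))*\mu^{Y}=\indicator{C_k}\cdot W$; since the sets $C_k$ do not involve the space variable and the $*$--integrals are absolutely convergent sums over jumps, jump matching yields $\indicator{C_k}(\xi-g(\xi))\in L(\mu^X)$ with $(\indicator{C_k}(\xi-g(\xi)))*\mu^X=\indicator{C_k}\cdot W$, hence $\xi-g(\xi)\in L_\sigma(\mu^X)$ and $(\xi-g(\xi))\star\mu^X=(\id-g)\star\mu^{Y}$ in one stroke --- which also makes your separate $\V^{\d}_\sigma$ jump-matching uniqueness argument via Proposition~\ref{P:190911} unnecessary.
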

\begin{proof}
Thanks to $g$ being a truncation function for $\xi\circ X$ and  $h$ being $\xi$--compatible (in conjunction with Proposition~\ref{P:190411}) we have $(g(\xi) -  \indicator{\H_X} \hat D \xi(0) \hat{h}  \in L_\sigma(\mu^X)$. It is now simple to establish \eqref{eq:190627.3a}. Next, \eqref{eq:190627.3b} and \eqref{eq:190627.3c} follow as in Proposition~\ref{P:190114.1}.

The additional hypothesis on $g$ yields $\hat D\hat g (0) = I_{2n}$  and $\hat D^2  g(0)=0$, hence $g\in\I(\xi\circ X)$. The last statement follows from the definition of $\I(X)$, the $\circ$--notation, and from Corollary~\ref{C:200522}.
\end{proof}

\subsection{Characteristics under the measure \texorpdfstring{$\P$}{P}}
Proposition~\ref{P:truncation} yields the next observation, which is the key step towards computing predictable characteristics of represented semimartingales.
\begin{proposition}[Drift of a truncated represented semimartingale]\label{P:170822.1}
Fix $\xi\in{\I}(X)$ and let $g$ be a truncation function for $\xi\circ X$. Moreover, let $h$ be a $\xi$--compatible truncation function for $X$. Then the following terms are well defined and
the predictable compensator of $(\xi \circ X)[g]$ under $\P$ is given by 
\begin{align}
B^{(\xi\circ X)[g]} &= \indicator{\H_X} \hat D \xi(0) \cdot B^{\widehat{X[h]}}
		+ \frac{1}{2} \hat{D}^2 {\xi}(0) \cdot \bigs[\hat X,\hat X\bigs]^c
		+ \left(g(\xi) -  \indicator{\H_X} \hat D \xi (0) \hat{h} \right) \star \nu^{X}   \label{eq:190317.1}\\
		&= \indicator{\H_X} \check{D}\xi(0) \cdot B^{\widecheck{X[h]}}
	+  \frac{1}{2} \check{D}^2 {\xi}(0) \cdot \bigs[\check X,\check X\bigs]^c
	+ \bigs(g(\xi)  - \indicator{\H_X} \check{D} \xi(0) \check{h} \bigs) \star \nu^{X}. \label{eq:190317.2}
\end{align}
If $\indicator{\H_X} \xi$ is analytic at $0$, $(\P \times A^X)$--a.e., the following terms are well defined and we have
\begin{align}
B^{(\xi\circ X)[g]} &=  \indicator{\H_X}  D \xi(0) \cdot B^{X[h]} 
		+ \frac{1}{2} D^2 {\xi}(0) \cdot [X,X]^c
		+ \left(g(\xi) -  \indicator{\H_X}  D \xi (0) h \right) \star \nu^{X}.  \label{eq:190704.4}
\end{align}
\end{proposition}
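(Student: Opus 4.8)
The plan is to derive the drift formula directly from the pathwise \'Emery representation of the truncated process established in Proposition~\ref{P:truncation}, taking predictable compensators term by term. First I would start from the pathwise identity~\eqref{eq:190627.3a} for $(\xi\circ X)[g]$, which already has exactly three summands: a stochastic integral $\indicator{\H_X}\hat D\xi(0)\cdot\widehat{X[h]}$, a continuous quadratic-variation term $\tfrac12\hat D^2\xi(0)\cdot[\hat X,\hat X]^c$, and a $\star$--jump-measure integral $(g(\xi)-\indicator{\H_X}\hat D\xi(0)\hat h)\star\mu^X$. Since $(\xi\circ X)[g]$ is special by the definition of a truncation function for $\xi\circ X$, its drift $B^{(\xi\circ X)[g]}$ is the predictable finite-variation part of its Doob--Meyer decomposition, and the strategy is to identify the compensator of each of the three terms separately and add them.

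The key observation is how each term compensates. The continuous-variation term is itself predictable and of finite variation, hence contributes unchanged to the drift. For the stochastic-integral term, $\xi$--compatibility of $h$ guarantees $\indicator{\H_X}\hat D\xi(0)\cdot\widehat{X[h]}$ is special, so its drift is $\indicator{\H_X}\hat D\xi(0)\cdot B^{\widehat{X[h]}}$ by linearity of the integral with respect to the canonical decomposition (this is precisely where one uses that the compensator commutes with the locally bounded predictable integrand, appealing to the associativity of the integral as in the proof of Proposition~\ref{P:integral}). For the $\star$--jump-measure term, the drift is obtained by replacing $\mu^X$ with its compensator $\nu^X$: the martingale part $(g(\xi)-\indicator{\H_X}\hat D\xi(0)\hat h)\star(\mu^X-\nu^X)$ vanishes in the drift, leaving $(g(\xi)-\indicator{\H_X}\hat D\xi(0)\hat h)\star\nu^X$. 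This last replacement is exactly the content of the sigma-localized decomposition~\eqref{eq:190405.1} together with the defining martingale property of $\nu^X$ relative to $\mu^X$. Assembling these three pieces yields~\eqref{eq:190317.1}.

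The remaining two displays follow mechanically from the first. Formula~\eqref{eq:190317.2} comes from applying the same linear-algebra substitution used in Proposition~\ref{P:190114.1} (via $\check D\xi=\hat D\xi\,\Sigma$, $\check X=\Sigma^{-1}\hat X$, and Lemma~\ref{L:200530}) to the already-compensated identity, and~\eqref{eq:190704.4} then specializes to the analytic case via the standard Wirtinger-derivative simplification, exactly as in the pathwise counterpart~\eqref{eq:190627.3c}.

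The main obstacle I anticipate is justifying that one may take compensators term by term, i.e.\ that each of the three summands is individually special (or at least that the drift operator distributes over the sum in this sigma-localized setting). Naively, a sum can be special without the summands being special. Here the safeguard is built into the hypotheses: $\xi$--compatibility forces the integral term $\indicator{\H_X}\hat D\xi(0)\cdot\widehat{X[h]}$ to be special, the continuous term is automatically of finite variation, and specialness of the overall truncated process then pins down the $\star\nu^X$ term. I would therefore spend the bulk of the rigorous argument verifying that the $\star$--integral $(g(\xi)-\indicator{\H_X}\hat D\xi(0)\hat h)\star\mu^X$ lies in the sigma-localized class so that~\eqref{eq:190405.1} applies and its compensator $(g(\xi)-\indicator{\H_X}\hat D\xi(0)\hat h)\star\nu^X$ is well defined; this is where Remark~\ref{R:190423.2}'s cautionary examples show that the decomposition is genuinely delicate and relies essentially on the compatibility assumption on $h$ and on $g$ being a truncation function for $\xi\circ X$.
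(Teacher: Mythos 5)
Your architecture matches the paper's own proof: start from the pathwise identity \eqref{eq:190627.3a}, observe that the $\star\mu^X$ term is special because $(\xi\circ X)[g]$, the stochastic-integral term (via $\xi$--compatibility of $h$), and the predictable finite-variation term $\tfrac12\hat D^2\xi(0)\cdot[\hat X,\hat X]^c$ all are, then compensate term by term and obtain \eqref{eq:190317.2} and \eqref{eq:190704.4} exactly as in Proposition~\ref{P:190114.1}. Your treatment of the jump-measure term via \eqref{eq:190405.1} is also essentially in order.

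There is, however, a genuine gap in your treatment of the stochastic-integral term. You justify $B^{\indicator{\H_X}\hat D\xi(0)\cdot\widehat{X[h]}}=\indicator{\H_X}\hat D\xi(0)\cdot B^{\widehat{X[h]}}$ by saying that ``the compensator commutes with the locally bounded predictable integrand,'' but $\indicator{\H_X}\hat D\xi(0)$ is not locally bounded in general: Definition~\ref{D:170714}\ref{D_I(X):4} only requires $\indicator{\H_X}\hat D\xi(0)\in L(\hat X)$, and $\xi$--compatibility only adds membership in $L(\widehat{X[h]})$ together with specialness of the integral (see Example~\ref{E:190916} for how badly $\hat D\xi(0)$ can behave). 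For an unbounded integrand $\zeta$, the statement that specialness of $V$ and of $\zeta\cdot V$ forces $\zeta$ to be integrable with respect to \emph{each} piece of the canonical decomposition of $V$, with the canonical decomposition of $\zeta\cdot V$ obtained by integrating piece by piece, is a nontrivial theorem about vector stochastic integration --- it does not follow from linearity or from the associativity used in Proposition~\ref{P:integral}. Note in particular that the integrability of $\indicator{\H_X}\hat D\xi(0)$ with respect to $B^{\widehat{X[h]}}$ is part of the proposition's assertion (``the following terms are well defined''), not a hypothesis, so it must be produced by the argument rather than assumed. The paper closes precisely this step by invoking Shiryaev and Cherny~\cite[Lemma~4.2]{shiryaev.cherny.02}; your proof needs that lemma (or an equivalent argument) at this point.
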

\begin{proof}
In \eqref{eq:190627.3a}, the last term is special since all the other terms are special. Hence \eqref{eq:190317.1} follows 
from Shiryaev and Cherny~\cite[Lemma~4.2]{shiryaev.cherny.02}.
Equations~\eqref{eq:190317.2} and \eqref{eq:190704.4} follow as in Proposition~\ref{P:190114.1}.
\end{proof}

\begin{remark}[Discrete-time and continuous-time components of a drift]
Recall the unique decomposition in Proposition~\ref{P:190729}. Consider now a predictable function $\xi\in\I(X)$. Proposition~\ref{P:1_J}\ref{P:1_J:iii} asserts $(\xi\circ X)^\q=\xi\circ X^\q$ and $(\xi\circ X)^\ddp=\xi\circ X^\ddp$. 
Next, suppose $\xi\circ X$ is special. By Propositions~\ref{P:190520} and \ref{P:1}\ref{P:1:i}, the drift at predictable jump times then takes a particularly simple form, namely,
\begin{equation}\label{eq:210612}
 B^{\xi\circ X^\ddp} = \sum_{\tau\in\mathcal{T}_X}  \E_{\tau-}[\xi_\tau(\Delta X_\tau)]\indicator{\lc\tau,\infty \lc}.
\end{equation} 
Observe that this formula is simpler than Proposition~\ref{P:170822.1} applied to $X^\ddp$ in place of $X$. 
Therefore, in practice, Proposition~\ref{P:170822.1} is used with $X=X^\q$ to obtain $B^{\xi\circ X^\q}$. One then has 
\[
	B^{\xi\circ X} =  B^{\xi\circ X^\q} + B^{\xi\circ X^\ddp}. 
\]

Finally, recall that $X^\q$ is quasi-left-continuous, hence $B^{\xi\circ X^\q}$ is continuous, yielding 
$$\Delta B^{\xi\circ X} = \Delta B^{\xi\circ X^\ddp}.$$ 
The literature employs the following weakening of \eqref{eq:210612}, typically with $\xi=\id$,
$$ \Delta B^{\xi\circ X} =  \int_{\Cx^d} \xi(x) \nu^X(\{\cdot\}, \d x);$$
see, for example, \cite[II.2.14]{js.03}.
\qed
\end{remark}

\begin{corollary}[Characteristics of a represented semimartingale]\label{C:170822.1}
Let $Y=Y_0+\xi\circ X$ for some $\xi \in \I^n(X)$. Then the semimartingale characteristics of 
$Y$ with respect to the truncation function $g$ for $Y$ are given  by
\begin{align}
		B^{Y[g]} &= B^{(\xi\circ X)[g]}; \nonumber\\
\bigs[\hat Y^{(k)}, \hat Y^{(l)}\bigs]^c &{}=\left(\hat D {\hat\xi}^{(k)}(0)^\top \hat D {\hat\xi}^{(l)}(0)\right) \cdot \bigs[\hat X,\hat X\bigs]^c	,\quad k,l \in \{1, \cdots, 2n\}; \label{eq:190627} \\
\bigs[\check Y^{(k)}, \check Y^{(l)}\bigs]^c &{}= \left(\check D {\check\xi}^{(k)}(0)^\top \check D {\check\xi}^{(l)}(0)\right) 
\cdot \bigs[\check X,\check X\bigs]^c	,\quad k,l \in \{1, \cdots, 2n\}; \label{eq:200602}
\end{align}
\vspace*{-0.4cm}
\begin{gather}
\label{eq:170813.3}
\begin{aligned}
	&\text{$\nu^Y$ is the push-forward measure of $\nu^X$ under $\xi$, that is, $\psi * \nu^Y = \psi(\xi) * \nu^X$}\\
	&\text{for all }\text{non-negative bounded predictable functions $\psi$ with $\psi(0)=0$.}
\end{aligned}
\end{gather}
\end{corollary}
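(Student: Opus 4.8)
The plan is to verify the three entries of the characteristic triple of $Y$ one at a time, exploiting that $Y-Y_0=\xi\circ X$ is given explicitly by the \'Emery formula \eqref{eq:190610.2} and that $\Delta Y=\xi(\Delta X)$ by Proposition~\ref{P:1}\ref{P:1:i}. The drift is the easiest: adding the constant $Y_0$ alters neither the jump measure nor the predictable finite-variation part, so $Y[g]=Y_0+(\xi\circ X)[g]$ and, since drifts are normalised to start at zero, $B^{Y[g]}=B^{(\xi\circ X)[g]}$; the explicit expression is then precisely Proposition~\ref{P:170822.1}.

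For the continuous quadratic variation I would first isolate the continuous local martingale part of $Y$ from \eqref{eq:190610.2}. The term $\tfrac12\hat D^2\xi(0)\cdot[\hat X,\hat X]^c$ is continuous of finite variation, and the term $(\xi-\indicator{\H_X}\hat D\xi(0)\hat\id)\star\mu^X$ lies in $\V^{\d}_\sigma$ by Proposition~\ref{P:190911} and is therefore purely discontinuous; hence neither contributes to $Y^c(\P)$, and only the stochastic integral survives, so that the $k$--th real component has continuous martingale part $\indicator{\H_X}\hat D\hat\xi^{(k)}(0)\cdot\hat X^c(\P)$. Using $[\hat Y^{(k)},\hat Y^{(l)}]^c=[\hat Y^{(k),c},\hat Y^{(l),c}]$, the bilinearity of the quadratic covariation, and the identity $[RV,RV]=R[V,V]R^\top$ recorded in Subsection~\ref{sect: technicalities}, one obtains \eqref{eq:190627}. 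The indicator $\indicator{\H_X}$ may be freely dropped against $[\hat X,\hat X]^c$ because $X^\ddp=\indicator{\H_X^c}\cdot X\in\V^{\d}_\sigma$ carries no continuous part. The Wirtinger form \eqref{eq:200602} then follows from \eqref{eq:190627} by the same invertible change of coordinates $\check D\xi=\hat D\xi\,\Sigma$, $\check X=\Sigma^{-1}\hat X$ employed in the proof of Proposition~\ref{P:190114.1}, invoking Lemma~\ref{L:200530}.

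For the compensator I would establish the pathwise identity $\psi*\mu^Y=\psi(\xi)*\mu^X$ for non-negative predictable $\psi$ with $\psi(0)=0$. Since $\Delta Y=\xi(\Delta X)$ and $\psi(\xi(0))=\psi(0)=0$, each jump time contributes $\psi(\xi(\Delta X_s))$ to both sides, whereas the sets $\{\Delta X\neq0,\ \xi(\Delta X)=0\}$ and $\{\Delta X=0\}$ contribute zero to each. Taking predictable compensators of these two non-negative functionals and appealing to the uniqueness of the compensator yields $\psi*\nu^Y=\psi(\xi)*\nu^X$, which is exactly \eqref{eq:170813.3}. Because $X\in\Dom(\xi)$ we have $\indicator{\{\xi=\NaN\}}*\mu^X=0$, and compensating gives $\indicator{\{\xi=\NaN\}}*\nu^X=0$, so the right-hand side is well defined.

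The main obstacle is the correct identification of $Y^c(\P)$ in the second step: one must argue carefully that the $\star\mu^X$ term contributes nothing to the continuous martingale part, which is precisely where the $\V^{\d}_\sigma$--membership from Proposition~\ref{P:190911} and the purely discontinuous nature of that class enter, and that $\indicator{\H_X}$ is immaterial in integrals against $[\hat X,\hat X]^c$. The drift and the compensator steps are then routine, the only bookkeeping point being the $\NaN$--set argument noted above.
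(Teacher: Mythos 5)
Your proposal is correct. For the drift and the two continuous-covariation formulae it follows, in expanded form, the same route as the paper: the paper's proof simply says that Definition~\ref{D:181101} yields \eqref{eq:190627} and that \eqref{eq:200602} follows from the definitions \eqref{eq:190704.2} and \eqref{eq:190115.3}; your identification of the continuous local martingale part of $\hat Y$ from the \'Emery formula --- the $[\hat X,\hat X]^c$--integral being continuous of finite variation, the $\star\,\mu^X$ term lying in $\V^{\d}_\sigma$ by Proposition~\ref{P:190911} and hence having no continuous martingale part, and $\indicator{\H_X}$ being immaterial against $[\hat X,\hat X]^c$ --- is precisely what is compressed there, and passing to Wirtinger coordinates via Lemma~\ref{L:200530} is interchangeable with the paper's use of the conjugation identity. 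Where you genuinely diverge is \eqref{eq:170813.3}: the paper deduces it from the drift formula of Proposition~\ref{P:170822.1}, observing that $\nu^Y(G)=B^{\indicator{G}\circ(\xi\circ X)}$ for $G=G_1\times G_2$ with $G_1$ predictable and $G_2$ closed and bounded away from zero, i.e., it stays inside the calculus and reuses the composition/truncation machinery; you instead prove the pathwise identity $\psi*\mu^Y=\psi(\xi)*\mu^X$ from $\Delta Y=\xi(\Delta X)$ and pass to compensators. Your route is more elementary and self-contained, while the paper's shows the statement falling out of the drift formula already established. One refinement would make your compensator step airtight: for bounded $\psi$ with $\psi(0)=0$ the process $\psi*\mu^Y$ need not be locally integrable (take $\psi=\indicator{\id\neq 0}$ and a process with infinitely many jumps on a compact interval), so uniqueness of Doob--Meyer compensators of increasing processes does not apply directly; instead, note that the push-forward of $\nu^X$ under $\xi$ is a predictable random measure satisfying $\E[\psi*\mu^Y_\infty]=\E[\psi(\xi)*\nu^X_\infty]$ for every non-negative predictable $\psi$ vanishing at zero, and hence coincides with $\nu^Y$ by the uniqueness part of the compensator characterization in \cite[II.1.8]{js.03}. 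Your $\NaN$--set observation, $\indicator{\{\xi=\NaN\}}*\nu^X=0$, is exactly what is needed for this push-forward to be well defined.
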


\begin{proof}
Definition~\ref{D:181101} yields \eqref{eq:190627} and \eqref{eq:200602} then follows from \eqref{eq:190704.2} and \eqref{eq:190115.3} in view of the identity 
$$\bigs(\check D \xi^{(k)}(0) \cdot\check X\bigs)^*=\bigs(\check D \xi^{(k)}(0)\bigs)^* \cdot\bigs(\check X\bigs)^*
=\check D \xi^{*(k)}(0) \cdot\check X
	,\qquad k \in \{1, \cdots, n\},$$ 
where the superscript $*$ denotes again the complex conjugate. The statement in \eqref{eq:170813.3} follows from Proposition~\ref{P:170822.1} on observing that $\nu^Y(G) = B^{\indicator{G} \circ (\xi\circ X)}$, where $G = G_1 \times G_2$ with $G_1 \subset [0,\infty)$ predictable and $G_2$ a closed set in $\Cx^n$ not containing a neighbourhood of zero.
\end{proof}
When $\xi$ is of the form $\xi = f(X_- + \id) - f(X_-)$ for a twice continuously differentiable real-valued function $f$ and when $X$ is real, then Corollary~\ref{C:170822.1} reduces to the situation in Goll and Kallsen~\cite[Corollary~A.6]{Goll:Kallsen}. When $\xi=R\,\id$ for some $\R^{n\times d}$--valued matrix $R$ and $X$ is real-valued,  Corollary~\ref{C:170822.1} yields the statement of Eberlein, Papapantoleon, and Shiryaev~\cite[Proposition~2.4]{eberlein.al.09}.

\begin{example}[Generalized Yor formula continued]
We continue the discussion of Proposition~\ref{P:190701b}. Consider $\alpha, \beta \in \Cx$ and a $\Cx^2$--valued semimartingale $X$ satisfying the assumptions of Proposition~\ref{P:190701b}\ref{P:190701b.1} and additionally $X$ is stopped when $\Delta X^{(1)}= -1$ or $\Delta X^{(2)}= -1$.  We are interested in the drift of 
\[
	Y = \Log\left(\Exp \bigs(X^{(1)}\bigs)^\alpha\Exp \bigs(X^{(2)}\bigs)^\beta\right) = ((1+\id_1)^\alpha (1+\id_2)^\beta -1) \circ X,
\]
see \eqref{eq:190701.5}. Here $(1+\id_1)^\alpha (1+\id_2)^\beta -1$ belongs to $\Uni$ and is analytic at zero. Let  $g$ and $h$ denote truncation functions for $Y$ and $X$, respectively. Thanks to Lemma~\ref{L:compatible h} and \eqref{eq:190704.4} we now have
\begin{equation}\label{eq:200601}
\begin{split}
	B^{Y[g]} ={}& \alpha B^{X[h]^{(1)}} + \beta B^{X[h]^{(2)}}+ \frac{1}{2} \alpha (\alpha - 1) \bigs[X^{(1)}, X^{(1)}\bigs]^c
	+ \frac{1}{2}  \beta (\beta - 1) \bigs[X^{(2)}, X^{(2)}\bigs]^c \\
	&{}+ \alpha \beta \bigs[X^{(1)}, X^{(2)}\bigs]^c
		+ \left(g \left((1+\id_1)^\alpha (1+\id_2)^\beta -1\right) - [\alpha\,\, \beta] h\right) * \nu^X.
\end{split}
\end{equation}
Moreover, Corollary~\ref{C:170822.1} yields
\begin{align*}
	[Y,Y]^c = \alpha^2 \bigs[X^{(1)}, X^{(1)}\bigs]^c + \beta^2 \bigs[X^{(2)}, X^{(2)}\bigs]^c 
	          + 2 \alpha \beta \bigs[X^{(1)}, X^{(2)}\bigs]^c.
\end{align*}
For a direct derivation of \eqref{eq:200601} in the real-valued case when $\alpha = 1$ and $\beta = -1$, see for example Kallsen~\cite[Lemma~4.3]{Kallsen:2000}. \qed
\end{example}

\begin{example}[Example~\ref{E:190302} continued] \label{E:200609}
Consider for some $\alpha\in\Cx$ and $\Cx$--valued $X$ with $\Delta X\neq-1$, the representation
$Y = (\abs{1+\id}^\alpha-1)\circ X$. Assume for simplicity that $Y$ is special. The function $\abs{1+\id}^\alpha-1$ is in $\Uni$ but not analytic at 0. Example~\ref{E:190302}, Lemma~\ref{L:compatible h}, and \eqref{eq:190317.1} now yield
\[
B^Y = \alpha\cdot B^{\Re X[h]}+\frac{\alpha}{2}(\alpha-1)[\Re X,\Re X]^c+\frac{\alpha}{2}[\Im X,\Im X]^c
+ \left(\abs{1+\id}^\alpha-1-\alpha\Re h\right)*\nu^X 
\]
for any truncation function $h$ for $X$.\qed
\end{example}

\section{Concluding remarks}\label{S:6}
Let us review the benefits of the proposed `calculus of predictable variations.' Some of the advantages, such as universality of representations in $\Uni$ and the ease with which calculations can be performed in a very general class of complex-valued functions, have been showcased in the introduction and subsequently in the main body of the paper. Here we want to mention several other benefits that are of a more philosophical kind or whose detailed treatment is beyond the scope of this paper and will be pursued in other work. 

The literature has a number of fragmented and specialized results that fit into the framework of semimartingale representations. On their own, these results are hard to generalize and do not suggest fruitful unification, hence are also difficult to recall and disseminate. The new calculus overcomes this barrier by providing a compact, systematic way of recording existing (and new) results. Let us mention two classical examples to illustrate these advantages.
\begin{itemize}
\item Recall that a $\Cx$--valued continuous local martingale is called conformal if $[X,X]^c=0$. Hence by \eqref{eq:190703.1}, an analytic representation with respect to a continuous conformal local martingale is again a conformal local martingale. This not only covers a change of variables by means of an analytic function, as in Getoor and Sharpe~\cite[Proposition~5.4]{getoor.sharpe.72}, but includes \emph{arbitrary} representation analytic at the origin. For example, the stochastic logarithm of a natural exponential preserves continuous conformal local martingales as its representing function $\e^{\id}-1$ is analytic at 0.
\item Consider now the explicit characterization of the complex stochastic exponential due to Dol\'eans-Dade~\cite[Th\'eor\`eme~1]{doleans-dade.70}. This is captured by the representation \eqref{eq:190128.2},
$$\Exp(X) = \e^{\log(1+\id)\circ X},\qquad \text{provided $\Delta X\neq -1$}.$$
As $\log(1+\id)$ is in $\Uni$ and analytic at 0, the \'Emery formula~\eqref{eq:190703.1} yields
$$ \log(1+\id)\circ X = X-X_0 - \frac{1}{2}[X,X]^c + (\log(1+\id)-\id)*\mu^X, 	$$
hence the jump integral converges pathwise, $\P$--almost surely. After exponentiation this yields the aforementioned important formula
$$ \Exp(X) = \e^{X-X_0 - \frac{1}{2}[X,X]^c}\prod_{t\leq\cdot} \e^{-\Delta X_t}(1+\Delta X_t),$$
this time in full generality, because the jump to zero may be treated separately. 
\end{itemize}

Further advantages of the new calculus emerge when one is tasked with computing the drift of a represented process under some new probability measure $\Qu$ whose density $Z$ with respect to $\P$ is also represented, say by $\Log(Z)=\psi\circ X$. It now suffices to observe that by Girsanov's theorem the $\Qu$--drift of $X$ equals the  $\P$--drift of $X+[X,\Log(Z)]=\id(1+\psi)\circ X$. 
We refer the reader to \v{C}ern\'{y} and Ruf \cite{crIII} for a detailed treatment of measure changes by means of non-negative, represented, multiplicatively compensated semimartingales and once again to \cite{crI} for specific applications.

The suggested calculus has one other benefit for applied stochastic modelling. In an applied setting it is impractical to work with the raw characteristics
$$\left(B^{X[h]},[\hat X,\hat X]^c,\nu^{X}\right).$$ 
This issue can be addressed by decomposing the process $X$ uniquely into a `discrete-time' component $X^\ddp$ involving only jumps at predictable times and a `continuous-time' part $X^\q$, see Proposition~\ref{P:190729}. When it comes to computing drifts, the jumps at predictable times $\tau$ can be treated separately via the natural formula
$$ \Delta B_\tau^{\xi\circ X} = \E_{\tau_-}[\xi_\tau(\Delta X_\tau)].$$

The remaining quasi-left-continuous part $X^\q$ is usually an It\^o semimartingale in applications, i.e., the characteristics of $X^\q$ are assumed to be absolutely continuous with respect to time. One may then rephrase the drift computation for this component in terms of time rates, reverting to drift rates, quadratic variation rates (squared volatilities), and jump intensities (L\'evy measures). Thus, the calculus naturally accommodates the two most common ways of specifying the underlying stochastic process $X$ (discrete time vs.~an It\^o semimartingale) and even allows them to be combined in intricate ways, see \cite[Example~4.5]{crIII}. 

We shall close by mentioning possible directions for future research. As for extensions of the classes $\Uni$ and $\I(X)$, the most immediate generalization concerns the level of smoothness of the representing function at the origin. Lack of differentiability is associated with the need to consider local times in the It\^o--Meyer formula; see Karatzas and Shreve~\cite[Theorem~3.6.22]{KS1}. This suggests an appropriate modification of the \'Emery formula~\eqref{eq:190610.2}, for which the three key operations would have to be checked again. In Example~\ref{Ex:IWF}, we have broached the subject of the It\^o--Wentzell formula  that we believe merits further investigation. 

\def\MR#1{\href{http://www.ams.org/mathscinet-getitem?mr=#1}{MR#1}}
\def\ARXIV#1{\href{https://arxiv.org/abs/#1}{arXiv:#1}}
\def\DOI#1{\href{https://doi.org/#1}{doi:#1}}
\medskip

\noindent\textbf{Acknowledgements.}
We thank Jan Kallsen, Christoph K\"uhn, Johannes Muhle-Karbe, Pietro Siorpaes, two anonymous referees, and an associate editor for helpful comments and suggestions.

\end{document}